\newtheorem{theorem}{Theorem}
\newtheorem{lemma}{Lemma}
\newtheorem{defn}{Definition}
\def\face#1{\hbox{$\bigcirc$\llap{\lower 2pt\hbox{\"{}\rlap{\lower 3pt
\hbox{\kern -2.5pt$#1{}$}}}\kern 2.5pt}}}
\def\happy{\face\breve}
\def\tongue{\hbox{$\bigcirc$\llap{\lower 2pt\hbox{\"{}\rlap{\lower 1pt
\hbox{\kern -4.5pt${}^{{}_\sigma}$}}}\kern 2.5pt}}}
\newenvironment{proof}{\par{\em Proof\/.}}{$\happy$\medskip\par}
\newenvironment{ex}{\noindent{\bf Example}:}{\smallskip}
\title{How to encode a tree}
\author{Sally Picciotto}
\begin{document}

\degreeyear{1999}
\degree{Doctor of Philosophy}
\chair{Professor Peter Doyle}
\othermembers{Professor Adriano Garsia\\
Professor Mark Haiman\\
Professor Audrey Terras\\
Professor Mihir Bellare\\
Professor Ramamohan Paturi}
\numberofmembers{6}
\prevdegrees{B.S. (Yale University) 1993\\
M.A. (University of California, San Diego), 1995}
\field{Mathematics}
\campus{San Diego}

\begin{frontmatter}
\maketitle
\copyrightpage
\approvalpage
\begin{dedication}
\setcounter{page}{4}
\null\vfil
{\large
\begin{center}
To\\
Ryan Garibaldi\\
Jean Isaacs\\
and Dalit Baum\\
\vspace{8pt}
who have witnessed (and greatly assisted in) \\
my growth over the last few years.
\end{center}
}
\vspace{2in}
\hfill\parbox[t]{3in}{
\begin{flushright}
\emph{``I'm afraid you misunderstood...  I said I'd like a mango.''--G. Larsen}
\end{flushright}
}
\vfil\null
\end{dedication}

\tableofcontents
\begin{acknowledgements}
It is probably impossible to thank everyone to whom I am grateful, but I'm
going to give it my best shot.  First of all, I thank Peter Doyle for his
ability to spark my interest in the questions I explored in my research and
also, just as importantly, for his patience.  His enthusiasm is contagious and 
his nontraditional teaching methods inspire me.  With another advisor, I might
never have completed this degree.

I am indebted to the other faculty members on my committee as well.
Professor Garsia pointed me to important papers and offered valuable 
insights at several points in my research, and his courses have helped me to
understand the beauty of both bijective and ``manipulatoric'' proofs. 
His Involution Principle was obviously an important foundation for my 
research, without which my own results would not have been possible.
Professor Haiman's ``applied algebra'' qualifying class initially
introduced me to combinatorics, and he suggested 
ways that my research might relate to questions he and other
combinatorists investigate.   Professor Terras has been an unflagging 
source of encouragement, inspiration, and comfort.  Professor Bellare was
my supervisor for one of my favorite TAing assignments at UCSD and also
suggested future directions for my career. 

In addition to my committee, courses and papers from (and discussions with)
Jeff Remmel, as well as
discussions with Glenn Tesler, Jeb Willenbring, and Mike Zabrocki,
aided in my understanding of the subject.  Mike Zabrocki 
actually read my first rough draft of the thesis 
and offered dozens of helpful suggestions.
Furthermore, the Math Department staff was of great assistance on numerous
occasions, so I thank Lois Stewart, Joe Keefe, Wilson Cheung,
Judy Gregg, Zee Collins, Lee Monta\~no,
and Mike Stegen, plus Nieves Rankin and Kathy Johns from CSE.  Professors
Harold Stark and Ruth Williams have both helped me to develop my teaching
abilities by visiting my classes and making suggestions.

Beyond mathematics, 
there are many people who have vastly improved my quality of life
during graduate school.  I wish to express my deep appreciation.

\end{acknowledgements}
\begin{vitapage}
\addcontentsline{toc}{chapter}{\protect\numberline{}Vita}
\begin{vita}
\item [January 12, 1972] Born, San Francisco, California
\item [1993] B.\ S., \emph{cum laude},  
Yale University
\item [1993--1998] Teaching assistant, Department of Mathematics, 
University of California San Diego
\item [1995] M.\ A., University of California San Diego
\item [1997] Lecturer, Department of Mathematics, 
University of California San Diego
\item [1998]  Choreographed ``Near The Sand Comets'' for ``New Works'' dance
concert, University of California San Diego
\item [1998--1999] Teaching assistant, Department of Computer Science and 
Engineering, University of California San Diego
\item [1999] Ph.\ D., University of California San Diego 
\end{vita}
\end{vitapage}

\begin{abstract}
We construct bijections giving three ``codes'' for trees.  These codes 
follow naturally from the Matrix Tree Theorem of Tutte and have many 
advantages over the one produced by Pr\"ufer in 1918.  
One algorithm gives explicitly a bijection that is
implicit in Orlin's manipulatorial proof of Cayley's formula (the formula
was actually found first by Borchardt).
Another is based on a proof of Knuth.  The third is an implementation of
Joyal's pseudo-bijective proof of the formula, and is equivalent to one
previously found by E{\u{g}}ecio{\u{g}}lu and Remmel.  In each case, we have at
least two algorithms, one of
which involves hands-on manipulations of the tree while the other
involves a combinatorial and linear algebraic manipulation of a matrix.
\end{abstract}

\end{frontmatter}


\chapter{Introduction}

This dissertation is a contribution to the history of progressing 
from algebraic proofs to bijective proofs.  In particular, for theorems
involving graphs, there is a long history of proofs using matrices.
We
start with linear algebra, but automatically something is going
on beneath the surface that turns out to be a simple bijection.  

\section{Definitions}

\begin{defn}
A {\em directed graph} is a quadruple $G=(V,E,\alpha,\omega)$, 
where the elements of the set $V$ are
called {\em vertices} and the elements of the set $E$ are called {\em edges},
and $\alpha$ and $\omega$ are the boundary maps from $E$ to $V$. If $e\in E$,
then $\alpha(e)\in V$ is the {\em initial vertex} or {\em tail} of $e$ and
$\omega(e)\in V$ is the {\em terminal vertex} or {\em head} of $e$.
\end{defn}
Note that this definition allows for multiple edges with tail $v_1$ and
head $v_2$.
An edge in a directed graph can be represented by an arrow pointing from 
the initial vertex to the terminal vertex.

\begin{ex}

\begin{picture}(100,90)
\put(25,75){2}
\put(32,80){\vector(1,0){40}}
\put(32,76){\vector(1,0){40}}
\put(27,73){\vector(0,-1){15}}
\put(30,73){\vector(0,-1){15}}
\put(25,58){\vector(0,1){15}}
\put(25,50){4}
\put(25,50){\vector(-1,-1){20}}
\put(0,25){1}
\put(6,27){\vector(1,0){35}}
\put(45,25){3}
\put(44,34){\vector(-1,1){15}}
\put(44,30){\vector(-1,0){35}}
\put(75,0){5}
\put(81,2){\vector(1,0){20}}
\put(100,0){6}
\put(99,6){\vector(-1,0){20}}
\put(75,25){8}
\put(78,24){\vector(0,-1){15}}
\put(50,50){7}
\put(53,49){\vector(1,-1){19}}
\put(55,52){\vector(1,-1){19}}
\put(56,54){\vector(1,0){40}}
\put(100,50){9}
\put(105,50){$\hookleftarrow$}
\put(109,55.5){\line(1,0){7}}
\put(99,49){\vector(-1,-1){19}}
\put(102,49){\vector(0,-1){40}}
\put(105,25){\small{e}}
\put(100,50){\vector(-1,0){40}}
\put(75,75){0}
\put(80,75){$\hookleftarrow$}
\put(84,80.5){\line(1,0){7}}
\end{picture}

\noindent
Here the vertices are labelled with integers.
The edge labelled e satisfies $\alpha({\rm e})=9$ and $\omega({\rm e})=6$.
\end{ex}
An edge is said
to {\em point from} or {\em out of} 
its tail and {\em point to} or {\em into} its head.
This dissertation deals with directed graphs
whose vertices are labelled $0, 1, \dots, n$.  Sometimes the edges have
weights associated to them.  Sometimes we refer to a directed graph as
simply a graph.
\begin{defn}
A function $W:E\to S$, where $S$ is any set, defines a 
{\em weight} for each edge.
\end{defn}
\begin{defn}
The {\em indegree} of a vertex in a directed graph is the number of 
edges of which the vertex is the head, and the {\em outdegree} is the
number of edges of which the vertex is the tail.
\end{defn}
\begin{defn}
A {\em path} in a directed graph is an alternating sequence of vertices 
and edges
$v_1, e_1, v_2, e_2,\dots,e_4, v_{r+1}$ where $v_i$ is the tail of the
edge $e_i$ and $v_{i+1}$ is the head of the edge $e_i$.
\end{defn}
\begin{defn}
A {\em cycle} in a directed graph is a closed path (a path where 
$v_1=v_{r+1}$).  A cycle with only one edge, $v\to v$, is called a {\em loop}. 
\end{defn}
\begin{defn}
The {\em complete digraph} (with a given number of vertices) is a directed
graph with exactly one edge $v_1\to v_2$ for each pair of vertices.  
\end{defn}
\begin{defn}
A {\em rooted tree} is a digraph with a unique path connecting each vertex
to the (unique) vertex with outdegree 0 called the {\em root}.
Any vertex whose indegree is $0$ is called a {\em leaf}.
\end{defn}
Unless otherwise noted, all trees are rooted at 0.  Any ``free tree'' (an
undirected connected graph with no cycles) can be transformed uniquely into 
a tree rooted at 0 by directing all edges toward 0.

\begin{ex}

\begin{picture}(100,110)
\put(50,0){0}
\put(50,25){4}
\put(53,23){\vector(0,-1){15}}
\put(25,50){5}
\put(28,48){\vector(1,-1){20}}
\put(50,50){2}
\put(53,48){\vector(0,-1){15}}
\put(75,50){7}
\put(75,48){\vector(-1,-1){19}}
\put(25,75){6}
\put(75,75){3}
\put(28,73){\vector(1,-1){20}}
\put(75,73){\vector(-1,-1){19}}
\put(25,100){1}
\put(28,98){\vector(0,-1){15}}
\end{picture}
\end{ex}

\noindent
In this tree, the leaves are 1, 3, 5, and 7.
\begin{defn}
The {\em weight} of a tree is the product of the weights of its edges.
\end{defn}
In the example above, 
if the weight of the edge $i\to j$ is $a_{ij}$ then the
weight of the tree is $a_{16} a_{24} a_{32} a_{40} a_{54} a_{62} a_{74}$.
\begin{defn}
A {\em spanning tree} of a graph $G$ is a tree whose vertices are the same 
as the vertices of $G$ and whose edges are a subset of the edges of $G$.
\end{defn}
\begin{defn}
A {\em functional digraph} 
is a directed graph where 
each vertex is the tail of exactly one edge. 
\end{defn}
In a functional digraph, 
there may be many edges pointing into a vertex but only one pointing out.
A functional digraph is a collection of disjoint cycles whose vertices are
roots of trees leading
into them.  

\begin{ex}  This is a functional digraph:

\begin{picture}(100,90)
\put(25,75){2}
\put(27,73){\vector(0,-1){15}}
\put(25,50){4}
\put(25,50){\vector(-1,-1){20}}
\put(0,25){1}
\put(6,28){\vector(1,0){35}}
\put(45,25){3}
\put(44,34){\vector(-1,1){15}}
\put(75,0){5}
\put(81,2){\vector(1,0){20}}
\put(100,0){6}
\put(99,6){\vector(-1,0){20}}
\put(75,25){8}
\put(78,24){\vector(0,-1){15}}
\put(50,50){7}
\put(55,49){\vector(1,-1){19}}
\put(100,50){9}
\put(99,49){\vector(-1,-1){19}}
\put(75,75){0}
\put(80,75){$\hookleftarrow$}
\put(84,80.5){\line(1,0){7}}
\end{picture}
\end{ex}

\noindent
A functional digraph
represents a function $f:\{0,1,2,\dots,n\}\to\{0,1,2,\dots,n\}$, 
where $f(i)=j$ if and only if the edge $i\to j$ is in the digraph.
\begin{defn}
Since each vertex $i$ ($\neq 0$ in a rooted tree) in a functional digraph
is the initial vertex for exactly one
edge, it makes sense to define $\verb+succ+(i)=j$ to be the terminal vertex
of the edge $i\to j$ in the (tree or) functional digraph.
\end{defn}
In the tree example above, $\verb+succ+(6)=2$; in the functional digraph,
$\verb+succ+(6)=5$.
\begin{defn}
A {\em happy functional digraph} is a functional digraph without an 
edge out of $0$, and in which $1$ is in the same connected component as $0$.  
\end{defn}
A happy functional digraph is a collection of trees leading into
disjoint cycles, together
with a tree rooted at 0 and also containing 1.
\begin{defn}
An {\em ascent} is an edge $i\to j$ where $j>i$.
\end{defn}
\begin{defn}
An {\em Escher cycle} is a cycle in which each edge except one is an
ascent.
\end{defn}
\begin{ex}

\begin{picture}(100,90)
\put(25,75){9}
\put(30,78){\vector(1,0){20}}
\put(50,75){2}
\put(55,75){\vector(1,-1){20}}
\put(75,50){5}
\put(75,50){\vector(-1,-1){20}}
\put(50,25){6}
\put(50,28){\vector(-1,0){20}}
\put(25,25){7}
\put(25,30){\vector(-1,1){20}}
\put(0,50){8}
\put(5,58){\vector(1,1){19}}
\end{picture}
\end{ex}

\vspace{-15pt}
\noindent
Note that each vertex is smaller than its successor, except
for the greatest vertex in the cycle, 9.
\begin{defn}
The ``na\"\i ve code'' for a tree is defined to be 
\[
{\rm na\ddot{\i} ve}=(\verb+succ+(1),\verb+succ+(2),\dots,\verb+succ+(n)).
\]
\end{defn}
The ``na\"\i ve code'' requires no work to find, but not every $n$-tuple
corresponds to a tree.  For example, the na\"\i ve code (3,2,0,5,4) 
would correspond to the following graph:

\begin{picture}(50,75)
\put(0,50){1}
\put(3,49){\vector(0,-1){15}}
\put(0,25){3}
\put(3,24){\vector(0,-1){15}}
\put(0,0){0}
\put(25,12){2}
\put(30,12){$\hookleftarrow$}
\put(34,17.5){\line(1,0){7}}
\put(25,50){4}
\put(30,50){\vector(1,0){18}}
\put(50,50){5}
\put(49,55){\vector(-1,0){18}}
\end{picture}

\noindent
This graph is not a tree because it has a loop and a cycle.  It is, however,
a happy functional digraph.

We borrow the notation of discrete geometry for some of the proofs in
this paper:
\begin{defn}
A {\em{signed set}} $S=S^+\sqcup S^-$, 
where $\sqcup$ represents the disjoint union, 
is an oriented 
zero-dimensional complex (that is, a collection of distinguishable points
that can be partitioned into two subsets,
one containing the elements considered ``positive'' and the other containing
the elements considered ``negative.'').  
\end{defn}
\begin{defn}
Let $T=T^+\sqcup T^-$ and $S=S^+\sqcup S^-$ be two signed sets.  
Their {\em difference} is defined to be the disjoint union of the
sets, with the following signs on elements of the union:  
\[(S-T)^+=S^+\sqcup T^- {\text{ and  }}
(S-T)^-=S^-\sqcup T^+.
\]
\end{defn}
\begin{ex}
If $S=\{a,b,c,-d,-e\}$ and $T=\{x,-y,-z\}$, then 
\[
S-T=\{a,b,c,y,z,-d,-e,-x\}.
\]
\end{ex}
\vspace{-30pt}
\begin{defn}
The {\em Kronecker delta} function $\delta_{xy}$ takes value 1 if $x=y$
and 0 otherwise.
\end{defn}
\begin{defn}
An {\em involution} $\phi:S\to S$ is a map on a signed set $S$ that
satisfies $\phi\circ\phi(x)=x$ for all $x\in S$.
\end{defn}
\begin{defn}
An involution is {\em sign-reversing} if for any $x\in S^+$, $\phi(x)\in S^-$
and for any $x\in S^-$, $\phi(x)\in S^+$.
\end{defn}
A sign-reversing involution does not have any fixed points.

\section{Some History}
In 1860,
Borchardt \cite{B} discovered through evaluation of a certain determinant
(namely, the principal (0,0)-minor of the matrix Tutte used a hundred years
later, see \S\ref{toggling}) that the number of labelled trees is
$\left(n+1\right)^{n-1}$.  Cayley \cite{Ca} independently derived 
this formula in 1889, and his short paper on the topic alludes to a 
bijection.  However, the invention of a coding algorithm
for trees, by Pr\"ufer in 1918, was the first combinatorial proof that this
is the formula for the number of trees.
His idea was that any tree can 
be encoded by a vector:  an ordered $(n-1)$-tuple
of labels chosen from $0$ to $n$.  This is done in 
such a way that the tree can be recovered from the code and vice versa.  
The number of possible codes (which is of course equal to the number of
possible trees) is $(n+1)^{n-1}$. 
\section{The Pr\"ufer Code}
In 1918, Pr\"ufer \cite{P} gave the following bijective proof 
of this formula.  

Given a labelled tree, we suppose that the least leaf is labelled 
$i_1$, and that $\verb+succ+(i_1)=j_1$.  Remove $i_1$ and its edge
from the tree, 
and let $i_2$ be the least leaf on the new tree, with $\verb+succ+(i_2)=j_2$.
If we repeat this process until there are only two vertices left, the 
Pr\"ufer code $(j_1,\dots,j_{n-1})$ uniquely determines the tree.

To recover the tree from any $(n-1)$-tuple, we note that for each vertex 
except 
the root, the number of occurrences of that label in the Pr\"ufer code
is equal to the indegree of that vertex.  The number of occurrences of
$0$ in the code is one less than the indegree of 0.  There must be at least two
labels that don't appear in the code, since there are $n+1$
vertices and only $n-1$ entries in the code.  Any nonzero vertex not occurring
in the code is a leaf in the original tree, so we know that the least
one, $i_1$, has $\verb+succ+(i_1)=j_1$, the first vertex in the code.  
We can also tell whether any new leaves were formed when $i_1$ was removed
because we know the indegree of $j_1$.  Step by step, from beginning to end,
we can reconstruct each edge of the tree.

Hence, the Pr\"ufer code gives a bijection between trees with $n+1$ vertices
and $(n-1)$-tuples of the vertex-labels.  Since the number of $(n-1)$-tuples is
clearly $(n+1)^{n-1}$, this bijection proves the formula that Borchardt
discovered.

However, the algorithm is a bit unnatural.  
The inverse does not undo the steps in
the backwards order; we have to look at the overall code and decipher what
had to be true in the tree by starting from the  beginning of the code and
working our way to the end.

\subsection{An example}\label{proofer}
Consider the tree: 

\vspace{15pt}
\begin{picture}(100,100)
\put(50,0){0}
\put(50,25){4}
\put(53,23){\vector(0,-1){15}}
\put(25,50){5}
\put(28,48){\vector(1,-1){20}}
\put(50,50){2}
\put(53,48){\vector(0,-1){15}}
\put(75,50){7}
\put(75,48){\vector(-1,-1){19}}
\put(25,75){6}
\put(75,75){3}
\put(28,73){\vector(1,-1){20}}
\put(75,73){\vector(-1,-1){19}}
\put(25,100){1}
\put(28,98){\vector(0,-1){15}}
\end{picture}

\noindent
with leaves $\{1,5,3,7\}$.  
Step by step, we build up the code and remove leaves from the tree. 
First, we see that $1$ is the least leaf, so we write down
$\verb+succ+(1)$ and remove $1$ from the tree.  

\begin{picture}(150,100)
\put(50,0){0}
\put(50,25){4}
\put(53,23){\vector(0,-1){15}}
\put(25,50){5}
\put(28,48){\vector(1,-1){20}}
\put(50,50){2}
\put(53,48){\vector(0,-1){15}}
\put(75,50){7}
\put(75,48){\vector(-1,-1){19}}
\put(25,75){6}
\put(75,75){3}
\put(28,73){\vector(1,-1){20}}
\put(75,73){\vector(-1,-1){19}}
\put(100,50){Code so far=(6)}
\end{picture}

\noindent
Here, the removal of $1$ created a new leaf.  Now the leaves are 
$\{5,6,3,7\}$, so the new least leaf is $3$.

\begin{picture}(150,100)
\put(50,0){0}
\put(50,25){4}
\put(53,23){\vector(0,-1){15}}
\put(25,50){5}
\put(28,48){\vector(1,-1){20}}
\put(50,50){2}
\put(53,48){\vector(0,-1){15}}
\put(75,50){7}
\put(75,48){\vector(-1,-1){19}}
\put(25,75){6}
\put(28,73){\vector(1,-1){20}}
\put(100,50){Code so far=(6,2)}
\end{picture}

\noindent
The next leaf to fall off of the tree is $5$, leaving us with the following
tree and code:

\begin{picture}(150,100)
\put(50,0){0}
\put(50,25){4}
\put(53,23){\vector(0,-1){15}}
\put(50,50){2}
\put(53,48){\vector(0,-1){15}}
\put(75,50){7}
\put(75,48){\vector(-1,-1){19}}
\put(25,75){6}
\put(28,73){\vector(1,-1){20}}
\put(100,50){Code so far=(6,2,4)}
\end{picture}

\noindent
No new leaves have been created, so the smallest leaf now is $6$ and we
remove it.

\begin{picture}(150,75)
\put(50,0){0}
\put(50,25){4}
\put(53,23){\vector(0,-1){15}}
\put(50,50){2}
\put(53,48){\vector(0,-1){15}}
\put(75,50){7}
\put(75,48){\vector(-1,-1){19}}
\put(100,25){Code so far=(6,2,4,2)}
\end{picture}

\noindent
Now that we've removed both $3$ and $6$, the
indegree of $2$ is $0$. We remove $2$ to obtain:

\begin{picture}(150,75)
\put(50,0){0}
\put(50,25){4}
\put(53,23){\vector(0,-1){15}}
\put(75,50){7}
\put(75,48){\vector(-1,-1){19}}
\put(100,25){Code so far=(6,2,4,2,4),}
\end{picture}

\noindent
and finally:

\begin{picture}(150,50)
\put(50,0){0}
\put(50,25){4}
\put(53,23){\vector(0,-1){15}}
\put(100,15){Pr\"ufer Code=(6,2,4,2,4,4).}
\end{picture}

\subsection{Finding the tree for a code}
To get the other direction of the bijection, we start by counting 
occurrences of each vertex label in the code
to find the list of indegrees.  
(The indegree of $0$ is one greater than the number
of occurrences of $0$ in the code.)  For the code (6,2,4,2,4,4) we have

\begin{center}
\begin{tabular}{c|c}
Vertex & Indegree\\
\hline
0 & 1\\
1 & 0\\
2 & 2\\
3 & 0\\
4 & 3\\
5 & 0\\
6 & 1\\
7 & 0
\end{tabular}
\end{center}

\noindent
The four vertices with indegree of 0 are the leaves on the original tree.
So far, our knowledge consists of this:

\begin{picture}(200,100)
\put(0,0){0}
\put(3,25){\vector(0,-1){15}}
\put(25,25){2}
\put(3,48){\vector(1,-1){20}}
\put(52,48){\vector(-1,-1){20}}
\put(75,50){4}
\put(50,73){\vector(1,-1){20}}
\put(102,73){\vector(-1,-1){20}}
\put(78,75){\vector(0,-1){15}}
\put(97,25){6}
\put(100,50){\vector(0,-1){15}}
\put(110,50){$L_1 = \{1,3,5,7\}$}
\put(110,75){$P_1 = (6,2,4,2,4,4)$}
\end{picture}

\vspace{5pt}
\noindent where $P_i$ is the code at step $i$ and 
$L_i$ is the Leaf Set at step $i$.  
The Leaf Set $L_i$ consists of all the vertices whose labels
are not listed in $P_i$ and whose outgoing edges have yet to be determined.
It is actually the set of vertices
that are leaves after all of the previous ``least leaves''  
have been removed.

Since the smallest leaf in this example is 1, and we know the code starts with 6, we can
see that the edge whose head is 6 must have tail 1.  We also see that removing
1 from the tree created a new leaf, 6, so we add 6 to the Leaf Set.

\begin{picture}(200,100)
\put(0,0){0}
\put(3,25){\vector(0,-1){15}}
\put(25,25){2}
\put(3,48){\vector(1,-1){20}}
\put(52,48){\vector(-1,-1){20}}
\put(75,50){4}
\put(50,73){\vector(1,-1){20}}
\put(102,73){\vector(-1,-1){20}}
\put(78,75){\vector(0,-1){15}}
\put(97,0){6}
\put(100,23){\vector(0,-1){15}}
\put(97,25){1}
\put(110,50){$L_2 = \{3,5,6,7\}$}
\put(110,75){$P_2 = (2,4,2,4,4)$}
\end{picture}

\vspace{5pt}
\noindent The least leaf of $L_2$ is 3, so its edge points
to 2, the first element in $P_2$.  
Removing 3 does not create a new leaf because 2 appears twice in $P_2$.

\begin{picture}(200,100)
\put(0,0){0}
\put(3,25){\vector(0,-1){15}}
\put(25,25){2}
\put(3,48){\vector(1,-1){20}}
\put(0,50){3}
\put(52,48){\vector(-1,-1){20}}
\put(75,50){4}
\put(50,73){\vector(1,-1){20}}
\put(102,73){\vector(-1,-1){20}}
\put(78,75){\vector(0,-1){15}}
\put(97,0){6}
\put(100,23){\vector(0,-1){15}}
\put(97,25){1}
\put(110,50){$L_3 = \{5,6,7\}$}
\put(110,75){$P_3 = (4,2,4,4)$}
\end{picture}

\vspace{5pt}
\noindent  The least leaf of $L_3$ is 5.  It will point
at 4, and removing 5 will not create a new leaf.

\begin{picture}(200,100)
\put(0,0){0}
\put(3,25){\vector(0,-1){15}}
\put(25,25){2}
\put(3,48){\vector(1,-1){20}}
\put(0,50){3}
\put(52,48){\vector(-1,-1){20}}
\put(75,50){4}
\put(52,73){\vector(1,-1){20}}
\put(50,75){5}
\put(102,73){\vector(-1,-1){20}}
\put(78,75){\vector(0,-1){15}}
\put(97,0){6}
\put(100,23){\vector(0,-1){15}}
\put(97,25){1}
\put(110,50){$L_4 = \{6,7\}$}
\put(110,75){$P_4 = (2,4,4)$}
\end{picture}

\vspace{5pt}
\noindent
Once we remove 5, the smallest element of $L_4$ is 6, 
so there is an edge $6\to 2$.  
Also, removing 6 will turn 2 into a leaf, since this is the only occurrence
of 2 in $P_4$.

\begin{picture}(200,100)
\put(0,0){0}
\put(3,25){\vector(0,-1){15}}
\put(25,25){2}
\put(3,48){\vector(1,-1){20}}
\put(0,50){6}
\put(0,75){1}
\put(3,73){\vector(0,-1){15}}
\put(50,50){3}
\put(52,48){\vector(-1,-1){20}}
\put(75,50){4}
\put(52,73){\vector(1,-1){20}}
\put(50,75){5}
\put(102,73){\vector(-1,-1){20}}
\put(78,75){\vector(0,-1){15}}
\put(110,50){$L_5 = \{2,7\}$}
\put(110,75){$P_5 = (4,4)$}
\end{picture}

\vspace{5pt}
\noindent
The smallest element of $L_5$ is 2, so it must point at 4.
Since there is still an unaccounted-for edge into 4, removing 2 does not
make 4 into a leaf.  Thus 7 will be the only leaf left after that step.\\
\begin{picture}(200,125)
\put(0,0){0}
\put(3,25){\vector(0,-1){15}}
\put(50,50){2}
\put(28,73){\vector(1,-1){20}}
\put(25,75){6}
\put(25,100){1}
\put(28,98){\vector(0,-1){15}}
\put(78,75){3}
\put(77,72){\vector(-1,-1){20}}
\put(50,25){4}
\put(27,48){\vector(1,-1){20}}
\put(25,50){5}
\put(75,48){\vector(-1,-1){20}}
\put(53,50){\vector(0,-1){15}}
\put(110,50){$L_6 = \{7\}$}
\put(110,75){$P_6 = (4)$}
\end{picture}

\vspace{5pt}
\noindent  Now, since 7 is the smallest leaf, its edge has head at 4.  From
that we can also conclude that the edge $4\to 0$ is the remaining edge in
the tree.  In general, whichever vertex did not yet have an outgoing edge
will have to point to $0$ at the end.

\begin{picture}(200,125)
\put(50,0){0}
\put(53,25){\vector(0,-1){15}}
\put(50,50){2}
\put(28,73){\vector(1,-1){20}}
\put(25,75){6}
\put(25,100){1}
\put(28,98){\vector(0,-1){15}}
\put(78,75){3}
\put(77,72){\vector(-1,-1){20}}
\put(50,25){4}
\put(27,48){\vector(1,-1){20}}
\put(25,50){5}
\put(75,48){\vector(-1,-1){20}}
\put(75,50){7}
\put(53,50){\vector(0,-1){15}}
\end{picture}

\vspace{5pt}
\noindent
Given the code, we were able to reconstruct the tree, and this can be done
no matter what $(n-1)$-tuple we are given.  It is clear
that this algorithm is the inverse of the 
algorithm given by Pr\"ufer.

\section{The Matrix Tree Theorem}\label{toggling}
In 1948, Tutte \cite{T} 
associated a matrix $A_T$ to the complete loopless
directed graph on vertices 
$\{0,\dots,n\}$, with edge from $i$ to $j$
of weight $a_{ij}$.  The general matrix is $A_T=(A_{ij})$, 
with $i$ and $j$ indexed from $0$ to $n$:
$$
A_{ij} ~=~ \left\{
           \begin{array}{ll}
                 -a_{ij} & i \neq j\\[1ex]
    \displaystyle{\sum_{k \neq i, 0\leq k \leq n}}  a_{ik} & i = j.
           \end{array}
           \right.
$$
The diagonal entry in row $i$ is the sum of the weights of the edges with tail
at $i$.
The row sums of such a matrix are zero, so the determinant of the matrix is
zero.  However, the following result by Tutte is very useful.
Denote by $A$ the $n\times n$
submatrix of $A_T$ obtained by crossing out its zeroth row and column.

\begin{theorem}[Matrix Tree Theorem]
The determinant of $A$ is the sum of the weights 
of all spanning trees (rooted at vertex $0$) of the graph. 
\end{theorem}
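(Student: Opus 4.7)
The plan is to expand $\det(A)$ via the Leibniz formula, interpret the resulting monomials as weighted digraphs on $\{0,1,\ldots,n\}$ in which every nonzero vertex has out-degree one, and then cancel all non-tree contributions by a sign-reversing involution --- the ``toggling'' alluded to in the section title.

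First I would write
\[
\det(A) \;=\; \sum_{\pi \in S_n} \text{sgn}(\pi) \prod_{i=1}^n A_{i,\pi(i)} .
\]
Each diagonal entry $A_{ii}=\sum_{k\neq i} a_{ik}$ is itself a sum, so distributing produces an expansion indexed by pairs $(\pi,g)$, where $g$ chooses, for every fixed point $i$ of $\pi$, a target $g(i)\in\{0,\ldots,n\}\setminus\{i\}$. Such a pair determines a function $f\colon\{1,\ldots,n\}\to\{0,1,\ldots,n\}$ with $f(i)\neq i$, by setting $f(i)=\pi(i)$ when $\pi(i)\neq i$ and $f(i)=g(i)$ when $\pi(i)=i$. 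The associated monomial is $\prod_i a_{i,f(i)}$, and a short cycle calculation (a fixed point contributes $+1$ and a non-trivial cycle of length $\ell$ contributes $(-1)^{\ell-1}$ to $\text{sgn}(\pi)$, while the off-diagonal sign conventions contribute an extra $(-1)^{\ell}$ per non-trivial cycle) shows that the overall signed weight of the pair is $(-1)^{c(\pi)}\prod_i a_{i,f(i)}$, where $c(\pi)$ counts the non-trivial cycles of $\pi$.

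Next I would group the expanded terms by the underlying function $f$. The key combinatorial observation is that the set of movable vertices of $\pi$ (those with $\pi(i)\neq i$) must be closed under $f$ and cannot contain $0$, so it is exactly a union of $f$-cycles lying in $\{1,\ldots,n\}$. Conversely, each such cycle of $f$ may be independently \emph{activated} (made into a non-trivial cycle of $\pi$, agreeing with $f$ there) or \emph{deactivated} (made into a block of fixed points of $\pi$, with $g$ recording the value of $f$). Hence the pairs $(\pi,g)$ producing a given $f$ are in natural bijection with subsets of the set of cycles of $f$ that lie inside $\{1,\ldots,n\}$.

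Finally, I would cancel the contributions from any $f$ possessing at least one such cycle by the involution that toggles the activation status of the cycle through the smallest cyclic vertex of $f$. This involution preserves $f$ and changes $c(\pi)$ by exactly one, so it is sign-reversing and fixed-point-free. The surviving $f$ are those with no cycles in $\{1,\ldots,n\}$; iterating $f$ from any vertex must then arrive at $0$, so these are precisely the spanning trees rooted at $0$, each contributing $+\prod_i a_{i,f(i)}=+w(T)$. The main obstacle I expect is the careful bookkeeping in the middle step: verifying that every $(\pi,g)$ does project to a valid $f$, that the cycle-subset parametrization really captures all compatible $\pi$, and that the signs collapse cleanly to $(-1)^{c(\pi)}$ so that the toggling step is a genuine sign-reversing involution.
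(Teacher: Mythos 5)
Your proposal is correct and is essentially the argument the paper itself invokes: it attributes the result to Zeilberger's bijective proof, expanding the determinant into arrays that map onto functional digraphs and cancelling every non-tree term by toggling a distinguished cycle on and off the diagonal (Zeilberger and you pick the cycle through the smallest element; the paper later prefers the largest, a choice it notes is immaterial to the theorem). Your sign bookkeeping, giving each nontrivial cycle a net factor of $-1$ and hence weight $(-1)^{c(\pi)}$, matches the paper's observation that an off-diagonal cycle always carries a negative sign, so nothing further is needed.
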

Zeilberger \cite{Z} published a nice bijective proof, also discovered
independently by Garsia.
A bijective proof of a more
general version of the theorem is due to Chaiken \cite{C}.  
We will think of the entries in our matrices as being indeterminates.
When the $i,j$-entry of the matrix (not on the diagonal) consists of a sum of
$k$ indeterminates, the matrix corresponds to a 
graph with $k$ edges $i\to j$, each having monomial weight.
Note that if
$a_{ij}$ is an integer, it can represent the number of edges $i\to j$ in
a graph (if $a_{ij}=0$, then there is no edge $i\to j$).  
Then $\det(A)$ is the number of spanning trees of the graph.

Throughout this dissertation we will be defining 
signed sets that come from matrices.
Each element of a matrix set is an array consisting of exactly one monomial 
entry from the matrix 
in each row and each column.  Each array comes with the sign 
corresponding to the array position in the determinant.
An element of a matrix set can be thought of as 
a signed permutation times a diagonal matrix.
The matrix set corresponding to a matrix $M$ consists of all possible such 
arrays.

The matrix $\hat{A}=(a_{ij})$ (where $i$ and $j$ are indexed from
$0$ to $n$) has the indeterminate weight corresponding to the edge
$i\to j$ in its $i,j$-entry.  If we formally
subtract this matrix from the diagonal $(n+1)\times (n+1)$
matrix $\hat{D}$ whose $i^{\rm th}$ 
diagonal entry is $\sum_{j=0}^n a_{ij}$, without 
simplifying, then we
obtain a matrix $\hat{D}-\hat{A}$ 
whose row sums are zero:  this matrix corresponds
to the complete directed graph {\em with} 
loops.  It differs from Tutte's matrix only
by the presence of 
$a_{ii}-a_{ii}$ in the $i^{\rm th}$ diagonal entry--essentially
we have added zero to each diagonal entry in Tutte's matrix.  Obviously
this doesn't change the $(0,0)$-minor; loops never appear in trees. 

Zeilberger's bijective proof \cite{Z} 
of the Matrix Tree Theorem hinges on the idea
that every functional digraph with a 
cycle corresponds to an array some of whose entries occur both on the diagonal
and off the diagonal of Tutte's submatrix $A$, 
with opposite signs.  In the determinant, these terms would cancel.  
He effectively 
introduces a surjective map from the matrix set 
corresponding to $A$ to the set of digraphs representing functions
from $\{1,2,\dots,n\}$ to $\{0,1,\dots,n\}$ according to the following rule:
The entry in row $i$ of the array represents the edge from $i$, and if this
entry is $\pm b_j$, either on or off the diagonal, then the edge is $i\to j$.

\begin{ex}
For $n=2$, the submatrix of $A$ is $\left[\begin{smallmatrix} 
a_{10}+a_{12}& -a_{12}\\-a_{21}&a_{20}+a_{21}\end{smallmatrix}\right]$.  
The matrix set is
\[
\left\{\left[\begin{smallmatrix} a_{10}&\\&a_{20}\end{smallmatrix}\right],
\left[\begin{smallmatrix} a_{10}&\\&a_{21}\end{smallmatrix}\right],
\left[\begin{smallmatrix} a_{12}&\\&a_{20}\end{smallmatrix}\right],
\left[\begin{smallmatrix} a_{12}&\\&a_{21}\end{smallmatrix}\right],
\left[\begin{smallmatrix} &-a_{12}\\-a_{21}&\end{smallmatrix}\right]\right\}
\]
and the surjective map is given in the following diagram:

\begin{picture}(130,240)
\put(0,212){$\begin{bmatrix} a_{10}&\\&a_{20}\end{bmatrix}\longrightarrow$}
\put(85,225){1}
\put(110,200){0}
\put(135,225){2}
\put(88,224){\vector(1,-1){19}}
\put(135,224){\vector(-1,-1){19}}
\put(0,162){$\begin{bmatrix} a_{10}&\\&a_{21}\end{bmatrix}\longrightarrow$}
\put(85,175){1}
\put(110,150){0}
\put(135,175){2}
\put(88,174){\vector(1,-1){19}}
\put(135,178){\vector(-1,0){45}}
\put(0,112){$\begin{bmatrix} a_{12}&\\&a_{20}\end{bmatrix}\longrightarrow$}
\put(85,125){1}
\put(110,100){0}
\put(135,125){2}
\put(89,128){\vector(1,0){45}}
\put(135,124){\vector(-1,-1){19}}
\put(0,62){$\begin{bmatrix} a_{12}&\\&a_{21}\end{bmatrix}\longrightarrow$}
\put(85,75){1}
\put(110,50){0}
\put(135,75){2}
\put(89,77){\vector(1,0){45}}
\put(135,81){\vector(-1,0){45}}
\put(0,12){$\begin{bmatrix} &-a_{12}\\-a_{21}&\end{bmatrix}$}
\put(75,25){\vector(1,1){20}}
\end{picture}

\noindent
Note that the only graph with a cycle gets mapped to twice.  Since we
are only interested in counting trees, we can eliminate graphs with cycles
and instead map the two preimages to one another.
\end{ex}

Using these ideas, Zeilberger constructs what amounts to a sign-reversing
involution on the matrix set corresponding to $A$ minus the set of trees.

In our case, we think of $A$ as being morally equal to 
$\hat{D}-\hat{A}$, and ``on the
diagonal'' as meaning ``occurring in $\hat{D}$'' and ``off the
diagonal'' as meaning ``occurring in the matrix $-\hat{A}$.'' 
By defining these terms in this way, we allow for loops.  Most of our
algorithms for finding codes using a matrix method will require us to
know how to ``toggle the diagonality'' of a cycle.  Toggling the diagonality
of a cycle in an array in a matrix set 
simply entails finding the unique array in the same set that satisfies two 
conditions:  (1)
the variable corresponding to any edge {\em not}
in the cycle is in the same location as in the original array, 
and (2) any variable corresponding to an 
edge that {\em is} in the cycle occurs within the same row but 
has the opposite ``diagonality'' from its location in the
original array.  Toggling the diagonality of a cycle is a sign-reversing 
involution
on the matrix set's subset corresponding to graphs containing
cycles.  An off-diagonal cycle will always come with a negative sign because
a cycle of odd length has a permutation 
sign of +1, but an odd number of negative
terms; a cycle of even length has an even number of negative terms but a
negative sign.

\begin{ex} 

\begin{picture}(400,100)
\put(0,60){0}
\put(25,75){2}
\put(27,73){\vector(0,-1){15}}
\put(25,50){4}
\put(25,50){\vector(-1,-1){20}}
\put(0,25){1}
\put(6,28){\vector(1,0){35}}
\put(45,25){3}
\put(44,34){\vector(-1,1){15}}
\put(50,50){$\longleftrightarrow$}
\put(75,50){$\begin{bmatrix}a_{13}&&&\\&a_{24}&&\\&&a_{34}&\\&&&a_{41}
\end{bmatrix}\longleftrightarrow 
\begin{bmatrix}&&-a_{13}&\\&a_{24}&&\\&&&-a_{34}\\-a_{41}&&&\end{bmatrix}$.} 
\end{picture}

\noindent
The graph above contains a cycle; the elements of the
matrix set corresponding to the (0,0)-minor of 
$\hat{D}-\hat{A}$ that correspond to this tree are both above: the
one on the left
consists entirely of entries from $\hat{D}$ while the one on the right has some
entries from $-\hat{A}$.
$a_{24}$ corresponds to the edge $2\to 4$ which is not in a cycle, so it
appears on the diagonal in both arrays, but the cycle $(134)$ could 
appear either on or off the diagonal.  The sign of the first array is +1
because all entries are on the diagonal.  The second array turns out to be
negative because the 3-cycle has sign +1 but there are 3 negative entries.
\end{ex}

For loops, it is a little bit less clear:

\begin{ex}

\begin{picture}(200,100)
\put(25,25){0}
\put(25,50){2}
\put(27,49){\vector(0,-1){15}}
\put(0,50){1}
\put(5,49){\vector(1,-1){20}}
\put(0,75){4}
\put(2,74){\vector(0,-1){15}}
\put(40,75){3}
\put(45,75){$\hookleftarrow$}
\put(49,80.5){\line(1,0){7}}
\put(70,50){$\longleftrightarrow
\begin{bmatrix}a_{10}&&&\\&a_{20}&&\\&&a_{33}&\\&&&a_{41}
\end{bmatrix}\longleftrightarrow
\begin{bmatrix}a_{10}&&&\\&a_{20}&&\\&&-a_{33}&\\&&&a_{41}\end{bmatrix}$.}
\end{picture}

\noindent
Here, although the entries are all apparently on the diagonal, we think of
the diagonality of the loop at 3 as having changed from the first matrix to 
the 
second.  The first array consists of entries only from $\hat{D}$ while the
$-a_{33}$ in the second one is an entry from $-\hat{A}$.
\end{ex}

If a graph has more than one cycle (including loops), we raise the issue of 
which cycle's diagonality gets toggled.  
Zeilberger arbitrarily chose to move 
the cycle with the smallest
element in it; we arbitrarily choose to move the cycle with the largest.  
All choices are equally valid but result in slightly different codes.  
The choice of the largest element in a cycle is consistent with some
tree surgical methods that give the same bijections as our matrix methods.

\section{Linear Algebra Setup}\label{setup}

If we set $a_{ij} = b_j$ for all $i,j$
in Tutte's matrix $A_T$, we get a matrix with each entry in column $j$ 
$=-b_j$ except on the diagonal.  We can calculate the $(0,0)$-minor
using row and column operations.  

If we ignore the zeroth row and column, we could find the 
determinant using the following operations.  We start with the submatrix $A$:
\[
\det\left[
\begin{array}{ccc}
b_0+b_2+b_3 & -b_2 & -b_3\\
-b_1 & b_0+b_1+b_3 & -b_3\\
-b_1 & -b_2 & b_0+b_1+b_2
\end{array}
\right].
\]
Subtract row 2 from row 3:
\[
=\det\left[
\begin{array}{ccc}
b_0+b_2+b_3 & -b_2 & -b_3\\
-b_1 & b_0+b_1+b_3 & -b_3\\
0 & -b_0-b_1-b_2-b_3 & b_0+b_1+b_2+b_3
\end{array}
\right]
\]
Add column 3 to column 2:
\[
=\det\left[
\begin{array}{ccc}
b_0+b_2+b_3 & -b_2-b_3 & -b_3\\
-b_1 & b_0+b_1 & -b_3\\
0 & 0 & b_0+b_1+b_2+b_3
\end{array}
\right]
\]
Subtract row 1 from row 2:
\[
=\det\left[
\begin{array}{ccc}
b_0+b_2+b_3 & -b_2-b_3 & -b_3\\
-b_0-b_1-b_2-b_3 & b_0+b_1+b_2+b_3 & 0\\
0 & 0 & b_0+b_1+b_2+b_3
\end{array}
\right]
\]
Add column 2 to column 1:
\[
=\det\left[
\begin{array}{ccc}
b_0 & -b_2-b_3 & -b_3\\
0 & b_0+b_1+b_2+b_3 & 0\\
0 & 0 & b_0+b_1+b_2+b_3
\end{array}
\right]
\]
(Call this last matrix $M$.)  Now it is evident (since we have an 
upper-triangular matrix) that
$\det M = b_0\,[b_0+b_1+b_2+b_3]^2$.  In general, $\det M = 
b_0 \, \left[\sum_{j=0}^n b_j\right]^{n-1}$. The number of trees is
$(n+1)^{(n-1)}$, and it is clear that this is also the number of terms in 
$\det M$ (we have an $(n-1)$-fold product of a sum of $n+1$ terms). 
Let sequences of the $b_j$ as read down the diagonal of the matrix be called
``codes.''  One would
like to have a bijection relating these codes to trees.  Each array of 
diagonal entries from $M$ should correspond to a tree.  

Note that in a matrix with this much redundancy, there are many different 
sequences of row and column
operations that can lead to an easily calculated determinant.

In the course of this research we found that allowing loops was more natural.
Consequently, instead of Tutte's matrix $A$ we use variations on 
$\hat{D}-\hat{A}$ as defined in \S\ref{toggling}.  For our purposes, we
will set $a_{ij}=b_j$ in $\hat{A}$ and $a_{ij}=B_j$ in $\hat{D}$.  At the
end of the long process of row and/or column operations, we set $B_j=b_j$.

\chapter{The Happy Code}\label{happyhell}

We can use the Matrix Tree Theorem to find a more ``natural'' code than the 
Pr\"ufer code by expanding on Knuth's ideas in \cite{K}.  As mentioned
in \S\ref{setup},
we specialize $a_{ij}$ to be $b_j$ in $\hat{A}$ and $B_j$ in $\hat{D}$.
Following Knuth, we introduce another indeterminate $\lambda$,
which will be a placeholder, by
putting $\lambda-b_0$ in the $(0,0)$-entry in the matrix, calling this new 
matrix $M_0'$.  We keep in mind that we are interested in the coefficient of
$\lambda$ in the determinant of $M_0'$, since it is equal to the 
$(0,0)$-minor of the original Matrix Tree Theorem matrix.  We will do 
row operations to form
a series of matrices, all with the same determinant.  The coefficient of
$\lambda$ in the final determinant represents the
sum of the weights of all the trees, because that was true of the original
matrix; the row operations do not affect that.  
The sequence of matrices is formed by subtracting the zeroth row from each of
the other rows, one at a time.  (In \cite{K}, the row operations are all 
performed simultaneously.)

Specifically, we begin with the matrix $M_0'$ whose $i,j$-entry is $-b_j$ when
$i\neq j$ and whose $i^{\rm th}$ diagonal entry is 
$-b_i+\delta_{i0}\lambda + (1-\delta_{i0})\sum_{j=0}^n B_j$.
(If $B_j$ is set equal to $b_j$ then the row sums are zero for rows $1$ 
through $n$.  Using $B_j$ for the diagonal entries enables us to keep track 
of loops.)   Let $B=\sum_{j=0}^n B_j$.
\[
M_0'=\left[\begin{array}{ccccc}
\lambda-b_0 & -b_1 & -b_2 & ... &-b_n\\
-b_0& B-b_1 & -b_2 & ... &-b_n \\
.&.&.&...&.\\
.&.&.&...&.\\
.&.&.&...&.\\
-b_0 &-b_1&-b_2&...&B-b_n
\end{array}\right].
\]
Subtract row $0$ from row $n$, without cancelling anything.  Then
\[
M_1=\left[\begin{array}{ccccc}
\lambda-b_0 & -b_1 & -b_2 & ... &-b_n\\
-b_0& B-b_1 & -b_2 & ... &-b_n \\
.&.&.&...&.\\
.&.&.&...&.\\
.&.&.&...&.\\
-\lambda+b_0-b_0 &b_1-b_1&b_2-b_2&...&b_n+B-b_n
\end{array}\right].
\]
The next step consists of arithmetic within entries:
\[
M_1'=\left[\begin{array}{ccccc}
\lambda-b_0 & -b_1 & -b_2 & ... &-b_n\\
-b_0& B-b_1 & -b_2 & ... &-b_n \\
.&.&.&...&.\\
.&.&.&...&.\\
.&.&.&...&.\\
-\lambda &0&0&...&B
\end{array}\right].
\]
Next, subtract row $0$ from row $n-1$, again without cancelling; repeat the
process.  The $i^{\rm th}$ step is:
\[
M_i=\left[\begin{smallmatrix}
\lambda-b_0 & -b_1 & \dots & -b_{n-i+1} & -b_{n-i+2}& \dots & -b_n\\
-b_0 & B-b_1 & \dots & -b_{n-i+1} &-b_{n-i+2}& \dots & -b_n\\
\vdots & \vdots & \ddots& \vdots &\vdots& & \vdots\\
-b_0-\lambda+b_0 & -b_1+b_1 & \dots & B-b_{n-i+1}+b_{n-i+1}
  &-b_{n-i+2}+b_{n-i+2}&\dots&-b_n+b_n\\
-\lambda& 0 &\dots&0 & B & \dots & 0\\
\vdots&\vdots&&\vdots&\vdots&\ddots&\vdots\\
-\lambda& 0 & \dots & 0 &0&\dots & B
\end{smallmatrix}\right],
\]
where the complicated row is row $n-i+1$.  Remember that the matrix is
indexed from 0 to $n$.
\[
M_i'=\begin{bmatrix}
\lambda-b_0 & -b_1 & \dots & -b_{n-i+1} & -b_{n-i+2}& \dots & -b_n\\
-b_0 & B-b_1 & \dots & -b_{n-i+1} &-b_{n-i+2}& \dots & -b_n\\
\vdots & \vdots & \ddots& \vdots &\vdots& & \vdots\\
-\lambda&0 &\dots&B&0&\dots&0\\
-\lambda& 0 &\dots&0 & B & \dots & 0\\
\vdots&\vdots&&\vdots&\vdots&\ddots&\vdots\\
-\lambda& 0 & \dots & 0 &0&\dots & B
\end{bmatrix}
\]
The last matrix is 
\[
M_n'=\left[\begin{array}{ccccc}
\lambda-b_0 & -b_1 & -b_2 & ... &-b_n\\
-\lambda& B & 0 & ... &0 \\
.&.&.&...&.\\
.&.&.&...&.\\
.&.&.&...&.\\
-\lambda &0&0&...&B
\end{array}\right].
\]
The coefficient of $\lambda$ in the determinant of this matrix is 
\[S=B^n - b_1B^{n-1} - Bb_2B^{n-2}-B^2b_3B^{n-3}-...-B^{n-1}b_n,\]
where we write each term with its
factors in the same order in which their columns appeared in the final 
matrix, $M_n'$.

\section{The Sets}
We define a sequence of signed
sets $A_0,A_0',A_1,A_1',\dots,A_{n+1},A_{n+1}'$.  
$A_0$ is the set of trees on vertices $0,\dots,n$, where each tree comes
with a positive sign.

The sets $A_0',A_1,A_1',\dots, A_{n}'$ are
matrix sets as described in \S\ref{toggling}:  For $1\leq i\leq n$, 
$A_i$ is the matrix set of arrays 
corresponding to $M_i$ and for $0\leq i\leq n$, $A_i'$ 
is the matrix set of arrays from $M_i'$.  For
example, when $n=2$, two of the elements of $A_2$ are: 
\begin{math}\left[ \begin{smallmatrix} \lambda & &\\ & B_0 &\\ & & 
B_1 \end{smallmatrix}\right]\end{math} and \begin{math}\left[
\begin{smallmatrix} & -b_1 & \\ -\lambda & & \\ & & B_2\end{smallmatrix}
\right]\end{math}.  
(These arrays with one element in each row 
and column are understood to come with the sign they would have in the 
determinant.)

$A_{n+1}$ is the set of signed monomials (written as ordered $n$-tuples) 
occurring in $S$, the coefficient of $\lambda$ in the determinant of $M_n'$:
\[
A_{n+1}=B^n-\left(\{b_1\}\times B^{n-1}\right)-\left(B\times \{b_2\}\times 
B^{n-2}\right)-\hdots-\left(B^{n-1}\times\{b_n\}\right).
\]  
Here, we think of $B$ as $B=\{B_0,B_1,\hdots,B_n\}$
and $B^k$ as the $k$-fold direct product of $B$ with itself.
We write the factors
in the left-to-right order of the columns in which the entries appeared.
The final set, $A_{n+1}'$, is the set of monomials (all positive now)
remaining when $B_j$ is set equal to $b_j$ and arithmetic is done on $S$:  
$A_{n+1}'=\{b_0\}\times B^{n-1}$.  
Ignoring the initial $b_0$, this is isomorphic to the set of codes (the 
codes are simply the subscripts of these monomials taken in order).

\section{The involutions}
We define a sequence of sign-reversing involutions 
$\phi_0,\phi_0',\phi_1,\phi_1',\dots,\phi_n',\phi_{n+1}$ 
on differences of two consecutive sets.  
In this set-up, when we write a negative sign in front of an array  
it implies that the matrix comes from the subtracted set.  

\subsubsection{Defining $\phi_0$}
$\phi_0: A_0-A_0'\to A_0-A_0'$ is defined as follows.  If $t$ is a tree, 
then $\phi_0 (t)$
is the negative of the array 
given by the bijective proof of the Matrix Tree Theorem:  
in the $i^{\rm th}$ diagonal, 
the $B_j$ term is taken if $\verb+succ+(i)=j$.  If $t$ is an
array in the negative matrix set, we look at the graph formed by the edges 
$i\to j$
for all $i,j$ where an indeterminate
with the subscript $j$ is in the $i^{\rm th}$
row of $t$.  If this is a tree, then it is $\phi_0(t)$.  If not, then
$\phi_0(t)$ can be found by toggling the diagonality of the cycle containing
the greatest vertex in a cycle in this graph
(see \S\ref{toggling}). 
In the case where a tree matches an
array, this is clearly a
sign-reversing involution.  For the case of the pairings of
two elements of $A_0'$, since we only moved one cycle on or off the diagonal,
and we know how to find it, it is clear that repeating the process will get
us back where we started.  $\phi_0$ is sign-reversing, as noted in 
\S\ref{toggling}

\subsubsection{Defining $\phi_i'$ for $0\leq i\leq n-1$}
Recall that for $0\leq i\leq n-1$, $M_{i+1}$ 
is obtained from $M_i'$ by row subtraction without cancellation.
$\phi_i':A_i' - A_{i+1}\to A_i' - A_{i+1}$ is defined as follows.  If
$a\in -A_{i+1}$ and
the entry in row $n-i+1$ is $-\lambda$ or $\pm b_j$ for some $j$, then
$\phi_i'(-a)=-a'\in -A_{i+1}$ where $a'$ is obtained from $a$ by
interchanging and negating rows 0 and $n-i+1$.  
(Remember that the matrices
are indexed from 0 to $n$.)  Otherwise,
$\phi_i'(a)=-a$ (in $-A_{i+1}$ if $a\in A_i$, and vice versa).

An example may help to clarify the method.  
In the $n=2$ case, $A_0'$ and $A_1$ are
the sets of arrays in which $\lambda$ occurs in $M_0'$ and $M_1$ respectively: 
\[
M_0'=\left[\begin{array}{ccc}
\lambda-b_0 & -b_1 & -b_2 \\
-b_0& B-b_1 & -b_2 \\
-b_0 &-b_1&B-b_2
\end{array}\right]\text{ and}
\]
\[
M_1=\left[\begin{array}{ccc}
\lambda-b_0 & -b_1 & -b_2 \\
-b_0& B-b_1 & -b_2 \\
-\lambda+b_0-b_0 &b_1-b_1&b_2+B-b_2
\end{array}\right].
\]
In the easier situation, where the array does not change, we have:
\[\phi_0'\left(
           \left[\begin{smallmatrix}
           \lambda &  &  \\
                 & B_0 & \\
                  &  & B_1
           \end{smallmatrix}\right]\right)= -\left[\begin{smallmatrix}
                                 \lambda &  &  \\
                                        & B_0 & \\
                                         &  & B_1
                                 \end{smallmatrix}\right]\in -A_1.
\]
Here, we started with an element of $A_0'$ and ended with an element of $-A_1$;
the two arrays look identical other than the negative sign outside.
Meanwhile, in the more confusing case:
\[\phi_0'\left(-
           \left[\begin{smallmatrix}
           \lambda &  &  \\
                 & B_0 & \\
                  &  & b_2
           \end{smallmatrix}\right]\right)= -\left[\begin{smallmatrix}
                                        &  & -b_2 \\
                                        & B_0 & \\
                                 -\lambda &  &
                                 \end{smallmatrix}\right].
\]
Note that in this example, both arrays appear in the set $-A_1$ but 
do not exist in $A_0'$, 
and the actual sign of $\phi_0'(-a)$ is different from that of 
$-a$.  We have switched the rows in which two of the entries appeared,
changing their signs but
leaving them in their original columns.  This is always the procedure
for $\phi_i'$.  Another possibility is:
\[
\phi_0'\left(-\left[\begin{smallmatrix} \lambda & & \\ & B_0 & \\ & & -b_2
\end{smallmatrix}\right]\right)=\left[\begin{smallmatrix}\lambda & & \\
& B_0 & \\ & & -b_2 \end{smallmatrix}\right]\in A_0'.
\]
In this example, we started with an element of $-A_1$ and $\phi_0'$ 
returned an element
of $A_0'$; $\phi_0'$ is an involution because if we apply it twice we get
back the same element we started with.  The involution is sign-reversing
because interchanging two rows of a matrix changes the sign of the determinant
and changing the signs of two rows has no effect.

\subsubsection{Defining $\phi_i$ for $1\leq i\leq n$}
Since $M_i'$ is obtained from $M_i$ by arithmetic within entries of the matrix,
the rest of the involutions for $1\leq i \leq n$
are of the form $\phi_i:A_i-A_i'\to A_i-A_i'$.
If $a\in A_i$ and the entry in the $(n-i+1)^{\rm th}$ row is $\pm b_j$,
then $\phi_i(a)=a'\in A_i$ where $a'$ is obtained from
$a$ by changing the sign of the entry in the $(n-i+1)^{\rm th}$ row.
Otherwise, $\phi_i(a)=-a\in A_i'$.
If $-a\in -A_i'$, then $\phi_i(-a)=a\in A_i$.
Returning to the $n=2$ example,
\[
M_1'=\left[\begin{array}{ccc}
           \lambda-b_0 & -b_1 & -b_2 \\
                  -b_0 & B-b_1 &-b_2 \\
           -\lambda    &  0    & B
           \end{array}\right].
\]
So we have
\[\phi_1\left(
           \left[\begin{smallmatrix}
           \lambda &  &  \\
                 & B_0 & \\
                  &  & -b_2
           \end{smallmatrix}\right]\right)= \left[\begin{smallmatrix}
                                \lambda &  &  \\
                                        & B_0 & \\
                                         &  & +b_2
                                \end{smallmatrix}\right]\in A_1,
\]
and
\[\phi_1\left(-
           \left[\begin{smallmatrix}
                 & -b_1 & \\
                 &  & -b_2\\
           -\lambda &  &
           \end{smallmatrix}\right]\right)= \left[\begin{smallmatrix}
                                  & -b_1 &  \\
                                        &  & -b_2\\
                                -\lambda &  & 
                                 \end{smallmatrix}\right]\in A_1.
\]
Note that in the first of these two examples, $\phi_1(a)$ and $a$ had
opposite signs but were both elements of $A_1$, whereas in the second
example, $-a\in -A_1'$ and $\phi_1(-a)\in A_1$.  This is clearly an
involution, since there is only one row of $M_i'$ in which entries appear
twice with opposite signs.  

\subsubsection{Defining $\phi_n'$ and $\phi_{n+1}$}
The last two involutions are a little bit different.
$\phi_n':A_n'-A_{n+1}\to A_n'-A_{n+1}$ takes an array in the matrix set $A_n'$ 
and matches it with the product of its non-$\lambda$ 
entries in the order of their columns
(with the sign the determinant would assign this term), and it takes signed 
monomials to the location of the corresponding array.  There is
always a $\lambda$ in the zeroth column.

For example, in
\[
M_2'=\begin{bmatrix}
    \lambda-b_0 & -b_1 & -b_2\\
    -\lambda & B & 0\\
    -\lambda & 0 & B
    \end{bmatrix},
\]
we have
\[
\phi_2'\left(\left[\begin{smallmatrix} 
                    & -b_1 & \\
                -\lambda & & \\
                    & & B_0 \end{smallmatrix}\right]\right)=-b_1 B_0,
\]
\[
\phi_2'(-B_0 b_2)=\left[\begin{smallmatrix}
                    &  & -b_2\\
                    & B_0 & \\
              -\lambda & & \end{smallmatrix}\right],
\]
and
\[
\phi_2'\left(\left[\begin{smallmatrix}
                   \lambda &  & \\
                         & B_0 & \\
                         & & B_0 \end{smallmatrix}\right]\right)=B_0 B_0.
\]
Again, this is an involution because it matches elements of $A_n'$ (the set 
of arrays) with monomials, in perfect pairs.

The final involution, $\phi_{n+1}:A_{n+1}-A_{n+1}'\to A_{n+1}-A_{n+1}'$, takes
any positive element of $A_{n+1}$
and matches it to another
monomial, obtained according to the following formula:
\[\phi_{n+1}\left(\prod_{k=1}^n B_{j_k}\right)=\begin{cases}
  -b_0 \displaystyle{\prod_{k=2}^n B_{j_k}}\in -A_{n+1}' &\text{if $j_1=0$},\\ 
   -B_{j_{j_1}}
     \displaystyle{\left(\prod_{k=2}^{j_1-1} B_{j_k}\right)} b_{j_1}
      \displaystyle{\left(\prod_{k=j_1+1}^n B_{j_k}\right)}\in A_{n+1}
        & \text{otherwise.}\end{cases}
\]
$\phi_{n+1}$ applied to any element of $-A_{n+1}'$ gives the same monomial,
only with the initial $-b_0$ changed to a positive $B_0$, in $A_{n+1}$.  If
we start with a negative element of $A_{n+1}$, it must have exactly one 
$b_j$ in the
$j^{th}$ position for some $j$.  
When we apply $\phi_{n+1}$, we make this $b_j$ upper-case and
switch it with the indeterminate in the first position, and change the sign.  This
is clearly a sign-reversing involution.

The ugliness of the formula belies the simplicity of the process.  A few 
examples with $n=6$ should help.
\[
\phi_7\left(B_3 B_4 B_6 B_0 B_2 B_0\right)=-B_6 B_4 b_3 B_0 B_2 B_0\in A_7.
\]
All we have done is toggle the capitalization of $B_3$ (in the first position 
of the product) and switch this new lower-case entry 
with the element in the third (its subscript) position (which is $B_6$).
The easiest possible case is:
\[
\phi_7\left(B_0 B_1 B_6 B_2 B_4 B_2\right)=-b_0 B_1 B_6 B_2 B_4 B_2\in -A_7'.
\]
More often some switching is involved, as in the first case and the next one:
\[
\phi_7\left(-B_4 B_3 B_0 B_3 b_5 B_1\right)=B_5 B_3 B_0 B_3 B_4 B_1\in A_7.
\]
(Remember, if there is a lower-case $b_j$ in the product, we switch it with
the first element of the product.)

These involutions are a key ingredient in the creation of the Happy Code.

\section{Garsia and Milne's Involution Principle}

Garsia and Milne \cite{G-M} found an extremely useful method while 
investigating bijective proofs for the Rogers-Ramanujan identities.

\begin{defn} A pseudo-sign-reversing involution is an involution
on a signed set, with the property that any point that is not fixed is sent to
a point with the opposite sign.  
\end{defn}

\begin{lemma}[Scholium: The Involution Principle \cite{G-M}]
Let $A$ be a finite signed set, $A=A^+ - A^-$, with pseudo-sign-reversing 
involutions
$\phi$ and $\psi$ whose fixed-point sets are $F(\phi)$ and $F(\psi)$ 
respectively.  Then there is a (fixed-point-free) sign-reversing
involution $\gamma$ on the set $F(\phi)-F(\psi)$.  Furthermore, 
$\gamma$ can be constructed using the following algorithm:
\end{lemma}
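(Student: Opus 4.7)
The plan is to define $\gamma$ by the ``ping-pong'' alternation between $\phi$ and $\psi$ that is standard for Garsia--Milne-type arguments.  Given $x \in F(\phi) - F(\psi)$, I form a sequence $x_0 = x, x_1, x_2, \ldots$ by applying $\psi$ first if $x_0$ is a copy of an element of $F(\phi)$ and $\phi$ first if $x_0$ is a copy of an element of $F(\psi)$, then alternating between the two maps.  The process halts the instant the map I was about to apply fixes its input; I set $\gamma(x)$ equal to that terminal element, reinterpreted as a member of the $F(\phi)$- or $F(\psi)$-side of the disjoint union $F(\phi) - F(\psi)$ according to which of the two maps fixed it.  Elements of $F(\phi) \cap F(\psi)$ contribute two distinct copies to the signed set and are paired to each other by trivial trajectories of length zero.

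The cleanest way to verify the required properties is to view the whole process inside the graph $G$ on vertex set $A$ with two edge types: a $\phi$-edge $\{y, \phi(y)\}$ for each $y \notin F(\phi)$ and a $\psi$-edge $\{y, \psi(y)\}$ for each $y \notin F(\psi)$.  Each vertex has at most one edge of each type, so every connected component is either a cycle (whose edges alternate types) or a path whose two endpoints are exactly the vertices missing one of the two edge types -- that is, the elements of $F(\phi) \cup F(\psi)$.  The algorithm simply walks from one endpoint of such a path to the other, so $\gamma \circ \gamma = \mathrm{id}$ is immediate from path-reversal.  The most delicate combinatorial point is showing that the trajectory is a \emph{simple} path, never revisiting a vertex before termination: any revisited vertex would force the walk to traverse one of its at most two edges in violation of the alternation rule, a contradiction.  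Termination then follows from finiteness of $A$.

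What remains is sign-reversal.  Each non-terminal step of the trajectory flips the sign in $A$, because $\phi$ and $\psi$ are pseudo-sign-reversing on non-fixed points, so the sign of $x_k$ in $A$ is $(-1)^k$ times the sign of $x_0$.  A short case analysis on the endpoint types finishes the argument.  If both endpoints lie in $F(\phi)$, the path begins and ends with a $\psi$-edge, which forces its length to be odd, so the endpoints have opposite signs in $A$; analogously if both endpoints lie in $F(\psi)$; if the endpoints are of mixed type the path has even length and the endpoints share a sign in $A$.  Combining these parities with the convention $(F(\phi) - F(\psi))^+ = F(\phi)^+ \sqcup F(\psi)^-$ and $(F(\phi) - F(\psi))^- = F(\phi)^- \sqcup F(\psi)^+$ shows that in every case the two endpoints have opposite signs in the signed set $F(\phi) - F(\psi)$, proving that $\gamma$ is sign-reversing (and hence, as noted after the earlier definition, fixed-point free).
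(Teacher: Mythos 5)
Your argument is correct, but note that the paper itself offers no proof of this lemma: it is stated as a scholium of Garsia and Milne's Involution Principle, attributed to \cite{G-M}, and the ``proof'' consists solely of reproducing the pseudo-code from \cite{S-W} that constructs $\gamma$. You actually verify that the construction works, via the standard graph-theoretic reading: put a $\phi$-edge at each non-fixed point of $\phi$ and a $\psi$-edge at each non-fixed point of $\psi$, observe that every vertex has at most one edge of each type so that components are alternating paths and cycles, and note that the algorithm walks a path from one degree-$\le 1$ endpoint to the other, whence $\gamma\circ\gamma=\mathrm{id}$ by path reversal and termination by finiteness. Your parity analysis of the three endpoint configurations, combined with the sign convention $(F(\phi)-F(\psi))^+=F(\phi)^+\sqcup F(\psi)^-$, correctly yields sign-reversal in all cases. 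The one place where you go beyond the quoted algorithm is the treatment of $F(\phi)\cap F(\psi)$: read literally, the pseudo-code would return $\gamma(x)=x$ for such an $x$, and your convention of pairing the two copies of $x$ (which carry opposite signs in the disjoint union $F(\phi)-F(\psi)$) is exactly what is needed for $\gamma$ to be fixed-point-free and sign-reversing in general. This subtlety never surfaces in the paper's applications, since in the Bread Lemma one of the two fixed-point sets is empty, but your proof covers the lemma as actually stated.
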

\begin{quote}
\begin{tabbing}
{\bf begin}\= \\
\>{\bf if} $\phi(x)$\=$\,=x$ {\bf then} \\
\> \>$y\leftarrow x$\\
\> \>{\bf repeat} \= \\
\> \> \>$z\leftarrow\psi(y)$\\
\> \> \>$y\leftarrow\phi(z)$\\
\> \>{\bf until} $\phi(z)\,=\,z$ {\bf or} $\psi(y)\,=\,y$\\
\> \>{\bf if} $\phi(z)\,=\,z$ {\bf then}\\
\> \> \>$\gamma(x)\leftarrow z$\\
\> \>{\bf else}\\
\> \> \>$\gamma(x)\leftarrow y$\\
\>{\bf else if} $\psi(x)\,=x$ {\bf then}\\
\> \>$y\leftarrow x$\\
\> \>{\bf repeat}\\
\> \> \>$z\leftarrow \phi(y)$\\
\> \> \>$y\leftarrow \psi(z)$\\
\> \>{\bf until} $\psi(z)\,=\,z$ {\bf or} $\phi(y)\,=\,y$\\
\> \>{\bf if} $\psi(z)\,=\,z$ {\bf then}\\
\> \> \>$\gamma(x)\leftarrow z$\\
\> \>{\bf else}\\
\> \> \>$\gamma(x)\leftarrow y$\\
\>{\bf else}\\
\> \>\{$x$ is not a fixed point of $\phi$ or $\psi$\}\\
{\bf end.}\footnotemark
\end{tabbing}
\end{quote}
\footnotetext{Pseudo-code quoted from \cite{S-W}, pages 141-142}
This Lemma is extremely important because it not only establishes the
existence of the involution $\gamma$ but actually shows how to construct
it.

\begin{lemma}[The Bread Lemma]
Given two sign-reversing involutions, $\phi:A-B\to
A-B$ and $\psi:B-C\to B-C$, there is a sign-reversing 
involution on $A-C$.  
\end{lemma}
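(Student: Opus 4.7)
The plan is to reduce the Bread Lemma to a direct application of the Involution Principle (Scholium) by packaging $\phi$ and $\psi$ together with a swap map on a single larger signed set.

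First I would form the signed set
\[
T \;=\; (A-B) \;\sqcup\; (B-C),
\]
remembering that $B$ appears twice in $T$ with opposite signs: once as the negative part of $A-B$ (call this copy $B_\ell$) and once as the positive part of $B-C$ (call this copy $B_r$). Thus the positive part of $T$ is $A \sqcup B_r$ and the negative part is $B_\ell \sqcup C$.

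Next, I would define two pseudo-sign-reversing involutions on $T$. The first, $I$, pairs each element of $B_\ell$ with its twin in $B_r$ and fixes everything in $A \sqcup C$; since twins have opposite signs, $I$ is sign-reversing off its fixed-point set, and $F(I) = A - C$ as a signed set. The second, $J$, is defined piecewise: it acts as $\phi$ on the $A-B$ summand and as $\psi$ on the $B-C$ summand. Because sign-reversing involutions have no fixed points, $F(J) = \emptyset$, and $J$ is in particular pseudo-sign-reversing on all of $T$.

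Finally, I would invoke the Scholium applied to $(I,J)$, which produces a fixed-point-free sign-reversing involution on $F(I) - F(J) = (A - C) - \emptyset = A - C$. That is exactly the involution whose existence the lemma asserts, so the proof is complete.

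The main obstacle is really just bookkeeping, not an idea. One must verify that the signs on $B_\ell$ and $B_r$ are indeed opposite so that $I$ is sign-reversing where it acts nontrivially, and that $J$ is well-defined on the disjoint union (no element of $T$ lies in both summands, by construction). Both points are immediate from the disjoint-union setup, and from then on the Scholium's algorithm, which alternates $\phi$, swap, $\psi$, swap, $\phi,\ldots$, can also be read off as a concrete recipe for the constructed involution on $A-C$.
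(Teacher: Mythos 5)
Your proposal is correct and is essentially identical to the paper's own proof: the map you call $I$ is the paper's $\overline{-I_B}$ (the negative identity on the two opposite-signed copies of $B$, extended by the identity on $A-C$), your $J$ is the paper's $\phi+\psi$, and both arguments conclude by feeding these two pseudo-sign-reversing involutions, with fixed-point sets $A-C$ and $\emptyset$, into the Involution Principle. No gaps.
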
\label{bread}
\begin{proof}
Let $\overline{-I_B}$ represent the negative identity map on $B-B$, extended
to be the identity on $A-C$.  
\[
\overline{-I_B}\left(x\right)=\begin{cases}
                                -x & \text{if $x\in -B+B$},\\
                                 x & \text{if $x\in A-C$.}\end{cases}
\]
Let $\phi + \psi:A-B+B-C\to A-B+B-C$ be defined as follows:
\[
\left(\phi +\psi\right)\left(x\right) = \begin{cases} \phi\left(x\right) &
\text{if $x\in A-B$}\\ \psi\left(x\right) & \text{if $x\in B-C$}\end{cases}.
\]
Then both $\overline{-I_B}$ and $(\phi+\psi)$ are pseudo-sign-reversing
involutions, and 
$F(\overline{-I_B})=A-C$ and $F(\phi + \psi)=\emptyset$.  The algorithm
of the Involution Principle provides a sign-reversing
involution on $F(\phi +\psi)-F(\overline{-I_B})=A-C-\emptyset=A-C$.  
\end{proof}

We call it the Bread Lemma because it can be visualized as a process to
remove all of the insides from a $B$ sandwich, leaving the diner with
only a couple of slices of bread (the sets $A$ and $C$).

\begin{lemma}\label{general}  
Given any sequence of signed sets $S_0,S_1,\dots,S_{k+1}$, 
where $S_0$ and $S_{k+1}$ contain only positive elements, and
sign-reversing involutions $\beta_0,\dots,\beta_k$ where $\beta_i$ acts 
on $S_i-S_{i+1}$, there is a constructible bijection 
between $S_0$ and $S_{k+1}$.
\end{lemma}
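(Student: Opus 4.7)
The plan is to induct on $k$ and reduce to the Bread Lemma at each step. The Bread Lemma already handles the case $k=1$: given sign-reversing involutions on $S_0-S_1$ and $S_1-S_2$ with $S_0, S_2$ purely positive, it produces a sign-reversing involution on $S_0-S_2$, and since that involution has no fixed points and must pair positive with negative, it is exactly a bijection $S_0 \leftrightarrow S_2$.

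For the base case $k=0$ the conclusion is immediate: a sign-reversing involution on $S_0-S_1$, where both sets consist of positive elements, has no fixed points and therefore matches each element of $S_0$ (viewed as a positive element of the signed set $S_0-S_1$) with an element of $S_1$ (which contributes negatively). For the inductive step, assume the statement for sequences of length $k$, and consider a sequence $S_0,S_1,\dots,S_{k+1}$ together with sign-reversing involutions $\beta_0,\dots,\beta_k$. Apply the Bread Lemma (with $A=S_0$, $B=S_1$, $C=S_2$) to $\beta_0$ and $\beta_1$ to obtain a sign-reversing involution $\widetilde{\beta}_0$ on $S_0-S_2$. This yields the strictly shorter sequence $S_0, S_2, S_3,\dots,S_{k+1}$ equipped with involutions $\widetilde{\beta}_0, \beta_2, \beta_3, \dots, \beta_k$, to which the inductive hypothesis applies, producing a bijection $S_0 \leftrightarrow S_{k+1}$.

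The only point that needs care is \emph{constructibility}: we should verify that the end-to-end bijection can actually be computed by an algorithm. This follows because each invocation of the Bread Lemma is itself constructive (via the Involution Principle), and the recursion unfolds into an explicit ``ping-pong'' procedure: starting from $x\in S_0$, one alternates between applying a $\beta_i$ and an appropriate sign-flip identity, bouncing between successive sets $S_i$ until landing in $S_{k+1}$. Finiteness of the signed sets guarantees termination at every level of the recursion, so no real obstacle arises; the content of the lemma is simply that the Bread Lemma can be iterated, and the ``hard'' step has already been done in proving it.
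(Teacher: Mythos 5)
Your proposal is correct and follows essentially the same route as the paper: repeated application of the Bread Lemma to collapse the intermediate sets one at a time, followed by the observation that a fixed-point-free sign-reversing involution on $S_0-S_{k+1}$, with both sets purely positive, must pair each element of $S_0$ with one of $S_{k+1}$. The only cosmetic difference is that you package the iteration as a formal induction on $k$, while the paper describes the same sandwiching process iteratively; constructibility is handled identically in both, via the algorithm of the Involution Principle inside each Bread Lemma step.
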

\begin{proof}
By repeated applications of The Bread Lemma (Lemma \ref{bread}), we can
``eliminate'' all of the in-between sets as follows. 
Let $A=S_0$, $B=S_1$, $C=S_2$, $\phi=\beta_0$,
and $\psi=\beta_1$.  The Bread Lemma constructs a
sign-reversing involution on $S_0-S_2$, and this involution still
satisfies the hypotheses of the Bread Lemma.  Now let $B=S_2$, $C=S_3$, etc.
We keep sandwiching in until
we arrive at $A=S_0$, $B=S_k$, and $C=S_{k+1}$, where $\phi$ is 
the involution achieved by so many applications of the Bread Lemma and 
$\psi$ is $\beta_{k+1}$.  One more application, and we have a
sign-reversing involution on $S_0-S_{k+1}$.  However,
$\left(S_0-S_{k+1}\right)^+=S_0$ and $\left(S_0-S_{k+1}\right)^-=-S_{k+1}$ 
since these two sets contained only positive elements.
Hence the only way for the
involution to be sign-reversing is for each element of $S_0$ to be mapped
to an element of $S_{k+1}$.
Thus, we have found a bijection between $S_0$ and $S_{k+1}$.  Note that we have
not simply
proven the existence of a bijection, but actually provided an algorithm for 
constructing it.
\end{proof}
\begin{theorem}
Given the sets $A_0,A_0',A_1,\dots,A_{n+1}'$ 
and the sign-reversing involutions 
$\phi_0,\phi_0',\dots,\phi_n,\phi_n',\phi_{n+1}$ 
defined above, there is a constructible
bijection between $A_0$ (the set of trees) and $A_{n+1}'$ (the set of codes).
\end{theorem}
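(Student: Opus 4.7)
The plan is to recognize that the theorem is essentially a direct application of Lemma \ref{general}, with the sequence of signed sets and involutions already in place. So the proof should amount to checking that the hypotheses of Lemma \ref{general} are satisfied and then invoking it.

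First I would rename the sets into a single sequence $S_0, S_1, \dots, S_{2n+2}$ by setting $S_0 = A_0$, $S_1 = A_0'$, $S_2 = A_1$, $S_3 = A_1'$, and so on, ending with $S_{2n+2} = A_{n+1}'$. Similarly relabel the involutions $\beta_0 = \phi_0$, $\beta_1 = \phi_0'$, $\beta_2 = \phi_1$, etc., so that $\beta_i$ is a sign-reversing involution on $S_i - S_{i+1}$. This is exactly the setup of Lemma \ref{general}.

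Next I would verify the two hypotheses of Lemma \ref{general}. The first is that $S_0$ and $S_{2n+2}$ contain only positive elements. For $S_0 = A_0$, this is by definition: each tree on $\{0,\dots,n\}$ carries a positive sign. For $S_{2n+2} = A_{n+1}'$, this was noted in the construction of the sets: after setting $B_j = b_j$ and simplifying $S = b_0 B^{n-1}$, every resulting monomial carries a positive sign. The second hypothesis is that each $\beta_i$ is a sign-reversing involution on $S_i - S_{i+1}$; this is exactly what was verified, case by case, in the definitions of $\phi_0$, $\phi_i'$ ($0 \le i \le n-1$), $\phi_i$ ($1 \le i \le n$), $\phi_n'$, and $\phi_{n+1}$.

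Having checked the hypotheses, I would invoke Lemma \ref{general} to obtain a constructible bijection between $S_0 = A_0$ and $S_{2n+2} = A_{n+1}'$. The main ``obstacle,'' if any, is purely bookkeeping: making sure the indexing lines up so that the target of $\beta_i$ is the source of $\beta_{i+1}$, and confirming that no signs have been lost between the steps. Since the algorithm from the Involution Principle is explicit, the resulting bijection is not merely an existence statement but can be carried out on an input tree by repeatedly applying the $\phi$'s and $\phi'$'s in the order prescribed by the proof of the Bread Lemma. This produces the Happy Code associated with any given tree, and conversely recovers the tree from its code.
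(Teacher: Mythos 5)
Your proposal is correct and matches the paper's own proof, which likewise verifies the hypotheses of Lemma \ref{general} and invokes it to construct the bijection; your relabeling of the sets as $S_0,\dots,S_{2n+2}$ and the explicit check that the two endpoint sets are entirely positive are just a more detailed rendering of the same argument.
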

\begin{proof}
The sets $A_0,\dots,A_{n+1}'$ and the involutions $\phi_0,\dots,\phi_{n+1}$
satisfy the hypotheses of Lemma \ref{general}.  Thus we can construct the
bijection between the set of trees and the set of codes.
\end{proof}
\section{An example}

Consider the case $n=2$.  We will apply the theorem to find the code that
corresponds to the tree $1\to 2\to 0 \in A_0$.

First we apply $\phi_0$ to get an element of $-A_0'$:
\[
\phi_0\left(1\to 2\to 0\right)=-\left[\begin{smallmatrix}
                         \lambda & & \\
                          & B_2 & \\
                          & & B_0 \end{smallmatrix}\right]\in -A_0'
\]
Next we apply $\overline{-I_{A_0'}}$:
\[
\overline{-I_{A_0'}}\left(-\left[\begin{smallmatrix}
                         \lambda & & \\
                          & B_2 & \\
                          & & B_0 \end{smallmatrix}\right]\right)=
                    \left[\begin{smallmatrix}
                         \lambda & & \\
                          & B_2 & \\
                          & & B_0 \end{smallmatrix}\right]\in A_0'
\]
We alternate between $\phi$s and $\overline{-I}$s.  Each application of a
$\overline{-I}$ merely changes the sign of the element (and of the subset it
lies in):
\[
\overline{-I_{A_1}}\circ\phi_0'\left(-\left[\begin{smallmatrix}
                         \lambda & & \\
                          & B_2 & \\
                          & & B_0 \end{smallmatrix}\right]\right)=
                    \left[\begin{smallmatrix}
                         \lambda & & \\
                          & B_2 & \\
                          & & B_0 \end{smallmatrix}\right]\in A_1
\]
\[
\overline{-I_{A_1'}}\circ\phi_1\left(\left[\begin{smallmatrix}
                         \lambda & & \\
                         & B_2 & \\
                         & & B_0 \end{smallmatrix}\right]\right)=
                    \left[\begin{smallmatrix}
                         \lambda & & \\
                          & B_2 & \\
                          & & B_0 \end{smallmatrix}\right]\in A_1'
\]
\[
\overline{-I_{A_2}}\circ\phi_1'\left(\left[\begin{smallmatrix}
                         \lambda & & \\
                          & B_2 & \\
                          & & B_0 \end{smallmatrix}\right]\right)=
                    \left[\begin{smallmatrix}
                         \lambda & & \\
                          & B_2 & \\
                          & & B_0 \end{smallmatrix}\right]\in A_2
\]
\[
\overline{-I_{A_2'}}\circ\phi_2\left(\left[\begin{smallmatrix}
                         \lambda & & \\
                          & B_2 & \\
                          & & B_0 \end{smallmatrix}\right]\right)=
                    \left[\begin{smallmatrix}
                         \lambda & & \\
                          & B_2 & \\
                          & & B_0 \end{smallmatrix}\right]\in A_2'.
\]
Since $n=2$, we are in the last matrix set.  
\[
\overline{-I_{A_3}}\circ
\phi_2'\left(\left[\begin{smallmatrix}
                         \lambda & & \\
                          & B_2 & \\
                          & & B_0 \end{smallmatrix}\right]\right)=
B_2 B_0 \in A_3
\]
This is the exciting part!
\[
\phi_3\left(B_2 B_0\right)=-B_0 b_2 \in A_3
\]
\[
\phi_2'\circ\overline{-I_{A_3}}\left(-B_0 b_2\right)=\left[\begin{smallmatrix}
 & & -b_2 \\ & B_0 & \\ -\lambda & & 
\end{smallmatrix}\right]\in A_2'
\]
Now there is nothing to stop us from passing through several sets in a row
on our way back up the sequence of sets
via the following involutions:
\[
\phi_2\circ\overline{-I_{A_2'}}\left(\left[\begin{smallmatrix}
 & & -b_2 \\ & B_0 & \\ -\lambda & &
\end{smallmatrix}\right]\right)=\left[\begin{smallmatrix}
 & & -b_2 \\ & B_0 & \\ -\lambda & &
\end{smallmatrix}\right]
\in A_2
\]

\[
\phi_1'\circ\overline{-I_{A_2}}\left(\left[\begin{smallmatrix}
 & & -b_2 \\ & B_0 & \\ -\lambda & &
\end{smallmatrix}\right]\right)=
\left[\begin{smallmatrix}
 & & -b_2 \\ & B_0 & \\ -\lambda & &
\end{smallmatrix}\right]\in A_1'
\]

\[
\phi_1\circ\overline{-I_{A_1'}}\left(\left[\begin{smallmatrix}
 & & -b_2 \\ & B_0 & \\ -\lambda & &
\end{smallmatrix}\right]\right)=
\left[\begin{smallmatrix}
 & & -b_2 \\ & B_0 & \\ -\lambda & &
\end{smallmatrix}\right]\in A_1
\]
At this point we apply $\phi_0'\circ\overline{-I_{A_1}}$.  
$\overline{-I_{A_1}}$ takes us to the set $-A_1$, and in this case 
an application of $\phi_0'$ maps to another element of $-A_1$:
\[
\phi_0'\circ\overline{-I_{A_1}}\left(\left[\begin{smallmatrix}
 & & -b_2 \\ & B_0 & \\ -\lambda & &
\end{smallmatrix}\right]\right)=-\left[\begin{smallmatrix}
\lambda & & \\ & B_0 & \\ & & b_2
\end{smallmatrix}\right]\in -A_1
\]
\[
\overline{-I_{A_1}}\left(-\left[\begin{smallmatrix}
\lambda & & \\ & B_0 & \\ & & b_2
\end{smallmatrix}\right]\right)=\left[\begin{smallmatrix}
\lambda & & \\ & B_0 & \\ & & b_2
\end{smallmatrix}\right]\in A_1
\]
\[
\phi_1\left(\left[\begin{smallmatrix}
\lambda & & \\ & B_0 & \\ & & b_2
\end{smallmatrix}\right]\right)=\left[\begin{smallmatrix}
\lambda & & \\ & B_0 & \\ & & -b_2
\end{smallmatrix}\right]\in A_1
\]
\[
\overline{-I_{A_0'}}\circ\phi_0'\circ\overline{-I_{A_1}}\left(
\left[\begin{smallmatrix}
\lambda & & \\ & B_0 & \\ & & -b_2
\end{smallmatrix}\right]\right)=-\left[\begin{smallmatrix}
\lambda & & \\ & B_0 & \\ & & -b_2
\end{smallmatrix}\right]\in -A_0'
\]
At this point it is the Matrix Tree Theorem that comes to the rescue, in
the form of $\phi_0$:
\[
\phi_0\left(-\left[\begin{smallmatrix}
\lambda & & \\ & B_0 & \\ & & -b_2
\end{smallmatrix}\right]\right)=-\left[\begin{smallmatrix}
\lambda & & \\ & B_0 & \\ & & B_2
\end{smallmatrix}\right]\in -A_0'
\]
The involutions now take us directly down the sequence of matrices to
the last one.
\[
\overline{-I_{A_1'}}\circ\phi_1\circ\overline{-I_{A_1}}\circ\phi_0'\circ
\overline{-I_{A_0'}}\left(-\left[\begin{smallmatrix}
\lambda & & \\ & B_0 & \\ & & B_2
\end{smallmatrix}\right]\right)=
    \left[\begin{smallmatrix}
   \lambda & & \\ & B_0 & \\ & & B_2
   \end{smallmatrix}\right]\in A_1'
\]
\[
\overline{-I_{A_2'}}\circ\phi_2\circ\overline{-I_{A_2}}\circ\phi_1'
\left(\left[\begin{smallmatrix}
   \lambda & & \\ & B_0 & \\ & & B_2
   \end{smallmatrix}\right]\right)=
    \left[\begin{smallmatrix}
   \lambda & & \\ & B_0 & \\ & & B_2
   \end{smallmatrix}\right]\in A_2'
\]
Coming down the home stretch:
\[
\overline{-I_{A_3}}\circ\phi_2'\left(\left[\begin{smallmatrix}
   \lambda & & \\ & B_0 & \\ & & B_2
   \end{smallmatrix}\right]\right)=B_0 B_2 \in A_3
\]
And finally:
\[
\phi_3\left(B_0 B_2\right)=-b_0 B_2 \in -A_3'.
\]
Thus, the Happy Code for the tree $1\to 2\to 0$ is $B_2$.

To find the tree for a code, we can easily follow the involutions through
backwards, undoing the whole process.  In this sense, the Happy Code is more
natural (hence ``happier'') than the Pr\"ufer Code.

Computationally, finding the Happy Code for a tree is a slow process.  
However, later we will see a method for calculating the Happy Code that does 
not resort to matrices but works directly with the tree.

\chapter{The Blob Code}
Another code results from a different sequence of sets and involutions, but 
still using the Involution Principle and the Bread Lemma.  
We begin with the $n\times n$ submatrix from the Matrix Tree Theorem (obtained
by crossing out the zeroth row and column):
\[
C_0'=\begin{bmatrix}
B-b_1 & -b_2 & \hdots & -b_n \\
-b_1 & B-b_2 & \hdots & -b_n \\
. & . & \hdots & . \\
. & . & \hdots & . \\
. & . & \hdots & . \\
-b_1 & -b_2 & \hdots & B-b_n
\end{bmatrix}
\]
The Blob Code is related to the process of alternately 
performing row operations and column
operations on adjacent rows and columns as follows.
The first step is to subtract row $n-1$ from row $n$ (without cancellation).
\[
R_1=\begin{bmatrix}
B-b_1 & -b_2 & \hdots & -b_{n-1} & -b_n \\
-b_1 & B-b_2 & \hdots & -b_{n-1} & -b_n \\
. & . & \hdots & . & . \\
. & . & \hdots & . & . \\
. & . & \hdots & . & . \\
-b_1 & -b_2 & \hdots & B-b_{n-1} & -b_n \\
-b_1+b_1 & -b_2+b_2 & \hdots & -b_{n-1}-B+b_{n-1} & B-b_n+b_n
\end{bmatrix}
\]
Now we perform arithmetic within entries, but only in row $n$:
\[
R_1'=\begin{bmatrix}
B-b_1 & -b_2 & \hdots & -b_{n-1} & -b_n \\
-b_1 & B-b_2 & \hdots & -b_{n-1} & -b_n \\
. & . & \hdots & . & . \\
. & . & \hdots & . & . \\
. & . & \hdots & . & . \\
-b_1 & -b_2 & \hdots & B-b_{n-1} & -b_n \\
0 & 0 & \hdots & -B & B
\end{bmatrix}
\]
Then we add column $n$ to column $n-1$.  
\[
C_1=\begin{bmatrix}
B-b_1 & -b_2 & \hdots & -b_{n-1}-b_n & -b_n \\
-b_1 & B-b_2 & \hdots & -b_{n-1}-b_n & -b_n \\
. & . & \hdots & . & . \\
. & . & \hdots & . & . \\
. & . & \hdots & . & . \\
-b_1 & -b_2 & \hdots & B-b_{n-1}-b_n & -b_n \\
0 & 0 & \hdots & -B+B & B
\end{bmatrix}
\]
And once again, perform arithmetic within entries in row $n$
({\em not} column $n-1$):
\[
C_1'=\begin{bmatrix}
B-b_1 & -b_2 & \hdots & -b_{n-1}-b_n & -b_n \\
-b_1 & B-b_2 & \hdots & -b_{n-1}-b_n & -b_n \\
. & . & \hdots & . & . \\
. & . & \hdots & . & . \\
. & . & \hdots & . & . \\
-b_1 & -b_2 & \hdots & B-b_{n-1}-b_n & -b_n \\
0 & 0 & \hdots & 0 & B
\end{bmatrix}
\]
All of that was the first step.
We work our way up the matrix this way:  at the $i^{\rm th}$ step we
first subtract row $n-i$ from row $n-i+1$, 
then add column $n-i+1$ to column $n-i$, cancelling only within row $n-i+1$,
until the matrix consists of $B$ on the diagonals and $0$ elsewhere, except 
in the first row.  At the end of the $i^{\rm th}$ 
step the $(n-i)^{\rm th}$ diagonal
entry  consists of $B-\sum b_j$ where the sum is over 
$n-i\leq j\leq n$. After the
last column operation, we set $B_j=b_j$ so that the diagonal entry in row 1
consists only of $b_0$.
At the end of the whole process, the first row consists of 
$b_0$ in its first entry and a bunch of garbage
in the other entries, but the rest of the matrix is just $B$ on the diagonal.
The last 2 matrices are:
\[
C_{n-1}=\begin{bmatrix}
B-b_1-\displaystyle{\sum_{k=2}^n} b_k & -\displaystyle{\sum_{k=2}^n} b_k & 
\displaystyle{\sum_{k=3}^n} b_k& \hdots & -b_{n-1}-b_n & -b_n \\
-B+B & B & 0 &\hdots & 0 & 0 \\
. & . & . &\hdots & . & . \\
. & . & . &\hdots & . & . \\
. & . & . &\hdots & . & . \\
0 & 0 & . &\hdots & B & 0 \\
0 & 0 & 0 &\hdots & 0 & B
\end{bmatrix}
\]
and
\[
C_{n-1}'=\begin{bmatrix}
b_0 & -b_2-b_3-\dots-b_n & \hdots & -b_{n-1}-b_n & -b_n \\
0 & \sum_{j=0}^n b_j & \hdots & 0 & 0 \\
. & . & \hdots & . & . \\
. & . & \hdots & . & . \\
. & . & \hdots & . & . \\
0 & 0 & \hdots &  \sum_{j=0}^n b_j & 0 \\
0 & 0 & \hdots & 0 &  \sum_{j=0}^n b_j
\end{bmatrix}.
\]
This matrix clearly has determinant equal to $b_0 B^{n-1}$.

\section{Orlin's ideas}\label{oops}
In \cite{O}, Orlin introduced the idea of identifying two vertices of a graph.
We explain how this notion is used with the matrices in the 
construction of the Blob Code.
Assume we have a weighted directed graph on vertices $0$ through $n$.  
Loops are allowed.  We assume there are no multiple edges, because multiple
edges can be subsumed into the weights.
The weight of the
edge from $i$ to $j$ is $a_{ij}$.  
\begin{defn}In a directed graph $D$, two vertices $i$ and $j$ are
identifiable when $a_{ik}=a_{jk}$ for all $k$.
\end{defn}
Note that this definition includes $k=i$ and $k=j$; 
so if there is an edge from $i$ to $j$ then there needs to be a loop on $j$.

Identifiability is an equivalence relation, so there is some sense in which
we can think of two identifiable vertices as being redundant (their outgoing
edges have the same heads).  
\begin{defn}
If we ``identify'' two identifiable vertices $i$ and
$j$ to a generalized vertex, called \verb+blob+, 
and eliminate one set of the duplicate edges, 
we end up with a new digraph in which there are $a_{ik}$($=a_{jk}$) 
edges $\verb+blob+\rightarrow k$ for all $k\neq i,j$, 
and there are $a_{ki}+a_{kj}$
edges $k\rightarrow\verb+blob+$.  
There are also $a_{ij}+a_{ji}$ loops on the \verb+blob+.
\end{defn}
We take full blame for the naming of the \verb+blob+.
We differ from Orlin in our visualization of this process.  He considered
this ``\verb+blob+'' 
to be a new vertex; we think of it as {\em containing} the 
original two vertices being identified.  Each incoming edge actually points
not at the \verb+blob+ as a whole but rather at its original terminal vertex
within the \verb+blob+.

Here, we set our edge weights to $W(i\to j)=b_j$ for all edges.
and look at an example: 

\begin{picture}(100,65)
\put (50,0){0}
\put(25,25){3}
\put(75,25){1}
\put(50,50){2}
\put(30,22){\vector(1,-1){16}}
\put(75,22){\vector(-1,-1){16}}
\put(35,30){\vector(1,0){35}}
\put(50,50){\vector(-1,-1){16}}
\put(52,50){\vector(0,-1){40}}
\put(56,50){\vector(1,-1){16}}
\put(78,25){$\hookleftarrow$}
\put(82,30.5){\line(1,0){7}}
\end{picture}

\noindent
Vertices $1$ and $3$ are identifiable:  each has exactly one edge to $0$
and one edge to $1$.  If we identify the two, we obtain the following graph
(with weights $w(2\to 0)=b_0$, $w(2\to 1)=b_1$, $w(2\to 3)=b_3$, 
$w(\verb+blob+\to 0)=b_0$, and $w(\verb+blob+\to 1)=b_1$):

\begin{picture}(120,65)
\put (50,0){0}
\put(75,30){\oval(35,15)}
\put(50,50){2}
\put(65,25){3}
\put(85,25){1}
\put(75,23){\vector(-1,-1){18}}
\put(53,50){\vector(1,-2){10}}
\put(52,50){\vector(0,-1){40}}
\put(53,50){\vector(3,-2){32}}
\put(88,25){$\hookleftarrow$}
\put(92,30.5){\line(1,0){7}}
\end{picture}

\noindent
In this graph, 2 and \verb+blob+ are not identifiable because 2 has 
an edge to 3, while there is no loop $\verb+blob+\to 3$.

In the complete digraph with loops, 
all vertices are identifiable.  This was why we altered the matrix to allow
for loops.
Orlin used this idea
to manipulate formulas to get the formula $(n+1)^{n-1}$ for the number of 
trees. 
We examine the 
relationship between this idea and the matrix method.

For the moment we illustrate the process with $n=3$.
We begin with the complete directed graph.  The $4\times 4$ matrix 
corresponding to it, where the edges are weighted by indeterminates
indexed by the terminal vertex, is
\[
\hat{D}-\hat{A}=\Upsilon_0=\begin{bmatrix}
B-b_0 & -b_1 & -b_2 & -b_3\\
-b_0 & B-b_1 & -b_2 & -b_3\\
-b_0 & -b_1 & B-b_2 & -b_3\\
-b_0 & -b_1 & -b_2 & B-b_3
\end{bmatrix}.
\]
Once we identify vertices $2$ and $3$, the graph looks like this (omitting
edges whose initial vertex is 0, since they never appear in a tree and we
will not be identifying vertex 0 with any of the others).

\begin{picture}(150,75)
\put(50,0){0}
\put(25,25){3}
\put(12,25){$\hookrightarrow$}
\put(14,30.5){\line(1,0){6.5}}
\put(50,37){1}
\put(53,37){$\hookleftarrow$}
\put(57,42.5){\line(1,0){7}}
\put(25,50){2}
\put(12,50){$\hookrightarrow$}
\put(14,55.5){\line(1,0){6.5}}
\put(28,40){\oval(15,45)}      
\put(35,21){\vector(1,-1){12}} 
\put(53,35){\vector(0,-1){26}}
\put(50,43){\vector(-2,1){20}} 
\put(50,39){\vector(-2,-1){20}} 
\put(35.5,41){\vector(1,0){16}}  
\end{picture}

\noindent
The corresponding matrix is
\[
\Upsilon_1=\begin{bmatrix}
B-b_0 & -b_1 & -b_2-b_3\\
-b_0 & B-b_1 & -b_2-b_3\\
-b_0 & -b_1 & B-b_2-b_3
\end{bmatrix}.
\]
The proper way to think of this is that there are two relevant
rows (the zeroth row, representing edges from $0$, is not relevant);
the ``oneth'' row represents edges out of $1$ and the second represents edges
out of \verb+blob+. The zeroth column represents edges into $0$; the 
``oneth'' column (not including diagonal entries) 
represents edges into $1$, and the second represents edges into \verb+blob+.
An edge $1\to\verb+blob+$ can be either $1\to 2$ or $1\to 3$.
If we were to cancel terms in the last row, we would have only $b_0 + b_1$ 
in the diagonal entry.  The positive and negative copies of $b_2$ and $b_3$
represent the loops $\verb+blob+\to 2$ and $\verb+blob+\to 3$ respectively.

Once we have identified $1$ with \verb+blob+ (which we can do because 
both have edges to 0, 1, 2, and 3), 
the graph is (with undrawn edges from 0):

\begin{picture}(80,50)
\put(50,0){0}
\put(25,30){3}
\put(12,30){$\hookrightarrow$}
\put(14,35.5){\line(1,0){4}}
\put(50,30){2}
\put(50,40){$\curvearrowleft$}
\put(75,30){1}
\put(78,30){$\hookleftarrow$}
\put(82.5,35.5){\line(1,0){6.5}}
\put(50,34){\oval(65,12)}
\put(53,28){\vector(0,-1){18}}
\end{picture}

\noindent
The matrix corresponding to this is
\[
\Upsilon_2=\begin{bmatrix}
B-b_0 & -b_1-b_2-b_3\\
-b_0 & B-b_1-b_2-b_3\end{bmatrix} 
=
\begin{bmatrix}
B-b_0 & -b_1-b_2-b_3\\
-b_0 & b_0\end{bmatrix}. 
\]
We will be crossing out the zeroth row and column.  Thus, only one
entry appears in the part of the matrix in whose determinant we are interested.
This is true 
because now there is only one vertex besides $0$ and it only has one 
non-loop edge.

The determinants of $\Upsilon_0,\Upsilon_1,$ and $\Upsilon_2$ are related
as follows:
$b_0 B^2=\det(\Upsilon_0)=\det(\Upsilon_1)\times B=\det(\Upsilon_2)\times B^2$.
\section{The sets}

Much as we did with the Happy Code, we use the matrices in the definition
of a sequence of signed sets, but now we insert some of Orlin's ideas as well.
The sets are
$G_0,G_0',S_1,S_1',T_1,T_1',G_1,\dots,T_{n-1}',G_{n-1}, G_{n-1}',S_n$.

In this sequence, the set $G_0$ is the set of trees,
and $G_0'$ is the matrix set of arrays defined by $C_0'$.
There are more matrices than we had for the Happy Code, and extra sets in 
between.  For $1\leq i\leq n-1$,
$G_i$ is the set of ordered pairs $(\tau,\gamma)$ where $\tau$ is a 
spanning tree (rooted at $0$) on a directed graph $D_i$ (described below) 
and $\gamma$ is an
ordered $i$-tuple of $b_j$'s.  
$D_i$ is defined to be the complete digraph with 
$n-i$ vertices, where vertex $n-i$ is actually \verb+blob+ which contains
$n-i, n-i+1, \dots, n$.  The labels in \verb+blob+ are terminal vertices to
edges, but they all share the same outgoing edges; in any tree, 
\verb+blob+ has only one outgoing edge.

For $1\leq i\leq n$, $S_i$ and $S_i'$ are the sets of arrays from $R_i$ and 
$R_i'$ respectively, and $T_i$ denotes the
set of arrays from $C_i$. Finally, we use both $T_i'$ 
{\em and} $G_i'$ 
to denote the set of arrays from $C_i'$, for $0\leq i\leq n-1$. Arrays are
signed, as they were in the Happy Code.
The final set is $S_n=\{b_0\}\times B^{n-1}$ where,
in the set notation, $B$ is understood to stand for the set 
$B=\{b_0,\hdots,b_n\}$
and $B^{n-1}$ stands for the $(n-1)$-fold direct product
$B\times B\times \hdots \times B$.

As an example we list the sets for the case $n=3$.  Matrices are thought of 
as sets of arrays.  (In the graphs, edges with initial vertex 
$0$ have been omitted from the pictures, since they never appear in
a spanning tree and the zeroth row of the matrix has already been ignored.)

\begin{picture}(300,70)
\put(0,35){$G_0$=the set of rooted spanning trees of}
\put(250,0){0}
\put(225,25){3}
\put(212,25){$\hookrightarrow$}
\put(214,30.5){\line(1,0){7}}
\put(275,25){1}
\put(280,25){$\hookleftarrow$}
\put(284,30.5){\line(1,0){7}}
\put(250,50){2}
\put(237,52){$\hookrightarrow$}
\put(239,57.5){\line(1,0){7}}
\put(230,22){\vector(1,-1){16}} 
\put(275,22){\vector(-1,-1){16}}
\put(253,50){\vector(0,-1){40}} 
\put(232,35){\vector(1,1){18}}  
\put(256,52){\vector(1,-1){18}} 
\put(250,49){\vector(-1,-1){18}}
\put(275,30){\vector(-1,1){18}} 
\put(275,28){\vector(-1,0){40}} 
\put(235,32){\vector(1,0){40}}  
\end{picture}
\[
G_0' \leftrightarrow
C_0'=\begin{bmatrix} 
B-b_1 & -b_2 & -b_3\\ -b_1 & B-b_2 & -b_3\\ 
-b_1 & -b_2 & B-b_3\end{bmatrix}
\]
\[
S_1\leftrightarrow
R_1=\begin{bmatrix} 
B-b_1 & -b_2 & -b_3\\ -b_1 & B-b_2 & -b_3\\ 
-b_1+b_1 & -b_2-B+b_2& B-b_3+b_3\end{bmatrix}
\]
\[
S_1'\leftrightarrow
R_1'=\begin{bmatrix} 
B-b_1 & -b_2 & -b_3\\ -b_1 & B-b_2 & -b_3\\ 
0 & -B & B\end{bmatrix}
\]
\[
T_1\leftrightarrow
C_1=\begin{bmatrix} 
B-b_1 & -b_2-b_3 & -b_3\\ 
-b_1 & B-b_2-b_3 & -b_3\\ 
0 & -B+B & B\end{bmatrix}
\]
\[
T_1'\leftrightarrow
C_1'=\begin{bmatrix} 
B-b_1 & -b_2-b_3 & -b_3\\ 
-b_1 & B-b_2-b_3 & -b_3\\ 
0 & 0 & B\end{bmatrix}
\]

\begin{picture}(330,75)
\put(0,35){$G_1=$ the set of rooted spanning trees of}
\put(250,0){0}
\put(225,25){3}
\put(212,25){$\hookrightarrow$}
\put(214,30.5){\line(1,0){6.5}}
\put(250,37){1}
\put(253,37){$\hookleftarrow$}
\put(257,42.5){\line(1,0){7}}
\put(225,50){2}
\put(212,50){$\hookrightarrow$}
\put(214,55.5){\line(1,0){6.5}}
\put(228,40){\oval(15,45)}      
\put(235,21){\vector(1,-1){12}} 
\put(253,35){\vector(0,-1){26}}
\put(250,43){\vector(-2,1){20}} 
\put(250,39){\vector(-2,-1){20}} 
\put(235.5,41){\vector(1,0){16}}  
\put(270,35){$\times B$}
\end{picture}
\[
G_1'\leftrightarrow
C_1'=\begin{bmatrix} 
B-b_1 & -b_2-b_3 & -b_3\\ 
-b_1 & B-b_2-b_3 & -b_3\\ 
0 & 0 & B\end{bmatrix}
\]
\[
S_2\leftrightarrow
R_2=\begin{bmatrix} 
B-b_1 & -b_2-b_3 & -b_3\\ 
-b_1-B+b_1 & B-b_2-b_3+b_2+b_3 & -b_3+b_3\\ 
0 & 0 & B\end{bmatrix}
\]
\[
S_2'\leftrightarrow
R_2'=\begin{bmatrix} 
B-b_1 & -b_2-b_3 & -b_3\\ -B & B & 0\\ 0 & 0 & B
\end{bmatrix}
\]
\[
T_2\leftrightarrow
C_2=\begin{bmatrix} 
B-b_1-b_2-b_3 & -b_2-b_3 & -b_3\\ -B+B & B & 0
\\ 0 & 0 & B
\end{bmatrix}
\]
\[
T_2'\leftrightarrow
C_2'=\begin{bmatrix} b_0 & -b_2-b_3 & -b_3\\ 0 & 
\displaystyle{\sum_{j=0}^3} b_j & 0\\ 0 & 0 & \displaystyle{\sum_{j=0}^3} b_j
\end{bmatrix}
\]

\begin{picture}(330,50)
\put(0,20){$G_2=$ the set of rooted spanning trees of}
\put(250,0){0}
\put(225,30){3}
\put(212,30){$\hookrightarrow$}
\put(214,35.5){\line(1,0){4}}
\put(250,30){2}
\put(250,40){$\curvearrowleft$}
\put(275,30){1}
\put(278,30){$\hookleftarrow$}
\put(282.5,35.5){\line(1,0){6.5}}
\put(250,34){\oval(65,12)}
\put(253,28){\vector(0,-1){18}}
\put(290,20){$\times B^2$}
\end{picture}
\[
G_2'\leftrightarrow
C_2'=\begin{bmatrix} b_0 & -b_2-b_3 & -b_3\\ 0 & 
\displaystyle{\sum_{j=0}^3} b_j & 0\\ 0 & 0 & \displaystyle{\sum_{j=0}^3} b_j
\end{bmatrix}
\]
The final set is $S_3=\{b_0\}\times B\times B$, where $B=\{b_0,b_1,\dots,b_n\}$
by abuse of notation.

\section{The involutions}
Some of the involutions are 
defined similarly to the involutions we used for the Happy
Code, but there are many more of them.
\subsubsection{Defining $\mu_0'$}  
$\mu_0':G_0-G_0'\to G_0-G_0'$ maps a tree to an array from
$-G_0'$ by taking the $b_j$ in the $i^{\rm{th}}$ diagonal entry for each 
edge $i\to
j$.  The remaining elements of $-C_0'$ are matched in pairs (by toggling
the diagonality of the cycle with the largest element) according to the
bijective proof of the Matrix Tree Theorem, 
just as they were for the Happy Code.
\subsubsection{Defining $\rho_i$ for $1\leq i\leq n-1$}
For $1\leq i\leq n-1$,
$\rho_i:G_{i-1}'-S_i\to G_{i-1}'-S_i$ maps 
corresponding arrays in $G_{i-1}'$ and $-S_i$
to each other, and then takes the extra elements of $-S_i$ and matches them
up according to the row operation that took $C_{i-1}'$ to $R_i$.
If $-a\in -S_i$ and the entry in row $n-i+1$ is $+b_j$ or $-B_j$, 
then  $\rho_i(-a)=-a'\in -S_i$ where $a'$ is
obtained from $a$ by interchanging and negating rows $n-i$ and $n-i+1$.
Otherwise, $\rho_i(a)=-a$ (in $G_{i-1}'$ if $a\in -S_i$ and vice versa).
Consider what happens if we begin with an element of $G_1'$:
\[
\rho_2\left(\left[\begin{smallmatrix} B_0 & & \\ & B_1 &\\ & & B_2
\end{smallmatrix}\right]\right)=-
\left[\begin{smallmatrix} B_0 & & \\ & B_1 &\\ & & B_2
\end{smallmatrix}\right]\in -S_2.
\]
If we start with an element of $G_1'$, there is always a corresponding 
element of $-S_2$: the same array
but with a negative sign outside it.  We could also start with an element
of $-S_2$:
\[
\rho_2\left(-\left[\begin{smallmatrix} B_0 & & \\ & B_0 &\\ & & B_1
\end{smallmatrix}\right]\right)=
\left[\begin{smallmatrix} B_0 & & \\ & B_0 &\\ & & B_1
\end{smallmatrix}\right]\in G_1'.
\]
In that example, there was a corresponding element of $G_1'$.  Sometimes,
there isn't:
\[
\rho_2\left(-\left[\begin{smallmatrix} & -b_2 & \\ -B_3 & &\\ & & B_0
\end{smallmatrix}\right]\right)=-
\left[\begin{smallmatrix} B_3 & & \\ & +b_2 &\\ & & B_0
\end{smallmatrix}\right]\in -S_2.
\]
Here we switched the rows (and signs!) of the entries in the 
first and second rows
without changing the columns of these entries.  Note that the resulting
element of $-S_2$ does not have a corresponding element in $G_1'$ either
(because the $b_2$ on that diagonal is not the one from B).
It is clear that $\rho_i$ is a sign-reversing involution.

\subsubsection{Defining $\rho_i'$ for $1\leq i\leq n-1$}
$\rho_i':S_i-S_i'\to S_i-S_i'$, for
$1\leq i\leq n-1$, is defined as follows:  If $a\in S_i$ and
the entry in row $n-i+1$ is $\pm b_j$, then $\rho_i'(a)=a'\in S_i$
where $a'$ is obtained by changing the sign of the entry in row $n-i+1$
of $a$ and all other entries remain unchanged.
Otherwise, $\rho_i'(a)=-a$ (in $-S_i'$ if $a\in S_i$ and vice versa).
For example, if $n=3$ and we start with an element of $S_1$,
\[
\rho_1'\left(\left[\begin{smallmatrix}  & & -b_3\\ & B_0 & \\ -b_1 & &
\end{smallmatrix}\right]\right)=\left[\begin{smallmatrix} 
& & -b_3\\ & B_0 & \\ +b_1 & &
\end{smallmatrix}\right]\in S_1.
\]
Other elements of $S_1$ get mapped to elements of $-S_1'$ (and all
elements of $-S_1'$ get mapped to elements of $S_1$):
\[
\rho_1'\left(\left[\begin{smallmatrix} B_2 & & \\ & B_0 & \\ & & B_0
\end{smallmatrix}\right]\right)=-\left[\begin{smallmatrix} 
B_2 & & \\ & B_0 & \\ & & B_0
\end{smallmatrix}\right]\in -S_1'
\]
and
\[
\rho_1'\left(\left[\begin{smallmatrix} B_2 & & \\ & & -b_3\\ & -B_0 &
\end{smallmatrix}\right]\right)=-\left[\begin{smallmatrix} 
B_2 & & \\ & & -b_3 \\ & -B_0 &
\end{smallmatrix}\right]\in -S_1'.
\]

\subsubsection{Defining $\kappa_i$ for $1\leq i\leq n-1$}
For $1\leq i\leq n-1$, $\kappa_i:S_i'-T_i\to S_i'-T_i$ works 
similarly to $\rho_i$.  The difference is that now the column operation
is the key.  If we begin with an element of $S_1'$, we get an element of
$-T_1$:
\[
\kappa_1\left(\left[\begin{smallmatrix} B_2 & & \\ & B_0 &\\ &  & B_2
\end{smallmatrix}\right]\right)=-
\left[\begin{smallmatrix} B_2 & & \\ & B_0 &\\ & & B_2
\end{smallmatrix}\right]\in -T_1.
\]
In fact, $\kappa_i$ applied to an element of $S_i'$ always gives the
corresponding element of $T_i$:  the same array, but with a negative
sign.  The reverse sometimes happens if we apply $\kappa_i$ to an 
element of $T_i$:
\[
\kappa_1\left(-\left[\begin{smallmatrix} B_0 & & \\ & B_3 &\\ &  & B_1
\end{smallmatrix}\right]\right)=
\left[\begin{smallmatrix} B_0 & & \\ & B_3 &\\ & & B_1
\end{smallmatrix}\right]\in S_1'.
\]
However, if there is no corresponding element in $S_i'$, we switch the
columns of the entries in columns $n-i+1$ and $n-i$ (but not the signs
this time, since the column operations were addition).  For example,
\[
\kappa_1\left(-\left[\begin{smallmatrix}  & -b_3 & \\ -b_1 & &\\ &  & B_0
\end{smallmatrix}\right]\right)=-
\left[\begin{smallmatrix} & & -b_3 \\ -b_1 & &\\ & B_0 &
\end{smallmatrix}\right]\in -T_1.
\]
Here we switched the entries in columns $2$ and $3$, leaving them in their
original rows.
In general, we have:  If $a\in -T_i$ and the entry in column $n-i$ is 
$B_j$ in row $n-i+1$ or $-b_j$ where $j\geq n-i+1$, then 
$\kappa_i(a)=a'\in -T_i$ where $a'$ is obtained from $a$ by interchanging
columns $n-i$ and $n-i+1$.  For all other $a$, $\kappa_i(a)=-a$ (in $S_i$
if $a\in -T_i$ and vice versa).

\subsubsection{Defining $\kappa_i'$ for $1\leq i\leq n-2$}
$\kappa_i':T_i-T_i'\to T_i-T_i'$ works similarly to $\rho_i'$ for 
$1\leq i\leq n-1$.  If $a\in T_i$ and the entry in row $n-i+1$ is in
column $n-i$, then $\kappa_i'(a)=a'\in T_i$ where $a'$ is obtained from
$a$ by changing the sign of the entry in the $(n-i+1,n-i)$ position.
Otherwise, $\kappa_i(a)=-a$ (in $T_i$ if $a\in -T_i'$ and vice versa).
For example, starting with an element of $T_1$:
\[
\kappa_1'\left(\left[\begin{smallmatrix} B_0 & & \\ & B_2 &\\ & & B_2
\end{smallmatrix}\right]\right)=-
\left[\begin{smallmatrix} B_0 & & \\ & B_2 &\\ & & B_2
\end{smallmatrix}\right]\in -T_1'.
\]
If we start with an element of $T_1$, there are two possibilities:
either there is a corresponding array in $T_1'$ as above, or else the
array cancels via arithmetic within an entry, as in the next example:
\[
\kappa_1'\left(\left[\begin{smallmatrix}  &  & -b_3\\ -b_1 & &\\ & B_0 &
\end{smallmatrix}\right]\right)=
\left[\begin{smallmatrix} & & -b_3  \\ -b_1 & &\\ & -B_0 &
\end{smallmatrix}\right]\in T_1.
\]

\subsubsection{Defining $\kappa_{n-1}'$}
$\kappa_{n-1}':T_{n-1}-T_{n-1}'\to T_{n-1}-T_{n-1}'$ is essentially
the same as the previous $\kappa_i'$s, except that now we set $B_j=b_j$ 
and cancel in row 1.
If $a\in T_{n-1}$ and the entry in column 1 is anything other than $B_0$ in
the upper-left corner of the matrix, then $\kappa_{n-1}'(a)=a'\in T_{n-1}$
where $a'$ is obtained from $a$ by changing the sign of the entry in column
1 and making all $B_j$ lower-case.  
For all other $a$, $\kappa_{n-1}'(a)=-a'\in T_{n-1}'$ obtained by leaving
all entries the same but making $B_j$ lower-case.
\[
\kappa_2'\left(\left[\begin{smallmatrix} B_3 & &\\&B_0&\\&& B_2
\end{smallmatrix}\right]\right)=\left[\begin{smallmatrix} -b_3 &&\\&b_0&\\
&&b_2\end{smallmatrix}\right]\in T_2,
\]
\[
\kappa_2'\left(\left[\begin{smallmatrix} -b_2 & & \\ & B_0 &\\ & & B_0
\end{smallmatrix}\right]\right)=
\left[\begin{smallmatrix} b_2 & & \\ & b_0 &\\ & & b_0
\end{smallmatrix}\right]\in T_2, \text{ and}
\]
\[
\kappa_2'\left(\left[\begin{smallmatrix} 
& -b_2 &\\-B_0&&\\&&B_0
\end{smallmatrix}\right]\right)=\left[\begin{smallmatrix}&-b_2&\\
b_0&&\\&&b_0\end{smallmatrix}\right]\in T_2.
\]

\subsubsection{Defining $\mu_i$ and $\mu_i'$ for $1\leq i\leq n-1$}
$\mu_i:T_i'-G_i\to T_i'-G_i$ reads the entries of the matrix and translates
them into the digraph.  The upper-left $(n-i)\times(n-i)$ corner represents
(by the Matrix Tree Theorem) the spanning trees of $D_i$ (in fact, these
submatrices are obtained from the matrices $\Upsilon_i$ from the example in 
\S \ref{oops} by crossing out the zeroth row and column).
When $\mu_i$ is applied to an element $x$ in $T_i'$, one of two things happens.
Case 1:  if edges are drawn from $k$ to $j$ for each $b_j$ appearing in
row $k\leq n-i$ in $x$ (remembering that it is okay for $j$ to be inside 
\verb+blob+), and the resulting graph is a tree, then $\mu_i(x)$ is the
pair whose first element is that tree, and whose second element is the 
$i$-tuple found by reading down the diagonal starting at row $n-i+1$.
Case 2:  if those edges do not form a tree, then there is at least one 
cycle, and 
by moving the cycle with the largest element onto or off of the diagonal 
(according to where it 
already was), we find the element of $T_i'$ that is $\mu_i(x)$. 
(Actually, there can only be one cycle, so we don't have to worry about
which cycle to move).  It is clear that for elements $x\in T_i'$ such that
$\mu_i(x)\in T_i'$, $\mu_i$ acts as an involution. 
Define $\mu_i((\tau,\gamma))$ to be the element of $T_i'$ found by 
putting $b_j$ in the diagonal entry in row $k$ whenever there is an edge
in the tree $k\to j$, and filling in the rest of the diagonal entries 
from left to right by taking them from the code.  Then it is clear that 
$\mu_i$ acts as an involution in the rest of the cases too.

If $\mu_i(x)\in -G_i$ (in other words, it is a $-(\tau, \gamma)$ pair), 
then step $i$ of the overall procedure is finished.
For example, 
\[
\mu_1\left(\left[\begin{smallmatrix}
B_2 &&\\&B_0&\\&&B_3
\end{smallmatrix}\right]\right)=
-\left(\begin{picture}(55,20)
\put(0,0){1}
\put(5,3){\vector(1,0){20}}
\put(28,10){\oval(9,23)}
\put(25,0){2}
\put(25,12){3}
\put(33,5){\vector(1,0){15}}
\put(50,0){0}
\end{picture},(b_3)\right)\in -G_2,
\]
which signifies that step 1 is finished.  On the other hand,
\[
\mu_1\left(\left[\begin{smallmatrix}
B_3&&\\&B_1&\\&&B_0
\end{smallmatrix}\right]\right)=
\left[\begin{smallmatrix}
&-b_3&\\-b_1&&\\&&B_0
\end{smallmatrix}\right],
\]
indicating that we will have to apply several more involutions before
step 1 is done.

$\mu_i':G_i-G_i'\to G_i-G_i'$ is essentially the negative of $\mu_i$, 
since the sets $T_i'$ and $G_i'$ are identical.  Moving from $G_i$
into $G_i'$ is the beginning of the $(i+1)^{\rm th}$ step.

\subsubsection{Defining $\rho_n$}
The final involution, $\rho_n:G_{n-1}'-S_n\to G_{n-1}'-S_n$ matches arrays
from $C_{n-1}'$ to negative monomials that consist of the
entries in order from left to right, similarly to $\phi_n'$ in the 
Happy Code.  
\[
\rho_n\left(\left[\begin{smallmatrix}b_0 &&&\\&b_{k_2}&&\\&&\ddots&\\&&&b_{k_n}
\end{smallmatrix}\right]\right)=b_0 b_{k_2}\dots b_{k_n}.
\]
For example,
\[
\rho_3\left(\left[\begin{smallmatrix}b_0 & &\\ & b_3 &\\ & & b_0
\end{smallmatrix}\right]\right)=- b_0 b_3 b_0,
\]
\[
\rho_3\left(\left[\begin{smallmatrix}b_0 & &\\ & b_2 &\\ & & b_2
\end{smallmatrix}\right]\right)=- b_0 b_2^2,\text{ and}
\]
\[
\rho_3\left(-b_0 b_1 b_3\right)=
\left[\begin{smallmatrix}b_0 & &\\ & b_1 &\\ & & b_3
\end{smallmatrix}\right].
\]
In fact, since all elements of $G_{n-1}'$ are positive, and so are all 
elements of $S_n$, $\rho_n$ is a simple bijection between the elements 
of $G_{n-1}'$ and the elements of $-S_n$.

\section{How to Find the Blob Code}

We use these involutions the same way we did for the Happy Code.
\begin{theorem}
Given the sets $G_0,G_0',S_1,S_1',T_1,T_1',G_1,\dots,G_{n-1}',S_n$ 
and the sign-reversing involutions
$\mu_0',\rho_1,\rho_1',\kappa_1,\kappa_1',\mu_1,\mu_1',\dots,\kappa_{n-1}',
\mu_{n-1},\mu_{n-1}',\rho_n$,
there is a bijection between $G_0$ (the set of trees) and $S_n$ (the set 
of codes).
\end{theorem}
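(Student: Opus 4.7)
The plan is to reduce this statement directly to Lemma \ref{general}, which was already established as a consequence of iterating the Bread Lemma. So almost all of the work has been done in defining the sets and involutions; what remains is a bookkeeping check that the hypotheses of Lemma \ref{general} are satisfied by the particular sequence listed in the theorem.

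First I would verify the two boundary conditions: that $G_0$ consists only of positive elements (which is immediate, since $G_0$ was defined as the set of spanning trees rooted at $0$, each carrying a positive sign), and that $S_n = \{b_0\}\times B^{n-1}$ consists only of positive monomials (immediate from its definition, since after setting $B_j=b_j$ and doing the final cancellations in row $1$ of $C_{n-1}'$ all remaining monomials appear with positive sign).

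Next I would relabel the sequence in the theorem as $S_0,S_1,\dots,S_{k+1}$ in the notation of Lemma \ref{general}, with $S_0 = G_0$, then $G_0', S_1, S_1', T_1, T_1', G_1, G_1', S_2, S_2', T_2,\dots,G_{n-1}',$ and finally $S_{k+1}=S_n$; and correspondingly relabel the involutions $\mu_0', \rho_1, \rho_1',\kappa_1,\kappa_1',\mu_1,\mu_1',\dots,\rho_n$ as $\beta_0,\beta_1,\dots,\beta_k$. The only point to check is that each $\beta_i$ is a sign-reversing involution on the difference of the two consecutive sets it is supposed to act on. This has, in effect, already been done: each of the involutions was constructed precisely so as to pair arrays in one set with arrays in the adjacent set of opposite sign, or to pair two oppositely signed elements within the same side of the difference. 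In particular, $\mu_0'$ is sign-reversing by the bijective proof of the Matrix Tree Theorem (toggling the largest cycle's diagonality is sign-reversing, as recorded in \S\ref{toggling}); each $\rho_i$ and $\kappa_i$ is sign-reversing because the corresponding row or column operation leaves the determinant unchanged and the accompanying row-swap or column-swap flips the sign in the only place it needs to; each $\rho_i'$ and $\kappa_i'$ is sign-reversing because a single sign change within a single entry (or within a single row/column arithmetic step) produces a cancelling pair; $\mu_i$ is sign-reversing because ``toggling the largest cycle'' is sign-reversing and tree-to-pair matches a positive array to a negative pair; and $\rho_n$ is sign-reversing because it matches each positive array in $G_{n-1}'$ with the negative monomial obtained by multiplying out its diagonal.

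Having checked these ingredients, I would simply invoke Lemma \ref{general}: iterated applications of the Bread Lemma eliminate all of the intermediate sets $G_0',S_1,S_1',T_1,T_1',G_1,\dots,G_{n-1}'$ and leave a sign-reversing involution on $G_0 - S_n$. Since both $G_0$ and $S_n$ contain only positive elements, this involution must pair each element of $G_0$ with a unique element of $S_n$, which is exactly a bijection. The hardest part is really just the careful accounting to confirm that the listed involutions act on consecutive signed differences in the order stated, but once that sequence is verified the theorem follows from Lemma \ref{general} with no further computation.
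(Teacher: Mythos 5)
Your proposal is correct and follows essentially the same route as the paper: the paper's own proof simply observes that the listed sets and involutions satisfy the hypotheses of Lemma \ref{general} and invokes it to obtain the bijection between $G_0$ and $S_n$. The additional bookkeeping you supply (positivity of the endpoint sets, relabelling the chain, and confirming each involution is sign-reversing on its consecutive difference) is exactly the verification the paper leaves implicit.
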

\begin{proof}
The sets and involutions satisfy the conditions of Lemma \ref{general}.
Thus we can construct the bijection between the set of trees and the set of
codes.
\end{proof}

\subsection{An example}
To clarify the method, we
use the matrix method to construct the Blob Code for the tree
$2\to 1\to 3\to 0\in G_0$.  It will help to remember that elements in
each signed set can only have one of the involutions of types 
$\rho_i,\rho_i',\kappa_i,\kappa_i',\mu_i,\mu_i'$ applied to them,
and we always alternate between negative identity maps and our defined
involutions 
($\mu$, $\kappa$, and $\rho$, with indices and with or without primes).

\begin{center}
\begin{tabular}{c|c}
Involution & Acts on\\
\hline
$\rho_i$ & $G_{i-1}'-S_i$\\
$\rho_i'$ & $S_i-S_i'$\\
$\kappa_i$ & $S_i'-T_i$\\
$\kappa_i'$ & $T_i-T_i'$\\
$\mu_i$ & $T_i'-G_i$\\
$\mu_i'$ & $G_i-G_i'$
\end{tabular}
\end{center}

This particular example is a sort of ``worst case scenario'' for a
small graph, but for larger $n$ such a long process
would be more likely.  Coding trees
with no inversions is much easier, as is coding any tree that satisfies the 
condition that for every 
$i$ whose path to $0$ goes through some $j>i$, it also holds that 
$\verb+succ+(i)>i$.

\subsubsection{Step 1}
First we apply $\mu_0'$ to get an element of $-G_0'$:
\[
\mu_0'\left(2\to 1\to 3\to 0\right)=-\left[\begin{smallmatrix}
B_3 & &\\ & B_1 &\\ & & B_0
\end{smallmatrix}\right]\in -G_0'.
\]
Next we apply $\overline{I_{G_0'}}$:
\[
\overline{-I_{G_0'}}\left(-\left[\begin{smallmatrix}
B_3 & &\\ & B_1 &\\ & & B_0
\end{smallmatrix}\right]\right)=\left[\begin{smallmatrix}
B_3 & &\\ & B_1 &\\ & & B_0
\end{smallmatrix}\right]\in G_0'
\]
As in the example for the Happy Code, we alternate between the involutions
we defined, and the negative identity maps.  
\[
\overline{-I_{S_1}}\circ\rho_1\left(\left[\begin{smallmatrix}
B_3 & &\\ & B_1 &\\ & & B_0
\end{smallmatrix}\right]\right)=\left[\begin{smallmatrix}
B_3 & &\\ & B_1 &\\ & & B_0
\end{smallmatrix}\right]\in S_1
\]
\[
\overline{-I_{S_1'}}\circ\rho_1'\left(\left[\begin{smallmatrix}
B_3 & &\\ & B_1 &\\ & & B_0
\end{smallmatrix}\right]\right)=\left[\begin{smallmatrix}
B_3 & &\\ & B_1 &\\ & & B_0
\end{smallmatrix}\right]\in S_1'
\]
\[
\overline{-I_{T_1}}\circ\kappa_1\left(\left[\begin{smallmatrix}
B_3 & &\\ & B_1 &\\ & & B_0
\end{smallmatrix}\right]\right)=\left[\begin{smallmatrix}
B_3 & &\\ & B_1 &\\ & & B_0
\end{smallmatrix}\right]\in T_1
\]
\[
\overline{-I_{T_1'}}\circ\kappa_1'\left(\left[\begin{smallmatrix}
B_3 & &\\ & B_1 &\\ & & B_0
\end{smallmatrix}\right]\right)=\left[\begin{smallmatrix}
B_3 & &\\ & B_1 &\\ & & B_0
\end{smallmatrix}\right]\in T_1'
\]
Here is the first time the involutions do anything interesting:
\[
\overline{-I_{T_1'}}\circ\mu_1\left(\left[\begin{smallmatrix}
B_3 & &\\ & B_1 &\\ & & B_0
\end{smallmatrix}\right]\right)=-\left[\begin{smallmatrix}
&-b_3& \\ -b_1 &&\\ & & B_0
\end{smallmatrix}\right]\in -T_1'
\]
Since $\mu_1$ doesn't move us out of $T_1'$, the negative identity map
results in a move to $-T_1'$.  Note that any time we are in a negative
set, we are moving ``up'' the sequence of matrices (or stalled where
we are).  Because the array
did not correspond to a tree in the graph where $2$ and $3$ are identified,
the basic effect of $\mu_1$ at that step was 
to find the array with off-diagonal entries that cancels it in the matrix.  
From $-T_1'$, we apply $\kappa_i'$.
\[
\overline{-I_{T_1}}\circ\kappa_1'\left(-\left[\begin{smallmatrix}
&-b_3 &\\ -b_1 &&\\ & & B_0
\end{smallmatrix}\right]\right)=-\left[\begin{smallmatrix}
&-b_3& \\ -b_1 &&\\ & & B_0
\end{smallmatrix}\right]\in -T_1
\]
The next few involutions have the effect of switching columns:
\[
\overline{-I_{T_1}}\circ\kappa_1\left(-\left[\begin{smallmatrix}
&-b_3& \\ -b_1 &&\\ & & B_0
\end{smallmatrix}\right]\right)=\left[\begin{smallmatrix}
&&-b_3\\ -b_1 &&\\ & B_0&
\end{smallmatrix}\right]\in T_1
\]
\[
\overline{-I_{T_1}}\circ\kappa_1'\left(\left[\begin{smallmatrix}
&&-b_3\\ -b_1 &&\\ & B_0&
\end{smallmatrix}\right]\right)=-\left[\begin{smallmatrix}
&&-b_3\\ -b_1 &&\\ & -B_0&
\end{smallmatrix}\right]\in -T_1
\]
\[
\overline{-I_{S_1'}}\circ\kappa_1\left(-\left[\begin{smallmatrix}
&&-b_3\\ -b_1 &&\\ & -B_0&
\end{smallmatrix}\right]\right)=-\left[\begin{smallmatrix}
&&-b_3\\ -b_1 &&\\ & -B_0&
\end{smallmatrix}\right]\in -S_1'
\]
\[
\overline{-I_{S_1}}\circ\rho_1'\left(-\left[\begin{smallmatrix}
&&-b_3\\ -b_1 &&\\ & -B_0&
\end{smallmatrix}\right]\right)=-\left[\begin{smallmatrix}
&&-b_3\\ -b_1 &&\\ & -B_0&
\end{smallmatrix}\right]\in -S_1
\]
And switching rows:
\[
\overline{-I_{S_1}}\circ\rho_1\left(-\left[\begin{smallmatrix}
&&-b_3\\ -b_1 &&\\ & -B_0&
\end{smallmatrix}\right]\right)=\left[\begin{smallmatrix}
&&-b_3\\ &B_0&\\ b_1&&
\end{smallmatrix}\right]\in S_1
\]
\[
\overline{-I_{S_1}}\circ\rho_1\left(\left[\begin{smallmatrix}
&&-b_3\\ &B_0&\\ b_1&&
\end{smallmatrix}\right]\right)=-\left[\begin{smallmatrix}
&&-b_3\\ &B_0&\\ -b_1&&
\end{smallmatrix}\right]\in -S_1
\]
\[
\overline{-I_{G_0'}}\circ\rho_1\left(-\left[\begin{smallmatrix}
&&-b_3\\ &B_0&\\ -b_1&&
\end{smallmatrix}\right]\right)=-\left[\begin{smallmatrix}
&&-b_3\\ &B_0&\\ -b_1&&
\end{smallmatrix}\right]\in -G_0'
\]
We have defined the involutions in such a way that there is no passing
the set $G_i$ when moving up; we apply the Matrix Tree Theorem again
(the effect, in this case, of $\mu_0'$):
\[
\overline{-I_{G_0'}}\circ\mu_0'\left(-\left[\begin{smallmatrix}
&&-b_3\\ &B_0&\\ -b_1&&
\end{smallmatrix}\right]\right)=\left[\begin{smallmatrix}
B_3 & &\\ & B_0 &\\ & & B_1
\end{smallmatrix}\right]\in G_0'
\]
And from here on, it's easy for the rest of the step:
\[
\overline{-I_{S_1}}\circ\rho_1\left(\left[\begin{smallmatrix}
B_3 & &\\ & B_0 &\\ & & B_1
\end{smallmatrix}\right]\right)=\left[\begin{smallmatrix}
B_3 & &\\ & B_0 &\\ & & B_1
\end{smallmatrix}\right]\in S_1
\]
\[
\overline{-I_{S_1'}}\circ\rho_1'\left(\left[\begin{smallmatrix}
B_3 & &\\ & B_0 &\\ & & B_1
\end{smallmatrix}\right]\right)=\left[\begin{smallmatrix}
B_3 & &\\ & B_0 &\\ & & B_1
\end{smallmatrix}\right]\in S_1'
\]
\[
\overline{-I_{T_1}}\circ\kappa_1\left(\left[\begin{smallmatrix}
B_3 & &\\ & B_0 &\\ & & B_1
\end{smallmatrix}\right]\right)=\left[\begin{smallmatrix}
B_3 & &\\ & B_0 &\\ & & B_1
\end{smallmatrix}\right]\in T_1
\]
\[
\overline{-I_{T_1'}}\circ\kappa_1'\left(\left[\begin{smallmatrix}
B_3 & &\\ & B_0 &\\ & & B_1
\end{smallmatrix}\right]\right)=\left[\begin{smallmatrix}
B_3 & &\\ & B_0 &\\ & & B_1
\end{smallmatrix}\right]\in T_1'
\]
\[
\overline{-I_{G_1}}\circ\mu_1\left(\left[\begin{smallmatrix}
B_3 & &\\ & B_0 &\\ & & B_1
\end{smallmatrix}\right]\right)=\left(\begin{picture}(55,20)
\put(0,0){1}
\put(5,3){\vector(1,0){20}}
\put(28,10){\oval(9,21)}
\put(25,0){3}
\put(25,12){2}
\put(33,5){\vector(1,0){15}}
\put(50,0){0}
\end{picture},(1)\right)\in G_1
\]
Since we've gotten to $G_1$ and have a tree and a partial code, we are
done with this step. 

\subsubsection{Step 2}
Starting where we left off,
\[
\overline{-I_{G_1'}}\circ\mu_1'\left(\begin{picture}(55,20)
\put(0,0){1}
\put(5,3){\vector(1,0){20}}
\put(28,10){\oval(9,21)}
\put(25,0){3}
\put(25,12){2}
\put(33,5){\vector(1,0){15}}
\put(50,0){0}
\end{picture},(1)\right)=
\left[\begin{smallmatrix}
B_3 & &\\ & B_0 &\\ & & B_1
\end{smallmatrix}\right]\in G_1'
\]
\[
\overline{-I_{S_2}}\circ\rho_2\left(\left[\begin{smallmatrix}
B_3 & &\\ & B_0 &\\ & & B_1
\end{smallmatrix}\right]\right)=\left[\begin{smallmatrix}
B_3 & &\\ & B_0 &\\ & & B_1
\end{smallmatrix}\right]\in S_2
\]
\[
\overline{-I_{S_2'}}\circ\rho_2'\left(\left[\begin{smallmatrix}
B_3 & &\\ & B_0 &\\ & & B_1
\end{smallmatrix}\right]\right)=\left[\begin{smallmatrix}
B_3 & &\\ & B_0 &\\ & & B_1
\end{smallmatrix}\right]\in S_2'
\]
\[
\overline{-I_{T_2}}\circ\kappa_2\left(\left[\begin{smallmatrix}
B_3 & &\\ & B_0 &\\ & & B_1
\end{smallmatrix}\right]\right)=\left[\begin{smallmatrix}
B_3 & &\\ & B_0 &\\ & & B_1
\end{smallmatrix}\right]\in T_2
\]
Now is the first time in Step 2 that we cannot move on to the next set, because
for each of the above applications of involutions 
there was a corresponding element in the next
set.  The elements of $T_2$ can only be acted on by $\kappa_2'$.
\[
\kappa_2'\left(\left[\begin{smallmatrix}
B_3 & &\\ & B_0 &\\ & & B_1
\end{smallmatrix}\right]\right)=\left[\begin{smallmatrix}
-b_3 & &\\ & B_0 &\\ & & B_1
\end{smallmatrix}\right]\in T_2.
\]
\[
\overline{-I_{T_2}}\left(\left[\begin{smallmatrix}
-b_3 & &\\ & B_0 &\\ & & B_1
\end{smallmatrix}\right]\right)=-\left[\begin{smallmatrix}
-b_3 & &\\ & B_0 &\\ & & B_1
\end{smallmatrix}\right]\in -T_2.
\]
From $-T_2$, the involution $\kappa_2$ will either take us to $S_2$ or 
else leave us in $-T_2$ (in this case, the latter):
\[
\kappa_2\left(-\left[\begin{smallmatrix}
-b_3 & &\\ & B_0 &\\ & & B_1
\end{smallmatrix}\right]\right)=-\left[\begin{smallmatrix}
& -b_3 &\\ B_0 & &\\ & & B_1
\end{smallmatrix}\right]\in -T_2.
\]
Another application of a negative identity map is now required as part of the
algorithm of the Involution Principle.
\[
\overline{-I_{T_2}}\left(-\left[\begin{smallmatrix}
&-b_3 &\\ B_0 &&\\ & & B_1
\end{smallmatrix}\right]\right)=\left[\begin{smallmatrix}
& -b_3 &\\ B_0 & &\\ & & B_1
\end{smallmatrix}\right]\in T_2.
\]
Now we go back to the appropriate involution, $\kappa_2'$ in this case:
\[
\kappa_2'\left(\left[\begin{smallmatrix}
& -b_3 &\\ B_0 & &\\ & & B_1
\end{smallmatrix}\right]\right)=\left[\begin{smallmatrix}
& -b_3 &\\ -B_0 & &\\ & & B_1
\end{smallmatrix}\right]\in T_2.
\]
\[
\overline{-I_{T_2}}\left(\left[\begin{smallmatrix}
& -b_3 &\\ -B_0 & &\\ & & B_1
\end{smallmatrix}\right]\right)=-\left[\begin{smallmatrix}
& -b_3 &\\ -B_0 & &\\ & & B_1
\end{smallmatrix}\right]\in -T_2.
\]
\[
\overline{-I_{S_2'}}\circ\kappa_2\left(-\left[\begin{smallmatrix}
& -b_3 &\\ -B_0 & &\\ & & B_1
\end{smallmatrix}\right]\right)=-\left[\begin{smallmatrix}
& -b_3 &\\ -B_0 & &\\ & & B_1
\end{smallmatrix}\right]\in -S_2'.
\]
\[
\overline{-I_{S_2}}\circ\rho_2'\left(-\left[\begin{smallmatrix}
& -b_3 &\\ -B_0 & &\\ & & B_1
\end{smallmatrix}\right]\right)=-\left[\begin{smallmatrix}
& -b_3 &\\ -B_0 & &\\ & & B_1
\end{smallmatrix}\right]\in -S_2.
\]
Again we get stuck at a set.  The involution $\rho_2$ should either send us
to $T_1'$ or leave us where we are, and it is the latter that occurs.
\[
\rho_2\left(-\left[\begin{smallmatrix}
& -b_3 &\\ -B_0 & &\\ & & B_1
\end{smallmatrix}\right]\right)=-\left[\begin{smallmatrix} B_0 & &\\
& b_3 & \\ & & B_1\end{smallmatrix}\right]\in -S_2
\]
It is time for another negative identity map:
\[
\overline{-I_{S_2}}\left(-\left[\begin{smallmatrix} B_0 & &\\
& b_3 & \\ & & B_1\end{smallmatrix}\right]\right)=
\left[\begin{smallmatrix} B_0 & &\\
& b_3 & \\ & & B_1\end{smallmatrix}\right]\in S_2.
\]
Since we are back in $S_2$, we apply $\rho_2'$ followed by a negative
identity map:
\[
\overline{-I_{S_2'}}\circ\rho_2'\left(\left[\begin{smallmatrix} B_0 & &\\
& b_3 & \\ & & B_1\end{smallmatrix}\right]\right)=
-\left[\begin{smallmatrix} B_0 & &\\
& -b_3 & \\ & & B_1\end{smallmatrix}\right]\in -S_2
\]
\[
\overline{-I_{G_1'}}\circ\rho_2\left(-\left[\begin{smallmatrix}
B_0&&\\&-b_3&\\&&B_1\end{smallmatrix}\right]\right)=-\left[\begin{smallmatrix}
B_0&&\\&-b_3&\\&&B_1\end{smallmatrix}\right]\in -G_1'
\]
This array does not correspond to a tree because there is a loop 
$\verb+blob+\to 3$.  So we toggle the diagonality of the cycle.
\[
\overline{-I_{G_1'}}\circ\mu_1'\left(-\left[\begin{smallmatrix}
B_0&&\\&-b_3&\\&&B_1\end{smallmatrix}\right]\right)=\left[\begin{smallmatrix}
B_0&&\\&B_3&\\&&B_1\end{smallmatrix}\right]\in G_1'
\]
Now we are all set to go through to the end of the step:
\[
\overline{-I_{S_2}}\circ\rho_2\left(\left[\begin{smallmatrix}
B_0&&\\&B_3&\\&&B_1\end{smallmatrix}\right]\right)=\left[\begin{smallmatrix}
B_0&&\\&B_3&\\&&B_1\end{smallmatrix}\right]\in S_2
\]
\[
\overline{-I_{S_2'}}\circ\rho_2'\left(\left[\begin{smallmatrix}
B_0&&\\&B_3&\\&&B_1\end{smallmatrix}\right]\right)=\left[\begin{smallmatrix}
B_0&&\\&B_3&\\&&B_1\end{smallmatrix}\right]\in S_2'
\]
\[
\overline{-I_{T_2}}\circ\kappa_2\left(\left[\begin{smallmatrix}
B_0&&\\&B_3&\\&&B_1\end{smallmatrix}\right]\right)=\left[\begin{smallmatrix}
B_0&&\\&B_3&\\&&B_1\end{smallmatrix}\right]\in T_2
\]
And we continue:
\[
\overline{-I_{T_2'}}\circ\kappa_2'\left(\left[\begin{smallmatrix} B_0 & &\\
& B_3 & \\ & & B_1\end{smallmatrix}\right]\right)=
\left[\begin{smallmatrix} B_0 & &\\
& B_3 & \\ & & B_1\end{smallmatrix}\right]\in T_2'
\]
\[
\overline{-I_{G_2}}\circ\mu_2\left(\left[\begin{smallmatrix}
B_0 &&\\& B_3 &\\&&B_1\end{smallmatrix}\right]\right)=
\left(\begin{picture}(40,20)
\put(0,5){1}
\put(7,3){2}
\put(5,12){3}
\put(17,5){\vector(1,0){15}}
\put(35,0){0}
\put(8,10){\circle{20}}
\end{picture},(3,1)\right)\in G_2.
\]
We are almost done, because the last step is always considerably shorter.
\subsubsection{Step 3}
From here we have
\[
\overline{-I_{G_2'}}\circ\mu_2'\left(\begin{picture}(40,20)
\put(0,5){1}
\put(7,3){2}
\put(5,12){3}
\put(17,5){\vector(1,0){15}}
\put(35,0){0}
\put(8,10){\circle{20}}
\end{picture},(3,1)\right)=
\left[\begin{smallmatrix} b_0 & &\\
& b_3 & \\ & & b_1\end{smallmatrix}\right]\in G_2', 
\]
and finally,
\[
\rho_3\left(\left[\begin{smallmatrix} b_0 & &\\
& b_3 & \\ & & b_1\end{smallmatrix}\right]\right)=-(b_0, b_3, b_1)\in -S_3.
\]
Thus, the Blob Code for the tree $2\to 1\to 3\to 0$ is (3,1).

Notice how the Blob Code differs from the Happy Code:  we are constantly
referring back to the altered graph.  It turns out we need not use matrices 
at all.

\chapter{Tree Surgery for the Blob Code}

A related algorithm for finding the Blob Code for a tree
involves progressively identifying vertices, starting at $n$ and ending with
a blob-vertex consisting of all the vertices from $1$ to $n$.  As the blob
grows, so does the code; meanwhile, the number of edges shrinks.  
The idea, as in the matrix method, 
is that if we consider our tree to be a spanning tree within the
complete directed graph (with loops), every pair of vertices is identifiable.
We keep track of the tree in the new graph that would correspond 
to our original tree.  The difference is that now we ignore the matrices.

\section{Tree Surgery Algorithm}
The algorithm takes as its input a rooted tree
(as a set of edges) whose vertices are the labels $\{0,1,\dots,n\}$.  The 
algorithm uses 
a function $\verb+path+(x)$ that finds the path (an ordered list of 
vertices) from $x$ to 0, that is,  
\[
\verb+path+(x)=(x,\verb+succ+(x),\verb+succ+(\verb+succ+(x)),\dots,0).
\]
Other procedures used are ``remove edge''
and ``add edge.''

\begin{tabbing}
Tree Surgery algorithm for the Blob Code\\
{\bf begin}\=\\
\>$\verb+blob+\leftarrow \{n\}$\\
\>$\verb+code+\leftarrow ()$\\
\>$i\leftarrow 1$\\
\>{\bf repeat}\= \\
\>\>{\bf if} \=$\verb+path+(n-i)\cap\verb+blob+\neq\emptyset$ {\bf then}\\
\>\>\>$\verb+code+\leftarrow(\verb+succ+(n-i),\verb+code+)$\\
\>\>\>remove edge $(n-i)\to\verb+succ+(n-i)$\\
\>\>\>$\verb+blob+\leftarrow\verb+blob+\cup\{n-i\}$\\
\>\>{\bf else}\\
\>\>\>$\verb+code+\leftarrow(\verb+succ+(\verb+blob+),\verb+code+)$\\
\>\>\>remove edge $\verb+blob+\to\verb+succ+(\verb+blob+)$\\
\>\>\>add edge $\verb+blob+\to\verb+succ+(n-i)$\\
\>\>\>remove edge $(n-i)\to\verb+succ+(n-i)$\\
\>\>\>$\verb+blob+\leftarrow\verb+blob+\cup\{n-i\}$\\
\>\>$i\leftarrow i+1$\\
\>{\bf until} $i=n$\\
{\bf end.}
\end{tabbing}

\begin{ex}

\begin{picture}(100,75)
\put (50,0){0}
\put (25,25){4}
\put(30,22){\vector(1,-1){16}}
\put(75,25){3}
\put(75,22){\vector(-1,-1){16}}
\put(50,50){2}
\put(100,50){1}
\put(55,47){\vector(1,-1){16}}
\put(100,47){\vector(-1,-1){16}}
\end{picture}

\noindent
Beginning with this tree, we create a \verb+blob+ containing a single
vertex (the one with the largest label).

\vspace{10pt}
\subsubsection{Step 1}

\begin{picture}(90,75)
\put (50,0){0}
\put (25,25){4}
\put(28,28){\circle{10}}
\put(30,22){\vector(1,-1){16}}
\put(75,25){3}
\put(75,22){\vector(-1,-1){16}}
\put(50,50){2}
\put(100,50){1}
\put(55,47){\vector(1,-1){16}}
\put(100,47){\vector(-1,-1){16}}
\end{picture}

\noindent
The blob contains only the vertex $4$; $n-i=3$ and $\verb+code+ =()$.  
Does the path from $3$ to $0$ go 
through the blob?  No.  So we follow the
{\bf then} instructions.  We take \verb+succ(blob)+, which is 0, 
and put it 
at the beginning of the code, then delete that edge and add an edge from
\verb+blob+ to \verb+succ(3)+ (which is 0).  Then we delete the edge from
$3$ to $0$ and put $3$ into the blob.  The new tree is:

\begin{picture}(75,50)
\put(25,-25){0}
\put(0,0){4}
\put(28,2){\oval(65,15)}
\put(50,0){3}
\put(28,-5){\vector(0,-1){10}}
\put(25,25){2}
\put(75,25){1}
\put(30,22){\vector(1,-1){16}}
\put(75,22){\vector(-1,-1){17}}
\end{picture}

\vspace{10pt}
\subsubsection{Step 2}

$n-i=2$ and $\verb+code+ = (0)$.  Since $i<n$, we continue.  Does the path
from $2$ to $0$ go through the blob?  Yes.  We follow the {\bf else} 
in the algorithm.  Put \verb+succ(2)+, which is $3$, at the beginning 
of the code, get rid of that edge and put $2$ in the blob.

\begin{picture}(75,50)
\put(25,-25){0}
\put(0,0){4}
\put(50,0){3}
\put(28,-5){\vector(0,-1){10}}
\put(28,2){\oval(65,15)}
\put(25,0){2}
\put(75,25){1}
\put(75,22){\vector(-1,-1){17}}
\end{picture}

\vspace{10pt}
\subsubsection{Step 3}

Now $n-i=1$ and $\verb+code+ =(3,0)$.  Since $i>0$, we continue.  Does the path 
from $1$ to $0$ go through the blob?  Yes.  Prepend \verb+succ(1)+, which
is $3$ again, to the code, get rid of that edge and put $1$ in the blob.

Now we are done.  $i=n$ and $\verb+code+ = (3,3,0)$, and we stop.  
Here is the new tree:

\begin{picture}(100,50)
\put (50,0){0}
\put (15,25){4}
\put(65,25){3}
\put(53,20){\vector(0,-1){10}}
\put(53,27){\oval(90,15)}
\put(40,25){2}
\put(90,25){1}
\end{picture}

\end{ex}

To see what the tree algorithm (which doesn't even refer to matrices at all) 
has to do with the matrix method, we note that the $i^{\rm th}$ row of
the initial matrix $C_0'$ represents the possible edges out of $i$.  Thus,
a row operation that cancels most of the entries of that row obliterates
the information of what the edge out of $i$ was.  This resembles the
placing of $i$ into the blob--since there is only one edge leaving the 
blob, we no
longer know where the individual vertex $i$ was pointing. However, the
information is not entirely lost because the code-in-progress is still in
the matrix. In fact, the row operation followed by the column operation 
corresponds directly to the blobbing of vertices and adding to the code.

More specifically,
the relationship between the tree method and the matrix method is as follows:
At the end of step $i$, we are in the set $G_i$.  The matrix $C_i'$
represents the graph with vertices $n-i,\dots,n$ in the blob.  The upper-left
corner with $n-i$ rows and columns is the Matrix Tree Theorem matrix for
that graph, and the $i$ rows with nothing except $B$ on the diagonal represent
the set of possible codes-in-progress.  If the path from $n-i$ to 0 does
not pass through the \verb+blob+, we follow the {\bf else} at step 
$i$ in the tree surgery algorithm, which 
corresponds to getting to pass through matrices easily from $G_{i-1}'$ to 
$G_i$.  If it does (ie, we follow the {\bf then} at step $i$ in the tree
surgery algorithm), the matrix method will involve several bounces
up and down within the matrices between $S_i$ and $T_i'$.

\section{Tree Surgery Is A Bijection}\label{backwards}
The tree surgery method is reversible.  The inverse algorithm takes a
code $(c_1,c_2,\dots c_{n-1})$ and finds the corresponding tree:

\begin{tabbing}
Algorithm to go from Blob Code to Tree\\
{\bf begin}\=\\
\>$i\leftarrow 0$\\
\>$\verb+blob+=\{1,\dots,n\}$\\
\>$\verb+edges+=\{\verb+blob+\to 0\}$\\
\>{\bf repeat}\=\\
\>\>$i\leftarrow i+1$\\
\>\>$\verb+blob+\leftarrow\verb+blob+\setminus\{i\}$\\
\>\>{\bf if} \= $\verb+path+(c_1)\cap\verb+blob+\neq\emptyset$ {\bf then}\\
\>\>\>add edge $i\to c_1$\\
\>\>{\bf else}\\
\>\>\>add edge $i\to\verb+succ+(\verb+blob+)$\\
\>\>\>remove edge $\verb+blob+\to\verb+succ+(\verb+blob+)$\\
\>\>\>add edge $\verb+blob+\to c_1$\\
\>\>behead \verb+code+\\
\>{\bf until} $i=n-1$\\
{\bf end.}
\end{tabbing}

It is easy to check that this algorithm undoes the Blob Code algorithm,
one step at a time.  

\section{The Two Methods Give the Same Blob Code}

\begin{theorem}
The matrix method and the tree surgery method give the same Blob Code.
\end{theorem}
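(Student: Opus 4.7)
The plan is to proceed by induction on the step index $i$, maintaining the invariant that at the end of step $i$ --- meaning after the $i$-th iteration of the tree-surgery algorithm, and correspondingly after the matrix method has arrived in $G_i$ via $\mu_i$ --- both methods have produced the same pair $(\tau_i,\gamma_i)\in G_i$, consisting of a spanning tree of $D_i$ and a code of length $i$. The remark concluding the preceding section already identifies the key structural correspondence: at the end of matrix step $i$, the upper-left $(n-i)\times(n-i)$ block of $C_i'$ is (via the Matrix Tree Theorem) the matrix encoding spanning trees of $D_i$, and the remaining diagonal entries record the partial code. So it suffices to check that the transition from $(\tau_{i-1},\gamma_{i-1})$ to $(\tau_i,\gamma_i)$ is identical in the two methods.

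The base case $i=0$ is immediate: both methods start with the input tree, empty code, and $\verb+blob+=\{n\}$. For the inductive step, I would trace the matrix method's step $i$ beginning from $(\tau_{i-1},\gamma_{i-1})$. Applying $\mu_{i-1}'$ produces the array $\alpha\in -G_{i-1}'$ whose upper-left $(n-i+1)\times(n-i+1)$ diagonal encodes the edges of $\tau_{i-1}$ and whose lower $i-1$ diagonal entries record $\gamma_{i-1}$. I would then split on the two branches of the tree-surgery conditional.

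In the \textbf{then} case ($\verb+path+(n-i)\cap\verb+blob+\neq\emptyset$ in $\tau_{i-1}$), absorbing $n-i$ into \verb+blob+ produces no cycle in the identified graph, and \verb+blob+'s outgoing edge remains valid. I would verify that each of $\rho_i,\rho_i',\kappa_i,\kappa_i',\mu_i$ moves directly forward in the Involution Principle sequence (no bouncing back), and that the net effect on the matrix is to transport $b_{\verb+succ+(n-i)}$ from row $n-i$ into row $n-i+1$ as the newest code-entry, leaving the other entries unchanged. Reading the resulting array via $\mu_i$ yields exactly the \textbf{then}-branch output. In the \textbf{else} case ($\verb+path+(n-i)\cap\verb+blob+=\emptyset$), naive absorption would create a cycle, since the paths from $n-i$ and from the old \verb+blob+ to $0$ are disjoint before identification but not after. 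The matrix method detects this cycle when $\mu_i$ (or $\mu_{i-1}'$ upon re-entry) returns a non-tree array, bouncing the process back up through $T_i,T_i',S_i,S_i'$. I would trace this bounce carefully: the row-swap in $\rho_i$ combined with the cycle-toggle in $\mu_{i-1}'$ exchanges the target of \verb+blob+ with the target of $n-i$, so that on the second descent \verb+blob+'s new outgoing edge points to $\verb+succ+(n-i)$, while the old $\verb+succ+(\verb+blob+)$ is registered as the new diagonal entry at the bottom of the matrix --- which is exactly the \textbf{else}-branch output.

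The hard part will be the \textbf{else} case: verifying that the bouncing implements precisely the desired swap requires tracking which specific entries get toggled and interchanged by $\mu_{i-1}'$, $\rho_i$, and $\kappa_i$ in sequence, and ruling out any further bounces before the process finally reaches $G_i$. I expect the cleanest way to handle this is to follow the single symbol $b_{\verb+succ+(\verb+blob+)}$ through the whole bounce and confirm it ends up in the final diagonal position corresponding to the newest code entry, with the other affected symbols falling into place by elimination; the convention of toggling the cycle containing the largest vertex (together with the fact that \verb+blob+ carries the largest label in $D_{i-1}$) ensures the bounce halts after exactly one round trip.
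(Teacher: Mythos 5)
Your overall strategy is the paper's: induct on the step index, use the correspondence between $C_i'$ and pairs (spanning tree of $D_i$, partial code of length $i$), and split on the two branches of the surgery conditional. But you have attached the two branches to the wrong matrix behaviours, and this inverts the heart of the argument. Write $k=\texttt{succ}(n-i)$ and $l=\texttt{succ}(\texttt{blob})$, so the array entering step $i$ has $B_k$ in row $n-i$ and $B_l$ in row $n-i+1$. The ``smooth'' descent through $\rho_i,\rho_i',\kappa_i,\kappa_i',\mu_i$ leaves this diagonal array literally unchanged; when $\mu_i$ reads it as an element of $G_i$, row $n-i$ has become the \texttt{blob} row, so the \texttt{blob} is redirected to $k$ and the new code entry is $b_l$. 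That is the \textbf{else}-branch output, and it yields a tree precisely when the path from $n-i$ to $0$ avoids the \texttt{blob} --- i.e.\ in the \textbf{else} case. In the \textbf{then} case (path meets the \texttt{blob}), pointing the \texttt{blob} at $k$ closes a cycle through the \texttt{blob}, so $\mu_i$ returns a non-tree array and the bounce back up through $T_i', T_i, S_i', S_i$ to $G_{i-1}'$ is forced; only the combination of the $\rho_i$ row interchange with the cycle toggle of $\mu_{i-1}'$ swaps $B_k$ and $B_l$ between rows $n-i$ and $n-i+1$, producing the \textbf{then}-branch output (code entry $b_k$, \texttt{blob} edge still to $l$). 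You assert exactly the opposite: no bounce in the \textbf{then} case and a bounce in the \textbf{else} case. In particular your claimed ``net effect'' of the direct descent --- transporting $b_{\texttt{succ}(n-i)}$ into row $n-i+1$ while leaving the other entries unchanged --- cannot happen without the row swap, i.e.\ without the bounce, so the verification you propose would fail when carried out. Your cycle argument for the else case is also not right: two paths to $0$ that both avoid the \texttt{blob} and merge at the identified vertex do not create a cycle, because the enlarged \texttt{blob} retains only one outgoing edge.

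Two smaller points. The bouncing case must be split further according to whether $\texttt{succ}(n-i)$ already lies in the \texttt{blob} (the paper's Cases 2a and 2b), since the off-diagonal cycle degenerates to a loop at the \texttt{blob} in the latter situation and the displaced entries travel through different rows. And the bounce does not halt because of the largest-vertex toggling convention: in the Blob construction there is only ever one cycle available to toggle, so that convention plays no role here (in contrast to the Happy Code, where Lemma 4 is needed precisely to control repeated toggles).
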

\begin{proof}
We assume constant $n$ and proceed by induction on the number of steps
$i$ taken so far.  The base case is $i=0$, the zeroth step.  
Before we do anything (using either method), we have a tree and an 
empty code.  We consider the vertex $n$ to be a \verb+blob+ containing
only one label ($n$).  At the end of the $0^{\rm th}$ step, both methods
have the same code-in-progress (namely, an empty code) and the same tree.

Now we assume that at the end of the $(i-1)^{\rm th}$ step, the two
methods result in the same tree and code-in-progress.

At the beginning of step $i$, each method has a pair consisting of a
tree with a \verb+blob+ as one of the vertices and a partial code of length
$(i-1)$.  The \verb+blob+ contains $n-i+1,n-i+2,\dots,n$, so its size is $i$. 

The matrix method requires following the involutions through sets of
arrays.  Rows $n-i$ through $n-i+1$ look like this in the
sequence of matrices:
\begin{multline*}
C_{i-1}'=\\
\left[\begin{array}{r|ccccccc} 
&1&\dots&n-i&n-i+1&n-i+2&\dots&n\\
\hline\\
\vdots&\vdots&\ddots&\vdots&\vdots&\vdots&&\vdots\\
n-i&-b_1&\dots&B-b_{n-i}&-\displaystyle{\sum_{n-i+1}^n b_k}&
 -\displaystyle{\sum_{n-i+2}^n b_k}&\dots&-b_n\\
n-i+1&-b_1&\dots&-b_{n-i}&B-\displaystyle{\sum_{n-i+1}^n b_k}&
 -\displaystyle{\sum_{n-i+2}^n b_k}&\dots&-b_n\\
n-i+2&0&\dots&0&0&B&\dots&0\\
\vdots&\vdots&&\vdots&\vdots&\vdots&\ddots&\vdots
\end{array}\right]
\end{multline*}
\begin{multline*}
R_i=\\
{\footnotesize\begin{bmatrix}
&\vdots&\vdots&\vdots&&\vdots\\
\dots&B-b_{n-i}&-\displaystyle{\sum_{n-i+1}^n b_k}&
 -\displaystyle{\sum_{n-i+2}^n b_k}&\dots&-b_n\\
\dots&-b_{n-i}-B+b_{n-i}&B-\displaystyle{\sum_{n-i+1}^n b_k}
 +\displaystyle{\sum_{n-i+1}^n b_k}&-\displaystyle{\sum_{n-i+2}^n b_k}+
 \displaystyle{\sum_{n-i+2}^n b_k}&\dots&-b_n+b_n\\
\dots&0&0&B&\dots&0\\
&\vdots&\vdots&\vdots&&\vdots
\end{bmatrix}}
\end{multline*}
\[
R_i'=\begin{bmatrix}
\vdots&&\vdots&\vdots&\vdots&&\vdots\\
-b_1&\dots&B-b_{n-i}&-\displaystyle{\sum_{n-i+1}^n b_k}&
 -\displaystyle{\sum_{n-i+2}^n b_k}&\dots&-b_n\\
0&\dots&-B&B&0&\dots&0\\
0&\dots&0&0&B&\dots&0\\
\vdots&&\vdots&\vdots&\vdots&&\vdots
\end{bmatrix}
\]
\[
C_i=\begin{bmatrix}
\vdots&&\vdots&\vdots&\vdots&&\vdots\\
-b_1&\dots&B-b_{n-i}-\displaystyle{\sum_{n-i+1}^n b_k}&
 -\displaystyle{\sum_{n-i+1}^n b_k}&
 -\displaystyle{\sum_{n-i+2}^n b_k}&\dots&-b_n\\
0&\dots&-B+B&B&0&\dots&0\\
0&\dots&0&0&B&\dots&0\\
\vdots&&\vdots&\vdots&\vdots&&\vdots
\end{bmatrix}
\]
\[
C_i'=\begin{bmatrix}
\vdots&&\vdots&\vdots&\vdots&&\vdots\\
-b_1&\dots&B-\displaystyle{\sum_{n-i}^n b_k}&
 -\displaystyle{\sum_{n-i+1}^n b_k}&
 -\displaystyle{\sum_{n-i+2}^n b_k}&\dots&-b_n\\
0&\dots&0&B&0&\dots&0\\
0&\dots&0&0&B&\dots&0\\
\vdots&&\vdots&\vdots&\vdots&&\vdots
\end{bmatrix}
\]
At step $i$ in the matrix method, we are dealing with the sets $G_{i-1}'$ (the
set of trees (with a \verb+blob+ containing $i$ labels) and partial codes of
length $i-1$), $S_i,S_i',T_i,T_i'$ (the sets of arrays in the matrices above, 
respectively), and $G_i$ (the set of trees with a \verb+blob+ containing $i+1$
labels together with partial codes of length $i$).

Suppose we are at the start of step $i$.  
This means that no matter which method
we are using, we have a tree and a partial code.  
Let $\verb+succ+(\verb+blob+)=l$ and $\verb+succ+(n-i)=k$.  
Also suppose that the first element in the partial code is
$b_m$.  Note that since we have a tree, $l\leq n-i$ because all
vertices with labels greater than $n-i$ are in the \verb+blob+.

An application of $\overline{-I_{S_i}}\circ\rho_i$ leaves us with 
$\left[\begin{smallmatrix}\ddots&&&&\\&B_k&&&\\&&B_l&&\\&&&B_m&\\&&&&\ddots
\end{smallmatrix}\right]\in S_i$.
\[
\overline{-I_{S_i'}}\circ\rho_i'\left(
\left[\begin{smallmatrix}\ddots&&&&\\&B_k&&&\\&&B_l&&\\&&&B_m&\\&&&&\ddots
\end{smallmatrix}\right]\right)=
\left[\begin{smallmatrix}\ddots&&&&\\&B_k&&&\\&&B_l&&\\&&&B_m&\\&&&&\ddots
\end{smallmatrix}\right]\in S_i'.
\]
\[
\overline{-I_{T_i}}\circ\kappa_i\left(
\left[\begin{smallmatrix}\ddots&&&&\\&B_k&&&\\&&B_l&&\\&&&B_m&\\&&&&\ddots
\end{smallmatrix}\right]\right)=
\left[\begin{smallmatrix}\ddots&&&&\\&B_k&&&\\&&B_l&&\\&&&B_m&\\&&&&\ddots
\end{smallmatrix}\right]\in T_i.
\]
Note that these positive capitalized entries 
on the diagonal do not disappear from the 
matrices.
\[
\overline{-I_{T_i'}}\circ\kappa_i'\left(
\left[\begin{smallmatrix}\ddots&&&&\\&B_k&&&\\&&B_l&&\\&&&B_m&\\&&&&\ddots
\end{smallmatrix}\right]\right)=
\left[\begin{smallmatrix}\ddots&&&&\\&B_k&&&\\&&B_l&&\\&&&B_m&\\&&&&\ddots
\end{smallmatrix}\right]\in T_i'.
\]
Next we will be applying $\overline{-I_{G_i}}\circ\mu_i$, and there are 
two possible outcomes.

\underline{Case 1}
Consider the case where the path from $n-i$ to 0 does not go through the
\verb+blob+ (that is, $k$ is not inverted in the original tree).  
If the path from $k$ to $0$ does not pass through the \verb+blob+, then 
$\overline{-I_{G_i}}\circ\mu_i=(\tau,\gamma)$ where $\gamma$ 
is the code from $G_{i-1}$
with $b_l$ prepended to it and $\tau$ is a tree containing the
same edges as the tree from $G_{i-1}$ with the following exceptions:
$n-i\to k$ has been deleted, $n-i$ has been added to the \verb+blob+, and
the edge $\verb+blob+\to l$ has been replaced by the edge $\verb+blob+\to k$.
This is a tree because if the path from $n-i$ to 0 does not pass through
the \verb+blob+, then moving the \verb+blob+ to the position where $n-i$
was does not create a cycle.

Note that the effect is exactly the same as the result of
the tree surgery method.  Tree surgery would have removed and
added exactly those same edges, and prepended the same label to the code.

\underline{Case 2}
This is the more complicated case.  Here, when we apply $\mu_i$, we don't
get a tree because a cycle would be created (the path from $n-i$ to 0 goes
through the \verb+blob+, but now $n-i$ should be in the \verb+blob+ with
$\verb+succ+(\verb+blob+)=k$.  Hence there is a cycle containing \verb+blob+
and other vertices all of whose labels are less than $n-i$).  Thus,
$\overline{-I_{T_i'}}\circ\mu_i\left(\left[\begin{smallmatrix}\ddots&&&&\\
&B_k&&&\\&&B_l&&\\&&&B_m&\\&&&&\ddots\end{smallmatrix}\right]\right)$ is 
a negative element in $T_i'$ 
with all entries that correspond to edges in the cycle moved off 
the diagonal.  In this matrix, row $n-i$ contains $-b_k$ in the 
$k^{\rm th}$ column;
the rest of the off-diagonal entries are higher up in the matrix, including
some unique
entry in the $n-i$ column (say $b_r$, where $r\geq n-i$; this corresponds
to an edge into \verb+blob+).  If $k>n-i$ (that is, 
$\verb+succ+(n-i)\in\verb+blob+$), then the matrix will look a little 
different than the one below; we will deal with that case later.

\underline{Case 2a} If $k<n-i$ ($k\neq n-i$ because then we would have a loop
in the tree at the start of the step), we have    
\[
\overline{-I_{T_i'}}\circ\mu_i\left(\left[\begin{smallmatrix}\ddots&&&&\\
&B_k&&&\\&&B_l&&\\&&&B_m&\\&&&&\ddots\end{smallmatrix}\right]\right)=
-\left[\begin{smallmatrix}
\ddots&&&&&&\\
&&&-b_r&&&\\
&&\ddots&&&&\\
&-b_k&&&&\\
&&&&B_l&&\\
&&&&&B_m&\\
&&&&&&\ddots
\end{smallmatrix}\right]\in -T_i'
\]
Note that this $-b_r$ represents an edge into the \verb+blob+ and thus
$r$ can be any label greater than or equal to $n-i$.  Also, there may
be many vertices in the cycle that is now off the diagonal.

$\overline{-I_{T_i}}\circ\kappa_i'$ of this
gives the same array in $-T_i$. However, $\overline{-I_{T_i}}\circ\kappa_i$ of
{\em that} switches the entries in columns $n-i$ and $n-i+1$, leaving us 
with an element of $T_i$
because $R_i'$ only has $-b_{n-i}$ above the diagonal in column $n-i$.  
In some row above $n-i$, our array in $T_i$ has $-b_r$ in column 
$n-i+1$; it also has $B_l$ in the $(n-i+1,n-i)$ position; nothing else has 
moved (the $(n-i,k)$ position contains $-b_k$).  
\[
\overline{-I_{T_i}}\circ\kappa_i\left(-\left[
\begin{smallmatrix}
\ddots&&&&&&\\
&&&-b_r&&&\\
&&\ddots&&&&\\
&-b_k&&&&\\
&&&&B_l&&\\
&&&&&B_m&\\
&&&&&&\ddots
\end{smallmatrix}\right]\right)=
\left[\begin{smallmatrix}
\ddots&&&&&&\\
&&&&-b_r&&\\
&&\ddots&&&&\\
&-b_k&&&&\\
&&&B_l&&&\\
&&&&&B_m&\\
&&&&&&\ddots
\end{smallmatrix}\right]
\]
$\overline{-I_{T_i}}\circ\kappa_i'$ changes the sign of the $B_l$ in row 
$n-i+1$, leaving us in $-T_i$.  This new array appears in $-S_i'$ and $-S_i$
too: $\overline{-I_{S_i'}}\circ\kappa_i$ takes us to $-S_i'$ and 
$\overline{-I_{S_i}}\circ\rho_i'$ takes us to $-S_i$.  
\begin{multline*}
\overline{-I_{T_i}}\circ\kappa_i'\left(\left[\begin{smallmatrix}
\ddots&&&&&&\\
&&&&-b_r&&\\
&&\ddots&&&&\\
&-b_k&&&&\\
&&&B_l&&&\\
&&&&&B_m&\\
&&&&&&\ddots
\end{smallmatrix}\right]\right)=\\-
\left[\begin{smallmatrix}
\ddots&&&&&&\\
&&&&-b_r&&\\
&&\ddots&&&&\\
&-b_k&&&&\\
&&&-B_l&&&\\
&&&&&B_m&\\
&&&&&&\ddots
\end{smallmatrix}\right]\in -T_i;
\end{multline*}
\begin{multline*}
\overline{-I_{S_i'}}\circ\kappa_i\left(-\left[\begin{smallmatrix}
\ddots&&&&&&\\
&&&&-b_r&&\\
&&\ddots&&&&\\
&-b_k&&&&\\
&&&-B_l&&&\\
&&&&&B_m&\\
&&&&&&\ddots
\end{smallmatrix}\right]\right)=\\-
\left[\begin{smallmatrix}
\ddots&&&&&&\\
&&&&-b_r&&\\
&&\ddots&&&&\\
&-b_k&&&&\\
&&&-B_l&&&\\
&&&&&B_m&\\
&&&&&&\ddots
\end{smallmatrix}\right]\in -S_i';
\end{multline*}
and
\begin{multline*}
\overline{-I_{S_i}}\circ\rho_i'\left(-\left[\begin{smallmatrix}
\ddots&&&&&&\\
&&&&-b_r&&\\
&&\ddots&&&&\\
&-b_k&&&&\\
&&&-B_l&&&\\
&&&&&B_m&\\
&&&&&&\ddots
\end{smallmatrix}\right]\right)=\\
-\left[\begin{smallmatrix}
\ddots&&&&&&\\
&&&&-b_r&&\\
&&\ddots&&&&\\
&-b_k&&&&\\
&&&-B_l&&&\\
&&&&&B_m&\\
&&&&&&\ddots
\end{smallmatrix}\right]\in -S_i.
\end{multline*}
Now we will end
up switching the entries in rows $n-i$ and $n-i+1$:
$\overline{-I_{S_i}}\circ\rho_i$ has this effect, with the result that our
new array in $S_i$ has $B_l$ in the $(n-1)^{\rm th}$ diagonal entry and $b_k$
in the $(n-i+1,k)$ position.  
\[
\overline{-I_{S_i}}\circ\rho_i\left(-\left[\begin{smallmatrix}
\ddots&&&&&&\\
&&&&-b_r&&\\
&&\ddots&&&&\\
&-b_k&&&&\\
&&&-B_l&&&\\
&&&&&B_m&\\
&&&&&&\ddots
\end{smallmatrix}\right]\right)=
\left[\begin{smallmatrix}
\ddots&&&&&&\\
&&&&-b_r&&\\
&&\ddots&&&&\\
&&&B_l&&\\
&b_k&&&&&\\
&&&&&B_m&\\
&&&&&&\ddots
\end{smallmatrix}\right]\in S_i.
\]
An application of $\overline{-I_{S_i}}\circ\rho_i'$
changes the sign of the $b_k$ in row $n-i+1$, putting us in $-S_i$:  
\[
\overline{-I_{S_i}}\circ\rho_i'\left(\left[\begin{smallmatrix}
\ddots&&&&&&\\
&&&&-b_r&&\\
&&\ddots&&&&\\
&&&B_l&&\\
&b_k&&&&&\\
&&&&&B_m&\\
&&&&&&\ddots
\end{smallmatrix}\right]\right)=-\left[\begin{smallmatrix}
\ddots&&&&&&\\
&&&&-b_r&&\\
&&\ddots&&&&\\
&&&B_l&&\\
&-b_k&&&&&\\
&&&&&B_m&\\
&&&&&&\ddots
\end{smallmatrix}\right]\in -S_i.
\]
This same
array appears in $-G_{i-1}'$ and is what we get by applying 
$\overline{-I_{G_{i-1}'}}\circ\rho_i$. Now when we apply 
$\overline{-I_{G_{i-1}'}}\circ\mu_{i-1}'$ we have a different cycle.
Here, the graph in question has edges $\verb+blob+\to k$ and $(n-i)\to l$
instead of vice versa.  The off-diagonal entries must correspond to a 
cycle, so we move the cycle back onto the diagonal, landing in $G_{i-1}'$.
\[
\overline{-I_{G_{i-1}'}}\circ\mu_{i-1}'\left(-\left[\begin{smallmatrix}
\ddots&&&&&&\\
&&&&-b_r&&\\
&&\ddots&&&&\\
&&&B_l&&\\
&-b_k&&&&&\\
&&&&&B_m&\\
&&&&&&\ddots
\end{smallmatrix}\right]\right)=
\left[\begin{smallmatrix}
\ddots&&&&\\
&B_l&&&\\
&&B_k&&\\
&&&B_m&\\
&&&&\ddots
\end{smallmatrix}\right].
\]

Note that the only way this array differs from the one we started with at the
very beginning of step $i$ is that the entries in rows $n-i$ and $n-i+1$ have
been interchanged.  

Now when we apply $\rho_i,\rho_i',\kappa_i,\kappa_i'$ 
with the appropriate negative identity maps in between, we eventually reach
\[
\left[\begin{smallmatrix}\ddots&&&&\\&B_l&&&\\&&B_k&&\\&&&B_m&\\&&&&\ddots
\end{smallmatrix}\right]\in T_i',
\]
and then
\[
\overline{-I_{G_i}}\circ\mu_i\left(
\left[\begin{smallmatrix}\ddots&&&&\\&B_l&&&\\&&B_k&&\\&&&B_m&\\&&&&\ddots
\end{smallmatrix}\right]\right),
\]
which is a tree with edge $\verb+blob+\to l$ (where $n-i$ is now in 
the \verb+blob+) 
together with a code beginning with $(b_k,b_m,\dots)$.
Since the tree surgery method would have deleted the edge from $n-i$ to $k$,
placed $n-i$ in the \verb+blob+, prepended $b_k$ to the code
and left the edge $\verb+blob+\to l$, the
matrix method had exactly the same effect.  

\underline{Case 2b}
Here we treat separately the case where $\verb+succ+(n-i)\in\verb+blob+$.
In this case, we have
\[
\overline{-I_{T_i'}}\circ\mu_i\left(\left[\begin{smallmatrix}\ddots&&&&\\
&B_k&&&\\&&B_l&&\\&&&B_m&\\&&&&\ddots\end{smallmatrix}\right]\right)=
-\left[\begin{smallmatrix}
\ddots&&&&\\&-b_k&&&\\&&B_l&&\\&&&B_m&\\&&&&\ddots
\end{smallmatrix}\right]\in -T_i'.
\]
\[
\overline{-I_{T_i}}\circ\kappa_i'\left(-\left[\begin{smallmatrix}
\ddots&&&&\\&-b_k&&&\\&&B_l&&\\&&&B_m&\\&&&&\ddots
\end{smallmatrix}\right]\right)=
-\left[\begin{smallmatrix}
\ddots&&&&\\&-b_k&&&\\&&B_l&&\\&&&B_m&\\&&&&\ddots
\end{smallmatrix}\right]\in -T_i.
\]
Now $\kappa_i$ will switch the columns of two of the entries.  
\[
\overline{-I_{T_i}}\circ\kappa_i\left(-\left[\begin{smallmatrix}
\ddots&&&&\\&-b_k&&&\\&&B_l&&\\&&&B_m&\\&&&&\ddots
\end{smallmatrix}\right]\right)=
\left[\begin{smallmatrix}
\ddots&&&&\\&&-b_k&&\\&B_l&&&\\&&&B_m&\\&&&&\ddots
\end{smallmatrix}\right]\in T_i.
\]
\[
\overline{-I_{T_i}}\circ\kappa_i'\left(\left[\begin{smallmatrix}
\ddots&&&&\\&&-b_k&&\\&B_l&&&\\&&&B_m&\\&&&&\ddots
\end{smallmatrix}\right]\right)=
-\left[\begin{smallmatrix}
\ddots&&&&\\&&-b_k&&\\&-B_l&&&\\&&&B_m&\\&&&&\ddots
\end{smallmatrix}\right]\in -T_i.
\]
$\kappa_i'$ is defined to change the sign of the entry $B_l$ in row
$n-i+1$, but nothing else in the array changes.  This element also 
occurs in the sets $-S_i'$ and $-S_i$, so
\[
\overline{-I_{S_i}}\circ\rho_i'\overline{-I_{S_i'}}\circ\kappa_i\left(
-\left[\begin{smallmatrix}
\ddots&&&&\\&&-b_k&&\\&-B_l&&&\\&&&B_m&\\&&&&\ddots
\end{smallmatrix}\right]\right)=-\left[\begin{smallmatrix}
\ddots&&&&\\&&-b_k&&\\&-B_l&&&\\&&&B_m&\\&&&&\ddots
\end{smallmatrix}\right]\in -S_i.
\]
In Case 2a we actually made it all the way up to the set
$G_{i-1}'$, but this time we do not; the next thing that happens
is that the entries in rows $n-i$ and $n-i+1$ are interchanged,
with the requisite sign changes:
\[
\overline{-I_{S_i}}\circ\rho_i\left(-\left[\begin{smallmatrix}
\ddots&&&&\\&&-b_k&&\\&-B_l&&&\\&&&B_m&\\&&&&\ddots
\end{smallmatrix}\right]\right)=\left[\begin{smallmatrix}
\ddots&&&&\\&B_l&&&\\&&b_k&&\\&&&B_m&\\&&&&\ddots
\end{smallmatrix}\right]\in S_i.
\]
\[
\overline{-I_{S_i}}\circ\rho_i'\left(\left[\begin{smallmatrix}
\ddots&&&&\\&B_l&&&\\&&b_k&&\\&&&B_m&\\&&&&\ddots
\end{smallmatrix}\right]\right)=-\left[\begin{smallmatrix}
\ddots&&&&\\&B_l&&&\\&&-b_k&&\\&&&B_m&\\&&&&\ddots
\end{smallmatrix}\right]\in -S_i.
\]
\[
\overline{-I_{G_{i-1}'}}\circ\rho_i\left(-\left[\begin{smallmatrix}
\ddots&&&&\\&B_l&&&\\&&-b_k&&\\&&&B_m&\\&&&&\ddots
\end{smallmatrix}\right]\right)=-\left[\begin{smallmatrix}
\ddots&&&&\\&B_l&&&\\&&-b_k&&\\&&&B_m&\\&&&&\ddots
\end{smallmatrix}\right]\in -G_{i-1}'
\]
This array does not correspond to a tree because there is a loop
$\verb+blob+\to k$
\[
\overline{-I_{G_{i-1}'}}\circ\mu_{i-1}'\left(-\left[\begin{smallmatrix}
\ddots&&&&\\&B_l&&&\\&&-b_k&&\\&&&B_m&\\&&&&\ddots
\end{smallmatrix}\right]\right)=\left[\begin{smallmatrix}
\ddots&&&&\\&B_l&&&\\&&B_k&&\\&&&B_m&\\&&&&\ddots
\end{smallmatrix}\right]\in G_{i-1}'
\]

Now we can go ahead and apply (with the obvious negative identity maps
in between) $\rho_i, \rho_i', \kappa_i,$ and $\kappa_i'$, eventually ending up
with this same array in the set $T_i'$.  All of this had exactly the
same effect that the manipulations in Case 2a did--namely, we interchanged
the two entries on the diagonal, switching $b_k$ with $b_l$.  The same 
argument we used above shows that this had the same effect as the tree
surgery method.

Since we have accounted for all possible cases, we conclude that these two 
methods give the same code at step $i$.  Thus at step $n$ the effect of the
two methods is the same, so by induction the Blob Code can be found using
either method.
\end{proof}

\chapter{Tree Surgery for the Happy Code}

Considering that the matrix method did not refer back to the graph at
each step, it is surprising that there is a purely bijective method for
finding the Happy Code.
In fact, we do have another form of tree surgery for the Happy Code, so we
can avoid resorting to matrices and involutions.

\section{Tree Surgery Algorithm}
Begin by finding the path from $1$ to 0.  The
method consists of deleting $\verb+succ+(1)$ from the path and moving it to
a separate connected component of the graph, and forming a cycle with it, then
repeating the process.  The algorithm corresponds directly to
the matrix/involution algorithm of chapter \ref{happyhell}. 
The algorithm below takes as its input a tree in the form of a set of
edges.

\begin{tabbing}
The Tree Surgery Algorithm for Happy Code\\
{\bf begin}\=\\
\>$J\leftarrow\verb+succ+(1)$\\
\>{\bf if} $J\neq 0$ \={\bf then}\\
\>\>{\bf repeat}\=\\
\>\>\>$j\leftarrow\verb+succ+(1)$\\
\>\>\>remove edge $1\to j$\\
\>\>\>add edge $1\to\verb+succ+(j)$\\
\>\>\>{\bf if} $j\geq J$ \= {\bf then}\\
\>\>\>\>add edge $j\to j$\\
\>\>\>\>$J\leftarrow j$\\
\>\>\>{\bf else}\\
\>\>\>\>add edge $j\to\verb+succ+(J)$\\
\>\>\>\>remove edge $J\to\verb+succ+(J)$\\
\>\>\>\>add edge $J\to j$\\
\>\>{\bf until} $\verb+succ+(1)=0$\\
\>{\bf else}\\
\>\>\{the Happy Code is practically the same as the na\"\i ve code\}\\
\>$\verb+code+\leftarrow(\verb+succ+(2),\verb+succ+(3),\dots,\verb+succ+(n))$\\
{\bf end.}
\end{tabbing}

\noindent
This algorithm turns out to be essentially equivalent to the matrix method
shown in Chapter \ref{happyhell}.  

\section{An Example}
Consider the tree $1\to 3\to 2\to 4\to 0$.  Step 1:  pull $\verb+succ+(1)=3$ 
out of the
path from $1$ to $0$ and put it in a cycle.

\vspace{10pt}
\begin{picture}(50,75)
\put(20,75){1}
\put(23,73){\vector(0,-1){15}}
\put(20,50){2}
\put(23,48){\vector(0,-1){15}}
\put(20,25){4}
\put(23,23){\vector(0,-1){15}}
\put(20,0){0}
\put(40,38){3}
\put(45,38){$\hookleftarrow$}
\put(49,43.5){\line(1,0){7}}
\end{picture}

One nice thing about the Happy Code is that we don't have to keep track of
the code as we go; we just read it off at the end.  Step 2:  pull 2 
(the new $\verb+succ+(1)$) out of
the path from $1$ to $0$ and put it in a cycle.  Since it is not the largest
vertex in a cycle, we insert it after the largest (which is $3$).

\begin{picture}(60,65)
\put(20,50){1}
\put(23,48){\vector(0,-1){15}}
\put(20,25){4}
\put(23,23){\vector(0,-1){15}}
\put(20,0){0}
\put(38,38){3}
\put(55,38){2}
\put(44,42){\vector(1,0){10}}
\put(55,39){\vector(-1,0){10}}
\end{picture}

The last step is to pull $4$ out of the path from $1$ to $0$; it gets a loop
because it is the largest element of the cycles.

\begin{picture}(60,65)
\put(20,30){1}
\put(23,28){\vector(0,-1){15}}
\put(20,5){0}
\put(40,15){4}
\put(45,15){$\hookleftarrow$}
\put(49,20.5){\line(1,0){7}}
\put(38,38){3}
\put(55,38){2}
\put(44,42){\vector(1,0){10}}
\put(55,39){\vector(-1,0){10}}
\end{picture}

Now we can write down, in order, the successors of $2,3,4$ to find the code:
$(3,2,4)$.  Notice how much faster the tree surgery procedure is!  Also, it
is nice to know that it would be even faster if the path from $1$ to $0$ were
shorter.  Another nice feature of this method is that we no longer have to
keep track of the code as we go; instead, we find it directly once we have
finished performing surgery on the tree. The weight of the happy functional
digraph at the end of the process is equal to the weight of the original 
tree.  

If the tree were branchier, the method would not be any more complicated.
Edges that are not part of the path from 1 to 0 are not affected by tree
surgery; at the end of the surgical procedures the code is the list of the
respective successors of all vertices $\geq 2$.

This tree surgery method is related to Joyal's proof that there are 
$(n+1)^{n-1}$ trees.  See \S\ref{joy} for a discussion.
\section{Tree surgery is a bijection}
Again, there is a simple inverse for the Happy Code tree surgery.  We assume
that we have a procedure that figures out which vertices are in cycles.  The
input is a code $(c_1, c_2,\dots,c_{n-1})$.

\begin{tabbing}
Algorithm to go from Happy Code to Tree\\
{\bf begin}\=\\
\>$\verb+edges+=\{1\to 0\}$\\
\>{\bf for} \= $i=2$ {\bf to} $n$ {\bf do}\\
\>\>add edge $i\to c_{i-1}$\\
\>{\bf while} $\verb+cycles+\neq\emptyset$ {\bf do}\\
\>\>$J\leftarrow \displaystyle{\max_{j\in\verb+cycles+}j}$\\
\>\>$k\leftarrow\verb+succ+(J)$\\
\>\>add edge $J\to\verb+succ+(k)$\\
\>\>remove edge $J\to k$\\
\>\>add edge $k\to \verb+succ+(1)$\\
\>\>remove edge $1\to\verb+succ+(1)$\\
\>\>add edge $1\to k$\\
{\bf end.}
\end{tabbing}

It is clear that this algorithm undoes the Happy Code tree surgery, one
step at a time.

\section{The Two Methods Give the Same Happy Code}
\subsection{A Lemma}
In order to prove that the tree surgery method gives the same code as the
matrix method, we will need the following lemma.  The notion of a cycle being
``active'' or ``inactive'' is content-free.  A cycle is ``active'' if we
label it as active, and inactive otherwise.  Actually we will see later that
``active'' corresponds to appearing off the diagonal in the matrix, and 
``inactive'' corresponds to being on the diagonal.
\begin{lemma}\label{Escher}
The input for the following algorithm is an active loop at vertex $L$ 
and an active
cycle (which may also be a loop) containing at least one vertex greater than
$L$. Let $J$ be the largest element in the cycle.
Then the output is the original cycle, now inactive, with $L$ 
inserted between $J$ and $\verb+succ+(J)$.
\end{lemma}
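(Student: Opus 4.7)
The plan is to induct on the size of the active cycle (call it $k$), with the loop at $L$ serving as the base-case ``seed.'' Throughout, I will track two invariants: (i) at each intermediate stage there is exactly one active loop, located at the current ``swap vertex,'' together with possibly an inactive partial cycle; and (ii) the product of successor values around the combined structure matches that of the original cycle with $L$ spliced in after $J$.

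First I would handle the base case $k=1$, i.e.\ the active cycle is itself a loop at some vertex $J>L$. Here a single swap suffices: the loop at $J$ and the loop at $L$ are combined into the $2$-cycle $J\to L\to J$, which is exactly ``the cycle with $L$ inserted after $J$.'' I expect the algorithm's innermost step to perform precisely this swap (mirroring how $\phi_{n+1}$ exchanges the first factor of the product with the factor indexed by its subscript in \S 2.3 of Chapter \ref{happyhell}), with the Matrix Tree Theorem involution $\phi_0$ deactivating the result because the largest vertex in the new cycle is now the unique largest vertex overall.

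For the inductive step I would peel off one vertex of the active cycle at a time. Suppose the active cycle is $J\to v_1\to v_2\to\cdots\to v_{k-1}\to J$. A single pass of the algorithm should (a) swap the active loop at $L$ with an entry involving $J$, creating a longer active structure; (b) invoke $\phi_0$ so that, because $J$ is still the greatest vertex in the unique remaining active cycle, the diagonality of that cycle gets toggled; and (c) leave the data in a configuration that matches the hypothesis of the inductive step but with a strictly smaller active cycle. Applying the inductive hypothesis then completes the desired insertion.

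The main obstacle will be step (b): one has to verify that after the swap in step (a), the resulting off-diagonal configuration is genuinely a single cycle (not two cycles or a more complicated graph), and that $J$ remains its maximum element so that the convention ``toggle the cycle containing the greatest vertex'' really fires on this cycle rather than on some other cycle elsewhere in the digraph. This is exactly the place where the hypothesis ``the active cycle contains at least one vertex greater than $L$'' is used: it guarantees $J>L$, so $J$ continues to dominate the new active cycle after $L$ has been absorbed. Once that invariant is nailed down, the remaining bookkeeping — that the final output cycle is inactive and that $L$ lands between $J$ and the original $\verb+succ+(J)$ — follows from reading off the successor function after all swaps have been performed.
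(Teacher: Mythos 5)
Your base case is fine, but your inductive step misreads how the algorithm actually behaves, and the gap is fatal to the induction. You claim that a single pass (a) swaps the active loop at $L$ with an entry involving $J$ and (c) leaves a configuration satisfying the lemma's hypothesis with a strictly smaller active cycle. Neither is true in general. On the first pass the algorithm selects $p=J$ and $q=$ the second-largest \emph{active} vertex, which is a vertex of the cycle whenever the cycle contains a second vertex exceeding $L$; the loop at $L$ is not touched at all. What that pass actually produces is an \emph{inactive} cycle containing $J$, a smaller active cycle \emph{not} containing $J$, and the still-active loop at $L$ --- a configuration that does not satisfy the lemma's hypothesis, and which carries inactive debris that the instruction ``toggle the activity of the cycle containing $J$'' will keep reactivating on later passes. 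The correct structure, which the paper exploits, is a Towers-of-Hanoi doubling recursion rather than a linear peeling: for an Escher cycle of length $c$ all of whose vertices exceed $L$, the algorithm spends $2^{c-1}-1$ steps disassembling the cycle until only the original $\verb+succ+(J)$ survives as an active loop (this is the time-reversal of a length-$(c-1)$ insertion, which is where the induction hypothesis is actually invoked), performs the single middle step that finally splices $L$ in, and then spends $2^{c-1}-1$ steps reassembling --- $2^c-1$ steps in total, not $k$. Your invariant (i), that each intermediate stage has exactly one active loop, is also false: the paper's worked example reaches a stage with active loops at $4$ and $8$ simultaneously.

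You are also missing the second half of the argument. The hypothesis does not say the cycle is an Escher cycle, nor that all of its vertices exceed $L$; the paper handles the general case by a separate ``chaining'' reduction, observing that vertices smaller than $L$, and vertices lying between a vertex and its nearest greater neighbour along the cycle, are never chosen as $p$ or $q$ and therefore ride along with their successors, so the general case collapses to the all-larger Escher case. Without that reduction and without the doubling recursion above, the induction you propose does not advance past $k=1$. (A smaller point: invoking $\phi_0$ and cycle diagonality here imports the matrix machinery into what is a purely combinatorial lemma about the activity-labelled surgery; that correspondence is only established in the theorem that follows, so it cannot be used to prove this lemma.)
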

\begin{tabbing}
{\bf begin}\=\\
\>{\bf repeat}\=\\
\>\>$p\leftarrow$ largest vertex in an active cycle\\
\>\>$q\leftarrow$ second-largest vertex in an active cycle\\
\>\>$m\leftarrow\verb+succ+(q)$\\
\>\>add edge $q\to\verb+succ+(p)$\\
\>\>remove edge $p\to\verb+succ+(p)$\\
\>\>remove edge $q\to m$\\
\>\>add edge $p\to m$\\
\>\>toggle ``activity'' of the cycle containing $J$\\
\>{\bf until} there are no active cycles.\\
{\bf end.}
\end{tabbing}
\begin{proof}
We begin by noting that for a cycle of length $c$, the worst-case scenario
is that each edge (other than $J\to\verb+succ+(J)$) is an ascent and all 
vertices are larger than the one in the loop.  For such
a cycle, the algorithm terminates after $2^c-1$ iterations.  In fact, 
in this situation the
iterative algorithm above is actually equivalent to a recursive algorithm.
This is proven by induction.  

The base case is that the cycle is 
a loop at $J$.  This is an Escher cycle of length $c=1$.  
The algorithm sets $p=J$,
$q=$ the vertex of the loop, and $m=q$.  It removes 
the loops and adds edges $J\to q$ and $q\to J$, then toggles the activity
of the cycle containing $J$.  There are no more active cycles and the
algorithm
has inserted $q$ directly after $J$ in its cycle.  Furthermore it has
taken $2^1-1=1$ step.

The induction hypothesis is that it takes $2^{c-1}-1$ steps to complete
the algorithm if the cycle is an Escher cycle of length $c-1$, and that 
the result is that of
inserting the loop vertex after $J$ in the cycle.  

Now we consider an Escher
cycle of length $c$ containing only vertices larger than the loop vertex.
Since each vertex of the cycle is larger than the loop vertex, the only
way to be able to change the edge from the loop vertex (call it $L$) is
to make all but one of the vertices in the cycle inactive.  

This is a slow
process.  The first step of the algorithm removes $J$ from the cycle, forming
a loop which becomes inactive.  Next, the second-largest vertex is removed,
and $J$ becomes active again.  The following step will form a 2-cycle with
these two vertices and make it inactive.  The procedure continues until only
the smallest vertex from the cycle (the original $\verb+succ+(J)$) 
is left in a loop, with $J$ and the rest of the vertices in an inactive 
cycle. By the induction hypothesis, 
this takes $2^{c-1}-1$ steps because it is precisely the reverse of
adding that smallest vertex to the cycle.  The next step of the iterative
algorithm switches the successors of $L$ and the old $\verb+succ+(J)$ and
makes the rest of the vertices active again.  The remaining steps merely
undo all of the previous steps, with the exception that $L$
has been inserted before the old $\verb+succ+(J)$ in all the cycles 
containing it.  The number of steps before we finish is thus 
$2\cdot(2^{c-1}-1)+1=2^c-1$.  Furthermore, since $L$ has been inserted before
$\verb+succ+(J)$, in the final cycle it appears right after $J$.

Thus we have the result in the case where the cycle is an Escher
cycle all of whose vertices are greater than $L$.  However, in fact
any cycle reduces to an Escher cycle of vertices greater than
$L$ in the following way:  any vertices smaller than $L$ will never be
affected by the edge switching, because $L$ is active until the bitter end
and is never the largest active vertex.  So these vertices can be considered
to be chained to their successors and thus do not effect the length of time
the algorithm takes nor its effect.  Furthermore, any vertices that fall
in between a vertex and its nearest greater neighbor are also chained to
their successors.  
\end{proof}
\begin{ex}

\vspace{7pt}

\noindent
\begin{picture}(250,110)
\put(25,75){9}
\put(30,78){\vector(1,0){20}}
\put(50,75){2}
\put(55,75){\vector(1,-1){20}}
\put(75,50){3}
\put(75,50){\vector(-1,-1){20}}
\put(50,25){6}
\put(50,28){\vector(-1,0){20}}
\put(25,25){8}
\put(25,30){\vector(-1,1){20}}
\put(0,50){7}
\put(5,58){\vector(1,1){19}}
\put(110,50){4}
\put(115,50){$\hookleftarrow$}
\put(119,55.5){\line(1,0){7}}
\put(0,100){{\bf Step 0}}
\put(50,100){\underline{Active}}
\put(200,100){\underline{Inactive}}
\end{picture}

\noindent
When we switch the successors of the two largest vertices, we replace
the edges $9\to 2$ and $8\to 7$ by the edges $9\to 7$ and $8\to 2$.  
This breaks our cycle into two cycles, one of which is inactive:

\vspace{15pt}
\noindent
\begin{picture}(250,100)
\put(0,100){{\bf Step 1}}
\put(50,100){\underline{Active}}
\put(200,100){\underline{Inactive}}
\put(25,75){8}
\put(30,78){\vector(1,0){20}}
\put(50,75){2}
\put(53,72){\vector(0,-1){35}}
\put(50,25){3}
\put(50,28){\vector(-1,0){20}}
\put(25,25){6}
\put(28,34){\vector(0,1){35}}
\put(110,50){4}
\put(115,50){$\hookleftarrow$}
\put(119,55.5){\line(1,0){7}}
\put(204,75){9}
\put(209,80){\vector(1,0){20}}
\put(229,75){7}
\put(229,75){\vector(-1,0){20}}
\end{picture}

\noindent 
We repeat.  We replace the edges $8\to 2$ and $6\to 8$ by the edges
$6\to 2$ and $8\to 8$ (a loop) and reactivate the cycle containing 9.

\vspace{15pt}
\noindent
\begin{picture}(250,100)
\put(0,100){{\bf Step 2}}
\put(50,100){\underline{Active}}
\put(200,100){\underline{Inactive}}
\put(110,25){8}
\put(115,25){$\hookleftarrow$}
\put(119,30.5){\line(1,0){7}}
\put(50,50){2}
\put(55,53){\vector(1,-1){20}}
\put(75,25){3}
\put(75,28){\vector(-1,0){40}}
\put(25,25){6}
\put(30,34){\vector(1,1){20}}
\put(110,50){4}
\put(115,50){$\hookleftarrow$}
\put(119,55.5){\line(1,0){7}}
\put(104,75){9}
\put(109,80){\vector(1,0){20}}
\put(129,75){7}
\put(129,75){\vector(-1,0){20}}
\end{picture}

\noindent
The two largest active vertices are 8 and 9, so 8 is inserted into
9's cycle.

\vspace{15pt}
\noindent
\begin{picture}(250,100)
\put(0,100){{\bf Step 3}}
\put(50,100){\underline{Active}}
\put(200,100){\underline{Inactive}}
\put(200,75){8}
\put(207,78){\vector(1,0){40}}
\put(250,75){7}
\put(250,78){\vector(-1,-1){20}}
\put(225,50){9}
\put(225,55){\vector(-1,1){20}}
\put(50,50){2}
\put(55,53){\vector(1,-1){20}}
\put(75,25){3}
\put(75,28){\vector(-1,0){40}}
\put(25,25){6}
\put(30,34){\vector(1,1){20}}
\put(110,50){4}
\put(115,50){$\hookleftarrow$}
\put(119,55.5){\line(1,0){7}}
\end{picture}

\noindent
Now that the loop vertex, 4, is the second-largest active vertex,
it gets inserted into the other active cycle.  Note that it ends
up inserted just before $\verb+succ+(9)$.  This marks the approximate
halfway point of the process.  From now on we basically undo everything
we did.

\vspace{15pt}
\noindent
\begin{picture}(250,100)
\put(0,100){{\bf Step 4}}
\put(50,100){\underline{Active}}
\put(200,100){\underline{Inactive}}
\put(100,75){8}
\put(107,78){\vector(1,0){40}}
\put(150,75){7}
\put(150,78){\vector(-1,-1){20}}
\put(125,50){9}
\put(125,55){\vector(-1,1){20}}
\put(25,75){4}
\put(30,78){\vector(1,0){20}}
\put(50,75){2}
\put(53,72){\vector(0,-1){35}}
\put(50,25){3}
\put(50,28){\vector(-1,0){20}}
\put(25,25){6}
\put(28,34){\vector(0,1){35}}
\end{picture}

\vspace{-5pt}\noindent
Now we switch the edges from 8 and 9, which has the effect of removing
8 from 9's cycle.  Step 5 corresponds to Step 2, only with 4 inserted
before 2 and the activity of 9's cycle toggled.

\vspace{15pt}
\noindent
\begin{picture}(250,100)
\put(0,100){{\bf Step 5}}
\put(50,100){\underline{Active}}
\put(200,100){\underline{Inactive}}
\put(25,75){4}
\put(30,78){\vector(1,0){20}}
\put(50,75){2}
\put(53,72){\vector(0,-1){35}}
\put(50,25){3}
\put(50,28){\vector(-1,0){20}}
\put(25,25){6}
\put(28,34){\vector(0,1){35}}
\put(204,75){9}
\put(209,80){\vector(1,0){20}}
\put(229,75){7}
\put(229,75){\vector(-1,0){20}}
\put(110,50){8}
\put(115,50){$\hookleftarrow$}
\put(119,55.5){\line(1,0){7}}
\end{picture}

\vspace{-5pt}\noindent
Now 8 will get inserted into the larger cycle, and 9's cycle is reactivated.
Step 6 corresponds to Step 1, except that 4 has been inserted before 2
and the cycle containing 9 has the opposite activity.

\vspace{15pt}
\noindent
\begin{picture}(250,100)
\put(0,100){{\bf Step 6}}
\put(50,100){\underline{Active}}
\put(200,100){\underline{Inactive}}
\put(25,75){8}
\put(30,78){\vector(1,0){20}}
\put(50,75){4}
\put(55,75){\vector(1,-1){20}}
\put(75,50){2}
\put(75,50){\vector(-1,-1){20}}
\put(50,25){3}
\put(50,28){\vector(-1,0){20}}
\put(25,25){6}
\put(28,34){\vector(0,1){35}}
\put(104,75){9}
\put(109,80){\vector(1,0){20}}
\put(129,75){7}
\put(129,75){\vector(-1,0){20}}
\end{picture}

\vspace{-5pt}\noindent 
Step 7 corresponds to Step 0.

\vspace{15pt}
\noindent
\begin{picture}(250,100)
\put(0,100){{\bf Step 7}}
\put(50,100){\underline{Active}}
\put(200,100){\underline{Inactive}}
\put(225,75){9}
\put(230,78){\vector(1,0){20}}
\put(250,75){4}
\put(255,78){\vector(1,0){20}}
\put(275,75){2}
\put(278,73){\vector(0,-1){14}}
\put(275,50){3}
\put(275,50){\vector(-1,-1){20}}
\put(250,25){6}
\put(250,28){\vector(-1,0){20}}
\put(225,25){8}
\put(225,30){\vector(-1,1){20}}
\put(200,50){7}
\put(205,58){\vector(1,1){19}}
\end{picture}

\noindent
The end result is that of inserting 4 into the cycle, 
right after 9.  It took, in this case, $7=2^3-1$ steps, because the
cycle we started with 
is ``equivalent'' to the following Escher cycle with vertices
larger than 4:

\vspace{15pt}
\begin{picture}(50,25)
\put(25,25){9}
\put(30,25){\vector(1,-1){20}}
\put(50,0){6}
\put(49,3){\vector(-1,0){40}}
\put(0,0){8}
\put(5,10){\vector(1,1){20}}
\end{picture}

\noindent
Note that 2 and 3 (the two vertices less than 4, our loop vertex)
are ``chained'' together and to 6, and their 
outgoing edges never change.  
Meanwhile, 7 is ``chained'' to its successor, 9, 
because the edge into 7 is not an ascent.
\end{ex}
\subsection{The proof}
\begin{theorem}
The tree surgery method gives the same Happy Code as the matrix method.
\end{theorem}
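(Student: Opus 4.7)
The plan is to proceed by induction on the length $\ell$ of the path from $1$ to $0$ in the original tree. When $\ell = 1$, the tree already has $\verb+succ+(1) = 0$, and both methods fall through their {\bf else} branch: tree surgery just reads off $(\verb+succ+(2),\dots,\verb+succ+(n))$, and one can verify that in the matrix method the initial diagonal array $\phi_0(t)$ survives every $\phi_i$, $\phi_i'$ unchanged because the $(0,0)$-entry is $B_0$, and $\phi_{n+1}$ reads off exactly the same tuple. So the base case is immediate.

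For the inductive step, I would show that a single pass of the tree-surgery {\bf repeat} loop (pulling $j = \verb+succ+(1)$ out of the path and either making it a loop or inserting it into the existing cycle) corresponds to a full excursion of the Involution Principle algorithm from $-A_{n+1}'$ back to $-A_{n+1}'$, which ends in a happy functional digraph identical to the one after tree surgery. Concretely: after $\phi_{n+1}$ is applied for the first time to the monomial $B_{j_1} B_{j_2}\cdots B_{j_n}$ coming from the current tree, the entry $B_{j_1}$ (with $j_1 = j$) gets swapped with $B_{j_{j}}$, which amounts exactly to removing the edge $1\to j$, adding $1\to\verb+succ+(j)$, and creating a loop at $j$. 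This accounts for the ``loop case'' of surgery (when $j\geq J$, i.e.\ no prior cycle contains a larger vertex).

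The hard part is the ``insertion case'' $j < J$, where the loop just created at $j$ co-exists with an already-active cycle having largest element $J > j$. In the matrix picture this manifests as the new array no longer corresponding to a tree (it has two off-diagonal components, or a loop plus a cycle), so $\phi_0$ must toggle the diagonality of the cycle containing the largest vertex, re-entering the cascade of $\phi_i$, $\phi_i'$, $\phi_{n+1}$ and triggering further swaps. The key observation is that this cascade is precisely the iterative procedure analyzed in Lemma \ref{Escher}: each bounce between $A_{n+1}$ and $A_0'$ performs one iteration of the activate/deactivate step described there, and the lemma guarantees that after $2^c - 1$ such bounces (where $c$ measures the ``Escher portion'' of the cycle above $j$) we arrive at a configuration in which $j$ has been inserted immediately after $J$ in the unique active cycle and all cycles are inactive (on-diagonal). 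This is exactly the outcome of one surgical insertion step.

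Finally, I would observe that after one such excursion the happy functional digraph encoded by the diagonal array matches the tree after one pass of surgery, and the new $\verb+succ+(1)$ is the old $\verb+succ+(\verb+succ+(1))$, so the path from $1$ to $0$ has shrunk by exactly one vertex. The induction hypothesis applied to this shorter-path configuration completes the argument: the remaining bounces of the matrix method assemble the same code that subsequent surgery iterations produce, and reading off $(\verb+succ+(2),\dots,\verb+succ+(n))$ in the final happy functional digraph yields the Happy Code common to both algorithms.
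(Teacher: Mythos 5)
Your proposal is correct and follows essentially the same route as the paper's proof: induction on the length of the path from $1$ to $0$, a base case where the edge $1\to 0$ lets the diagonal array pass through every involution untouched, the $\phi_{n+1}$ swap realizing the removal of $\verb+succ+(1)$ from the path, a case split on whether the extracted vertex is the largest in a cycle, and an appeal to Lemma \ref{Escher} for the insertion case. The only slight imprecision is that the Escher-lemma bounces occur between $A_0'$ and intermediate matrix sets (where the relevant off-diagonal entries change sign) rather than between $A_{n+1}$ and $A_0'$, but this does not affect the structure of the argument.
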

\begin{proof}
We assume constant $n$ and proceed by induction on the length of the
path from 1 to 0.  The base case is the case where the tree includes
the edge $1\to 0$.  In that case, the tree surgery method doesn't have
to go through the {\bf repeat} loop at all and the code is given by
$(\verb+succ+(2),\verb+succ+(3),\dots,\verb+succ+(n))$.  The matrix
method goes as follows for the base case:  first, an application of
$\overline{-I_{A_0'}}\circ\phi_0$ gives us an array with 
$B_{\verb+succ+(i)}$ in the $i^{\rm th}$ diagonal position.  
Let $j_i=\verb+succ+(i)$.  Next,
\[
\overline{-I_{A_1}}\circ\phi_0'\left(\begin{bmatrix}
\lambda &&&&\\
&B_0&&&\\
&&B_{j_2}&&\\
&&&\ddots&\\
&&&&B_{j_n}
\end{bmatrix}\right)=\begin{bmatrix}
\lambda &&&&\\
&B_0&&&\\
&&B_{j_2}&&\\
&&&\ddots&\\
&&&&B_{j_n}
\end{bmatrix}\in A_1.
\]
Since the involutions have been defined in such a way that none of these
diagonal entries ever get cancelled by a matrix operation, we have after 
many similar applications
\[
\overline{-I_{A_n'}}\circ\phi_n\left(\begin{bmatrix}
\lambda &&&&\\
&B_0&&&\\
&&B_{j_2}&&\\
&&&\ddots&\\
&&&&B_{j_n}
\end{bmatrix}\right)=\begin{bmatrix}
\lambda &&&&\\
&B_0&&&\\
&&B_{j_2}&&\\
&&&\ddots&\\
&&&&B_{j_n}
\end{bmatrix}\in A_n'.
\]
Now we have 
\[
\overline{-I_{A_{n+1}}}\circ\phi_n'\left(\begin{bmatrix}
\lambda &&&&\\
&B_0&&&\\
&&B_{j_2}&&\\
&&&\ddots&\\
&&&&B_{j_n}
\end{bmatrix}\right)=
B_0 B_{j_2} \dots B_{j_n}\in A_{n+1}.
\]
\[
\phi_{n+1}\left(B_0 B_{j_2} \dots B_{j_n}\right)=-b_0 B_{j_2} \dots B_{j_n}
\in -A_{n+1}'.
\]
Here, since $j_i=\verb+succ+(i)$, we end up with the same code we got by
tree surgery.  Thus the base case is true.

Our induction hypothesis is that the two methods give the same code for
all happy functional digraphs 
where the path from 1 to 0 is of length $i-1$.
We show that if we start with a functional digraph whose path from 1 to 0 is
of length $i$, both methods will manipulate the graph into one with a
shorter path from 1 to 0.  

The length of the path from 1 to 0 is $i$.  As we start, we have an array
with all entries on the diagonal.  We will automatically
(as in the base case) make it down to $A_{n+1}$ by a sequence of involutions
with no complications, because none of these diagonal entries get cancelled
in the row operation arithmetic.
Let $\verb+succ+(i)=j_i$ and $j_1=r$.  Then
\begin{multline*}
\overline{-I_{A_{n+1}}}\circ\phi_n'\circ\dots\circ\overline{-I_{A_0'}}\circ
\phi_0\left(\begin{bmatrix}
\lambda&&&&\\
&B_r&&&\\
&&B_{j_2}&&\\
&&&\ddots&\\
&&&&B_{j_n}
\end{bmatrix}\right)\\
=B_r B_{j_2} \dots B_{j_n}\in A_{n+1}.
\end{multline*}
Now since $r\neq 0$, 
\[\phi_{n+1}(B_r B_{j_2} \dots B_{j_n})=-B_{j_r}B_{j_2}\dots b_r\dots B_{j_n}
\in A_{n+1},
\]
and
\begin{multline*}
\overline{-I_{A_n}'}\circ\phi_n'\circ\overline{-I_{A_{n+1}}}\left(
-B_{j_r}B_{j_2}\dots b_r\dots B_{j_n}\right)=\\
-\begin{bmatrix}
&&&-b_r&&\\
&B_{j_r}&&&&\\
&&\ddots&&&\\
-\lambda&&&&&\\
&&&&\ddots&\\
&&&&&B_{j_n}
\end{bmatrix}\in -A_n'.
\end{multline*}
There will be no problem in applying involutions and we will move
swiftly through the sequence of sets $-A_n',-A_n,-A_{n-1}',-A_{n-1},\dots$
until we reach the one where the $\lambda$
first appears in this (the $r^{\rm th}$) row.
\begin{multline*}
\overline{-I_{A_{n-r+1}}}\circ\phi_{n-r}'\left(-\left[\begin{smallmatrix}
&&&-b_r&&\\
&B_{j_r}&&&&\\
&&\ddots&&&\\
-\lambda&&&&&\\
&&&&\ddots&\\
&&&&&B_{j_n}
\end{smallmatrix}\right]\right)=\\
\left[\begin{smallmatrix}
\lambda&&&&&\\
&B_{j_r}&&&&\\
&&\ddots&&&\\
&&&+b_r&&\\
&&&&\ddots&\\
&&&&&B_{j_n}
\end{smallmatrix}\right]\in A_{n-r+1}.
\end{multline*}
Now that we are in $A_{n-r+1}$, we apply $\phi_{n-r+1}$:
\begin{multline*}
\overline{-I_{A_{n-r+1}}}\circ\phi_{n-r+1}\left(\left[\begin{smallmatrix}
\lambda&&&&&\\
&B_{j_r}&&&&\\
&&\ddots&&&\\
&&&+b_r&&\\
&&&&\ddots&\\
&&&&&B_{j_n}
\end{smallmatrix}\right]\right)=\\-\left[\begin{smallmatrix}
\lambda&&&&&\\
&B_{j_r}&&&&\\
&&\ddots&&&\\
&&&-b_r&&\\
&&&&\ddots&\\
&&&&&B_{j_n}
\end{smallmatrix}\right]\in -A_{n-r+1}.
\end{multline*}
This array appears in all of the previous matrices, so we get all the
way back up to $A_0'$.  $\phi_0$ toggles the diagonality of the cycle
with the largest element.  Note that so far, what has happened is that 
we have switched the successors for 1 and $r$.  In other words, we have
removed $r$ from the path from 1 to 0, and created a loop at $r$; 1
now points directly at what used to be after $r$ on the path to 0.

\underline{Case 1}
If $r$ is the largest vertex in a cycle, 
\[
\overline{-I_{A_0'}}\circ\phi_0\left(-\left[\begin{smallmatrix}
\lambda&&&&&\\
&B_{j_r}&&&&\\
&&\ddots&&&\\
&&&-b_r&&\\
&&&&\ddots&\\
&&&&&B_{j_n}
\end{smallmatrix}\right]\right)=\left[\begin{smallmatrix}
\lambda&&&&&\\
&B_{j_r}&&&&\\
&&\ddots&&&\\
&&&B_r&&\\
&&&&\ddots&\\
&&&&&B_{j_n}
\end{smallmatrix}\right]\in A_0'.
\]
We are done
because the effect of the tree surgery method would have been
exactly the same:  we would have removed $r$ from the path from
1 to 0, and created a loop on it.  By the induction hypothesis,
the two methods will give the same code because the path from 1
to 0 now has length $i-1$.

Otherwise, we probably still have a long way to go. 

\underline{Case 2}
If the largest element in a cycle is not $r$, then tree surgery has the effect
of inserting $r$ after the largest element in a cycle, $J$.

In this case, the application
of $\phi_0$ will move another cycle off the diagonal.  Our new element
of $A_0'$ looks like this:
\[
\begin{bmatrix}
\lambda&&&&&&&\\
&B_{j_r}&&&&&&\\
&&\ddots&&&&&\\
&&&-b_r&&&&\\
&&&&\ddots&&&\\
&&-b_k&&&&&\\
&&&&&&\ddots&\\
&&&&&&&B_{j_n}
\end{bmatrix},
\]
where $k$ is one of the vertices in the new off-diagonal cycle,
and $k=\verb+succ+(J)$ where $J$ is the largest vertex in a cycle.
($-b_k$ is in row J.)  Nothing interesting
happens with the involutions until we reach $A_{n-J+1}$:
\begin{multline*}
\overline{-I_{A_{n-J+1}}}\circ\phi_{n-J+1}\left(\left[\begin{smallmatrix}
\lambda&&&&&&&\\
&B_{j_r}&&&&&&\\
&&\ddots&&&&&\\
&&&-b_r&&&&\\
&&&&\ddots&&&\\
&&-b_k&&&&&\\
&&&&&&\ddots&\\
&&&&&&&B_{j_n}
\end{smallmatrix}\right]\right)=\\-\left[\begin{smallmatrix}
\lambda&&&&&&&\\
&B_{j_r}&&&&&&\\
&&\ddots&&&&&\\
&&&-b_r&&&&\\
&&&&\ddots&&&\\
&&+b_k&&&&&\\
&&&&&&\ddots&\\
&&&&&&&B_{j_n}
\end{smallmatrix}\right]\in -A_{n-J+1}.
\end{multline*}
\begin{multline*}
\overline{-I_{A_{n-J+1}}}\circ\phi_{n-J}'\left(\left[\begin{smallmatrix}
\lambda&&&&&&&\\
&B_{j_r}&&&&&&\\
&&\ddots&&&&&\\
&&&-b_r&&&&\\
&&&&\ddots&&&\\
&&+b_k&&&&&\\
&&&&&&\ddots&\\
&&&&&&&B_{j_n}
\end{smallmatrix}\right]\right)=\\
\left[\begin{smallmatrix}
&&-b_k&&&&&\\
&B_{j_r}&&&&&&\\
&&\ddots&&&&&\\
&&&-b_r&&&&\\
&&&&\ddots&&&\\
-\lambda&&&&&&&\\
&&&&&&\ddots&\\
&&&&&&&B_{j_n}
\end{smallmatrix}\right]\in A_{n-J+1}.
\end{multline*}
From here we can move through $A_{n-J+1},A_{n-J+1}',\dots$ 
until we reach the next set where an off-
diagonal entry disappears.  The next time it happens depends on how
far down in the matrix $M_{n_J+1}$
the row with the next off-diagonal entry appears.

\underline{Case 2a}
If $r$ is the second-largest vertex in an off-diagonal cycle, 
then we're in business.
\begin{multline*}
\overline{-I_{A_{n-r+1}}}\circ\phi_{n-r+1}\left(\left[\begin{smallmatrix}
&&-b_k&&&&&\\
&B_{j_r}&&&&&&\\
&&\ddots&&&&&\\
&&&-b_r&&&&\\
&&&&\ddots&&&\\
-\lambda&&&&&&&\\
&&&&&&\ddots&\\
&&&&&&&B_{j_n}
\end{smallmatrix}\right]\right)=\\
-\left[\begin{smallmatrix}
&&-b_k&&&&&\\
&B_{j_r}&&&&&&\\
&&\ddots&&&&&\\
&&&+b_r&&&&\\
&&&&\ddots&&&\\
-\lambda&&&&&&&\\
&&&&&&\ddots&\\
&&&&&&&B_{j_n}
\end{smallmatrix}\right]\in -A_{n-r+1}.
\end{multline*}
Now applications of involutions will switch the entries in row 0 and row $r$:
\begin{multline*}
\overline{-I_{A_{n-r+1}}}\circ\phi_{n-r}'\left(-\left[\begin{smallmatrix}
&&-b_k&&&&&\\
&B_{j_r}&&&&&&\\
&&\ddots&&&&&\\
&&&+b_r&&&&\\
&&&&\ddots&&&\\
-\lambda&&&&&&&\\
&&&&&&\ddots&\\
&&&&&&&B_{j_n}
\end{smallmatrix}\right]\right)=\\
\left[\begin{smallmatrix}
&&&-b_r&&&&\\
&B_{j_r}&&&&&&\\
&&\ddots&&&&&\\
&&+b_k&&&&&\\
&&&&\ddots&&&\\
-\lambda&&&&&&&\\
&&&&&&\ddots&\\
&&&&&&&B_{j_n}
\end{smallmatrix}\right]\in A_{n-r+1}.
\end{multline*}
But we still don't get to move on to a matrix set with a larger subscript:
\begin{multline*}
\overline{-I_{A_{n-r+1}}}\circ\phi_{n-r+1}\left(\left[\begin{smallmatrix}
&&&-b_r&&&&\\
&B_{j_r}&&&&&&\\
&&\ddots&&&&&\\
&&+b_k&&&&&\\
&&&&\ddots&&&\\
-\lambda&&&&&&&\\
&&&&&&\ddots&\\
&&&&&&&B_{j_n}
\end{smallmatrix}\right]\right)=\\-\left[\begin{smallmatrix}
&&&-b_r&&&&\\
&B_{j_r}&&&&&&\\
&&\ddots&&&&&\\
&&-b_k&&&&&\\
&&&&\ddots&&&\\
-\lambda&&&&&&&\\
&&&&&&\ddots&\\
&&&&&&&B_{j_n}
\end{smallmatrix}\right]\in -A_{n-r+1}
\end{multline*}
Note that now we are headed up (toward $A_0'$) again.  The next
interesting involution occurs when we again have an element of 
the set where $-\lambda$
first appears in the $J^{\rm th}$ row.
\begin{multline*}
\overline{-I_{A_{n-J+1}}}\circ\phi_{n-J}'\left(-\left[\begin{smallmatrix}
&&&-b_r&&&&\\
&B_{j_r}&&&&&&\\
&&\ddots&&&&&\\
&&-b_k&&&&&\\
&&&&\ddots&&&\\
-\lambda&&&&&&&\\
&&&&&&\ddots&\\
&&&&&&&B_{j_n}
\end{smallmatrix}\right]\right)=\\
\left[\begin{smallmatrix}
\lambda&&&&&&&\\
&B_{j_r}&&&&&&\\
&&\ddots&&&&&\\
&&-b_k&&&&&\\
&&&&\ddots&&&\\
&&&+b_r&&&&\\
&&&&&&\ddots&\\
&&&&&&&B_{j_n}
\end{smallmatrix}\right]\in A_{n-J+1}.
\end{multline*}
Now $b_r$ is in the $J^{\rm th}$ row.  After changing its sign we will
continuing applying involutions whose images are in sets with decreasing
subscripts:
\begin{multline*}
\overline{-I_{A_{n-J+1}}}\circ\phi_{n-J+1}\left(\left[\begin{smallmatrix}
\lambda&&&&&&&\\
&B_{j_r}&&&&&&\\
&&\ddots&&&&&\\
&&-b_k&&&&&\\
&&&&\ddots&&&\\
&&&+b_r&&&&\\
&&&&&&\ddots&\\
&&&&&&&B_{j_n}
\end{smallmatrix}\right]\right)=\\
-\left[\begin{smallmatrix}
\lambda&&&&&&&\\
&B_{j_r}&&&&&&\\
&&\ddots&&&&&\\
&&-b_k&&&&&\\
&&&&\ddots&&&\\
&&&-b_r&&&&\\
&&&&&&\ddots&\\
&&&&&&&B_{j_n}
\end{smallmatrix}\right]\in -A_{n-J+1}
\end{multline*}
and now we'll make it all the way back up to $A_0$ without interruption.
When we get there, we note that now the only difference in our graph is
that $\verb+succ+(J)=r$ instead of $k$, and $k$ is now in row $r$
so $\verb+succ+(r)=k$.  In fact, we have inserted $r$ after
the largest vertex in a cycle without changing anything else about the
graph--exactly what would've happened in the tree surgery method.  Since
all off-diagonal entries are now in the same cycle with $J$, we have
\begin{multline*}
\overline{-I_{A_0'}}\circ\phi_0\left(-\left[\begin{smallmatrix}
\lambda&&&&&&&\\
&B_{j_r}&&&&&&\\
&&\ddots&&&&&\\
&&-b_k&&&&&\\
&&&&\ddots&&&\\
&&&-b_r&&&&\\
&&&&&&\ddots&\\
&&&&&&&B_{j_n}
\end{smallmatrix}\right]\right)=\\
\left[\begin{smallmatrix}
\lambda&&&&&&&\\
&B_{j_r}&&&&&&\\
&&\ddots&&&&&\\
&&&B_k&&&&&\\
&&&&\ddots&&&\\
&&&&&B_r&&\\
&&&&&&\ddots&\\
&&&&&&&B_{j_n}
\end{smallmatrix}\right]\in A_0'.
\end{multline*}
By the induction hypothesis, from here (a graph where the path from
1 to 0 is of length $(i-1)$) we know that the two methods give the same
code.

\underline{Case 2b}
However, if $r$ is not the second-largest vertex in a cycle, the procedure
is a bit longer.  In general, Case 2 started with 
\[
\begin{bmatrix}
\lambda&&&&&&&\\
&B_{j_r}&&&&&&\\
&&\ddots&&&&&\\
&&&-b_r&&&&\\
&&&&\ddots&&&\\
&&-b_k&&&&&\\
&&&&&&\ddots&\\
&&&&&&&B_{j_n}
\end{bmatrix}\in A_0'.
\]
The result of applying the first bunch of involutions before we end up
back at $A_0'$ again is to switch the rows of the lowest (meaning their
row indices are largest) two off-diagonal 
entries.  Let $l$ be the second largest vertex in the cycle containing $J$,
and let $m=\verb+succ+(l)$.  
So our starting matrix actually looks something like this (although it
is possible that $\verb+succ+(l)=m=J$):
\[
\begin{bmatrix}
\lambda&&&&&&&\\
&B_{j_r}&&&&&&\\
&&\ddots&&&-b_l&&\\
&&&-b_r&&&&\\
&&&&\ddots&&-b_J&\\
&&-b_m&&&&&\\
&&&&&\ddots&&\\
&&&&-b_k&&&\\
&&&&&&\ddots&\\
&&&&&&&B_{j_n}
\end{bmatrix}\in A_0'.
\]
As before, we can get down to $A_{n-J+1}$ uneventfully, but then interesting
things happen:
\begin{multline*}
\overline{-I_{A_{n-J+1}}}\circ\phi_{n-J+1}\left(\left[\begin{smallmatrix}
\lambda&&&&&&&\\
&B_{j_r}&&&&&&\\
&&\ddots&&&-b_l&&\\
&&&-b_r&&&&\\
&&&&\ddots&&-b_J&\\
&&-b_m&&&&&\\
&&&&&\ddots&&\\
&&&&-b_k&&&\\
&&&&&&\ddots&\\
&&&&&&&B_{j_n}
\end{smallmatrix}\right]\right)=\\
-\left[\begin{smallmatrix}
\lambda&&&&&&&\\
&B_{j_r}&&&&&&\\
&&\ddots&&&-b_l&&\\
&&&-b_r&&&&\\
&&&&\ddots&&-b_J&\\
&&-b_m&&&&&\\
&&&&&\ddots&&\\
&&&&+b_k&&&\\
&&&&&&\ddots&\\
&&&&&&&B_{j_n}
\end{smallmatrix}\right]\in -A_{n-J+1}.
\end{multline*}
\begin{multline*}
\overline{-I_{A_{n-J+1}}}\circ\phi_{n-J}'\left(\left[\begin{smallmatrix}
\lambda&&&&&&&\\
&B_{j_r}&&&&&&\\
&&\ddots&&&-b_l&&\\
&&&-b_r&&&&\\
&&&&\ddots&&-b_J&\\
&&-b_m&&&&&\\
&&&&&\ddots&&\\
&&&&+b_k&&&\\
&&&&&&\ddots&\\
&&&&&&&B_{j_n}
\end{smallmatrix}\right]\right)=\\
\left[\begin{smallmatrix}
&&&&-b_k&&&\\
&B_{j_r}&&&&&&\\
&&\ddots&&&-b_l&&\\
&&&-b_r&&&&\\
&&&&\ddots&&-b_J&\\
&&-b_m&&&&&\\
&&&&&\ddots&&\\
-\lambda&&&&&&&\\
&&&&&&\ddots&\\
&&&&&&&B_{j_n}
\end{smallmatrix}\right]\in A_{n-J+1}.
\end{multline*}
Now we apply $\phi_{n-J+1}$, and have no further interruptions until we
reach $A_{n-l+1}$:
\begin{multline*}
\overline{-I_{A_{n-l+1}}}\circ\phi_{n-l+1}\left(\left[\begin{smallmatrix}
&&&&-b_k&&&\\
&B_{j_r}&&&&&&\\
&&\ddots&&&-b_l&&\\
&&&-b_r&&&&\\
&&&&\ddots&&-b_J&\\
&&-b_m&&&&&\\
&&&&&\ddots&&\\
-\lambda&&&&&&&\\
&&&&&&\ddots&\\
&&&&&&&B_{j_n}
\end{smallmatrix}\right]\right)=\\-\left[\begin{smallmatrix}
&&&&-b_k&&&\\
&B_{j_r}&&&&&&\\
&&\ddots&&&-b_l&&\\
&&&-b_r&&&&\\
&&&&\ddots&&-b_J&\\
&&+b_m&&&&&\\
&&&&&\ddots&&\\
-\lambda&&&&&&&\\
&&&&&&\ddots&\\
&&&&&&&B_{j_n}
\end{smallmatrix}\right]\in -A_{n-l+1}.
\end{multline*}
Here, the lowest off-diagonal entry in the matrix was $-b_m$ in row $l$, 
so it changed sign; now we apply $\phi_{n-l}'$.
\begin{multline*}
\overline{-I_{A_{n-l+1}}}\circ\phi_{n-l}'\left(-\left[\begin{smallmatrix}
&&&&-b_k&&&\\
&B_{j_r}&&&&&&\\
&&\ddots&&&-b_l&&\\
&&&-b_r&&&&\\
&&&&\ddots&&-b_J&\\
&&+b_m&&&&&\\
&&&&&\ddots&&\\
-\lambda&&&&&&&\\
&&&&&&\ddots&\\
&&&&&&&B_{j_n}
\end{smallmatrix}\right]\right)=\\
\left[\begin{smallmatrix}
&&-b_m&&&&&\\
&B_{j_r}&&&&&&\\
&&\ddots&&&-b_l&&\\
&&&-b_r&&&&\\
&&&&\ddots&&-b_J&\\
&&&&+b_k&&&\\
&&&&&\ddots&&\\
-\lambda&&&&&&&\\
&&&&&&\ddots&\\
&&&&&&&B_{j_n}
\end{smallmatrix}\right]\in A_{n-l+1}.
\end{multline*}
Positive off-diagonal entries never survive.  We apply $\phi_{n-l+1}$: 
\begin{multline*}
\overline{-I_{A_{n-l+1}}}\circ\phi_{n-l+1}\left(\left[\begin{smallmatrix}
&&-b_m&&&&&\\
&B_{j_r}&&&&&&\\
&&\ddots&&&-b_l&&\\
&&&-b_r&&&&\\
&&&&\ddots&&-b_J&\\
&&&&+b_k&&&\\
&&&&&\ddots&&\\
-\lambda&&&&&&&\\
&&&&&&\ddots&\\
&&&&&&&B_{j_n}
\end{smallmatrix}\right]\right)=\\-\left[\begin{smallmatrix}
&&-b_m&&&&&\\
&B_{j_r}&&&&&&\\
&&\ddots&&&-b_l&&\\
&&&-b_r&&&&\\
&&&&\ddots&&-b_J&\\
&&&&-b_k&&&\\
&&&&&\ddots&&\\
-\lambda&&&&&&&\\
&&&&&&\ddots&\\
&&&&&&&B_{j_n}
\end{smallmatrix}\right]\in -A_{n-l+1}.
\end{multline*}
This array will take us back up to $A_{n-J+1}$ (this should remind you of 
what happened in Case 2a).
\begin{multline*}
\overline{-I_{A_{n-J+1}}}\circ\phi_{n-J}'\left(-\left[\begin{smallmatrix}
&&-b_m&&&&&\\
&B_{j_r}&&&&&&\\
&&\ddots&&&-b_l&&\\
&&&-b_r&&&&\\
&&&&\ddots&&-b_J&\\
&&&&-b_k&&&\\
&&&&&\ddots&&\\
-\lambda&&&&&&&\\
&&&&&&\ddots&\\
&&&&&&&B_{j_n}
\end{smallmatrix}\right]\right)=\\
\left[\begin{smallmatrix}
\lambda&&&&&&&\\
&B_{j_r}&&&&&&\\
&&\ddots&&&-b_l&&\\
&&&-b_r&&&&\\
&&&&\ddots&&-b_J&\\
&&&&-b_k&&&\\
&&&&&\ddots&&\\
&&+b_m&&&&&\\
&&&&&&\ddots&\\
&&&&&&&B_{j_n}
\end{smallmatrix}\right]\in A_{n-J+1}.
\end{multline*}
And our last little side trip:
\begin{multline*}
\overline{-I_{A_{n-J+1}}}\circ\phi_{n-J+1}\left(\left[\begin{smallmatrix}
\lambda&&&&&&&\\
&B_{j_r}&&&&&&\\
&&\ddots&&&-b_l&&\\
&&&-b_r&&&&\\
&&&&\ddots&&-b_J&\\
&&&&-b_k&&&\\
&&&&&\ddots&&\\
&&+b_m&&&&&\\
&&&&&&\ddots&\\
&&&&&&&B_{j_n}
\end{smallmatrix}\right]\right)=\\
-\left[\begin{smallmatrix}
\lambda&&&&&&&\\
&B_{j_r}&&&&&&\\
&&\ddots&&&-b_l&&\\
&&&-b_r&&&&\\
&&&&\ddots&&-b_J&\\
&&&&-b_k&&&\\
&&&&&\ddots&&\\
&&-b_m&&&&&\\
&&&&&&\ddots&\\
&&&&&&&B_{j_n}
\end{smallmatrix}\right]\in -A_{n-J+1}.
\end{multline*}
This last matrix appears in all of the previous sets.
Thus we pass through
a number of sets, finally arriving at
\[-
\begin{bmatrix}
\lambda&&&&&&&\\
&B_{j_r}&&&&&&\\
&&\ddots&&&-b_l&&\\
&&&-b_r&&&&\\
&&&&\ddots&&-b_J&\\
&&&&-b_k&&&\\
&&&&&\ddots&&\\
&&-b_m&&&&&\\
&&&&&&\ddots&\\
&&&&&&&B_{j_n}
\end{bmatrix}\in -A_0'.
\]
This is where we apply $\phi_0$.  Unfortunately, this time we are not
as lucky as in Case 2a, where everything moved back on diagonal.  Note
that this new matrix corresponds to a graph that differs from the one
at the start of Case 2 by only 2 edges--namely, we have switched the
successors of $J$ and $l$, the two largest vertices in the cycle containing
$J$.  Necessarily we now have three cycles; everything in between $J$ and
$l$ has been shorted out and forms its own cycle.  Now when we apply $\phi_0$,
we move the cycle containing $J$ onto the diagonal.  This corresponds to
the cycle containing $J$ being considered inactive.  
\begin{multline*}
\overline{-I_{A_0'}}\circ\phi_0\left(-\left[\begin{smallmatrix}
\lambda&&&&&&&\\
&B_{j_r}&&&&&&\\
&&\ddots&&&-b_l&&\\
&&&-b_r&&&&\\
&&&&\ddots&&-b_J&\\
&&&&-b_k&&&\\
&&&&&\ddots&&\\
&&-b_m&&&&&\\
&&&&&&\ddots&\\
&&&&&&&B_{j_n}
\end{smallmatrix}\right]\right)=\\
\left[\begin{smallmatrix}
\lambda&&&&&&&&\\
&B_{j_r}&&&&&&&\\
&&\ddots&&&&&-b_l&&&\\
&&&-b_r&&&&\\
&&&&\ddots&&&&\\
&&&&&B_J&&&\\
&&&&&&\ddots&&\\
&&&&-b_k&&&\\
&&&&&&&\ddots&&\\
&&&&&&&&B_m&&\\
&&&&&&&&&\ddots&\\
&&&&&&&&&&B_{j_n}
\end{smallmatrix}\right]\in A_0'
\end{multline*}
The cycle containing $J$ is no longer off the diagonal, so nothing will
happen to it as we move down the sequence of matrices (hence the notion
of it being inactive).  If we let $p$ be
the largest vertex in a cycle that appears off the diagonal at this stage, 
and $q$ be the 
second-largest, then
essentially the same procedure we just finished will be duplicated, only
with $p$ as the lowest row with an off-diagonal entry.  Each time we do
this, the lowest two off-diagonal entries in the array are switched, and
we toggle the diagonality (``activity'') of the cycle containing $J$ (which
will change as we go).  But the effect of this switching in the matrix is
the trading of successors for $p$ and $q$ at each stage, and thus this 
matrix process is equivalent to the graph surgery from Lemma \ref{Escher}

Now we appeal to the lemma.  The effect of this huge process is to insert
$r$ after $J$ in the cycle containing $J$.  Furthermore, since $r$ has
been removed from the path joining 1 to 0, the new happy functional digraph
has a shorter path.  Thus, by induction, the tree surgery method and
the matrix method give the same code.
\end{proof}

\chapter{The Dandelion Code}\label{dandy}

The method for this code
is sort of a m\'elange of the methods of the Happy Code and the
Blob Code.  As we did for the Blob Code, we consider the $n\times n$ submatrix 
obtained from $\hat{D}-\hat{A}$ by crossing out the zeroth row and column
and apply the Matrix Tree Theorem at
every possible opportunity.  However, following the method of the Happy Code,
we only do row operations and we always subtract the top row.  We will again
use $B$ to denote $\sum_0^n B_j$.

The matrix we start with, with rows and columns indexed from 1 to $n$, is
\[
N_0'=\begin{bmatrix}
B-b_1 & -b_2 & \dots & -b_n\\
-b_1 & B-b_2 & \dots & -b_n\\
\vdots & \vdots & \ddots & \vdots\\
-b_1 & -b_2 & \dots & B-b_n
\end{bmatrix}.
\]
We will subtract the first row from each of the other rows in turn, in the
usual way:  from the bottom up, with cancellation being a separate step.
\[
N_1=\begin{bmatrix}
B-b_1 & -b_2 & \dots & -b_n\\
-b_1 & B-b_2 & \dots & -b_n\\
\vdots & \vdots & \ddots & \vdots\\
-b_1-B+b_1 & -b_2+b_2 & \dots & B-b_n+b_n
\end{bmatrix},
\]
and we cancel only terms in the $n^{\rm th}$ row at this point.
\[
N_1'=\begin{bmatrix}
B-b_1 & -b_2 & \dots & -b_n\\
-b_1 & B-b_2 & \dots & -b_n\\
\vdots & \vdots & \ddots & \vdots\\
-B & 0 & \dots & B
\end{bmatrix}.
\]
Next we subtract the first row from the $(n-1)^{\rm th}$ row, and
we continue; at step $i$, we subtract row 1 from row $(n-i+1)$, until 
we reach the last matrix:
\[
N_{n-1}'=\begin{bmatrix}
B-b_1 & -b_2 & -b_3 &\dots & -b_n\\
-B & B & 0 & \dots & 0\\
\vdots & \vdots &\vdots & \ddots & \vdots\\
-B & 0 & 0 &\dots & B
\end{bmatrix}.
\]
It may not be clear by inspection what $\det N_{n-1}'$ is, but we already
know the answer because of the Matrix Tree Theorem.

\section{The Sets}
The sequence of sets is $F_0,F_0',D_1,D_1',F_1,\dots,F_{n-1}'$.  $F_0$
is the set of trees in the original graph; $F_0'$ is the set of arrays
from $N_0'$.  For $1\leq i\leq n-1$, 
the set $F_i$ is the set of spanning trees in an 
altered graph.  The altered graph at step $i$ has the same edges out of
$1,2,\dots,n-i$ as the original graph, and each of the 
vertices $n-i+1,\dots,n$ has multiple edges pointing to $1$ with 
certain weights, but no edges to any other vertex.  Specifically, at step $i$,
we replace the edge $n-i+1\to j$ in the graph at step $i$
by an edge $n-i+1\to 1$ with weight $B_j$, for
each $j$.  After all, if we apply the Matrix Tree Theorem to $N_1'$ (for
example), we see that row $n$ represents edges $n\to j$ and has mostly
zeroes, implying that there is no edge from $n$ to any vertex besides $1$.
If an off-diagonal entry is a sum, it
corresponds to multiple edges, each with monomial weight.  So for
$1\leq i\leq n-1$, $F_i$ is the
set of spanning trees in the altered 
graph corresponding to the matrix at step $N_i'$.
$D_i$ is the set of arrays from $N_i$ and $D_i'$ is the
set of arrays from $N_i'$.  For $1\leq i\leq n-2$,
$F_i'=D_i'$.  Finally, $F_{n-1}'$ is the set of codes.

Note that the last graph, whose spanning trees make up $F_{n-1}$,
has $n+1$ monomial-weighted edges of the form 
$k\to 1$ for each $k=2,\dots,n$, and one edge $1\to k$
for each $k=0,1,2,\dots,n$.  However, since any spanning tree rooted at $0$
must contain an edge into $0$, we know that the only possible edge out of
$1$ that can occur in a spanning tree is the edge $1\to 0$, so that $F_{n-1}$
can also be thought of as the set of spanning trees of the graph below.

\begin{picture}(100,100)
\put(53,48){\vector(0,-1){40}}
\put(50,0){0}
\put(50,50){1}
\put(35,38){\vector(1,1){15}}
\put(27,55){\vector(1,0){23}}
\put(35,75){\vector(1,-1){15}}
\put(53,84){\vector(0,-1){20}}
\put(70,75){\vector(-1,-1){15}}
\put(78,55){\vector(-1,0){23}}
\put(70,38){\vector(-1,1){15}}
\put(30,45){\vector(2,1){15}}
\put(30,65){\vector(2,-1){15}}
\put(43,80){\vector(1,-2){10}}
\put(63,80){\vector(-1,-2){10}}
\put(73,65){\vector(-2,-1){15}}
\put(73,45){\vector(-2,1){15}}
\put(42,32){\vector(1,2){9}}
\put(63,32){\vector(-1,2){9}}
\end{picture}

\vspace{5pt}\noindent
This picture should enlighten the reader as to the name for this Code.

\section{The Involutions}
The involutions are defined very similarly to the ones for the Blob Code.  

As usual, the first involution,
$\mu_0':F_0-F_0'\to F_0-F_0'$, takes each tree in the original graph to 
the corresponding array in $N_0'$, and pairs up the extra arrays according
to toggling the diagonality of the cycle containing the greatest element.

For $1\leq i\leq n-1$,
$\mu_i:D_i'-F_i\to D_i'-F_i$ is the involution of the bijective proof of the
Matrix Tree Theorem, which
matches each positive array from $N_i'$ (that is, each element
of $(D_i')^+$) to a negative tree in $-F_i$.  
Meanwhile, for $0\leq i \leq n-2$, 
$\mu_i':F_i-F_i'\to F_i-F_i'$ is essentially
the negative of map $\mu_i$; it matches trees and arrays 
in the same way but with opposite signs.

For $1\leq i\leq n-1$,
$\xi_i:F_{i-1}'-D_i\to F_{i-1}'-D_i$ is the involution that matches  
arrays to one another
according to the row operation.  Thus if $a\in -D_i$ and the entry
in row $n-i+1$ is $+b_j$ or $-B_j$, then $\xi_i(a)=a'\in -D_i$ where $a'$
is obtained by interchanging and negating rows 1 and $n-i+1$.  For all 
other $a\in F_{i-1}'-D_i$, $\xi_i(a)=-a$ (in $-D_i$ if $a\in F_{n-1}'$ and
vice versa).

For $1\leq i\leq n-1$,
$\xi_i':D_i-D_i'\to D_i-D_i'$ is the involution that matches up arrays 
according to the arithmetic within entries in row $n-i+1$.  If $a\in D_i$
and the entry in row $n-i+1$ is $\pm b_j$, then $\xi_i'(a)=a'\in D_i$
where $a'$ is obtained from $a$ by changing the sign of the entry in row
$n-i+1$; for all other $a$, $\xi_i'(a)=-a$ (in $D_i$ if $a\in -D_i'$ and
vice versa).

The final involution $\hat{\mu}_{n-1}:F_{n-1}-F_{n-1}'\to F_{n-1}-F_{n-1}'$ 
matches trees to codes.  The code for a tree is given by the weights of the
outgoing edges from vertices $2,3,\dots,n$ in order.  Thus if the weight
of the edge $i\to 1$ in the tree $\tau$ is $w_i$ for each $i=2,3,\dots n$, 
then $\hat{\mu}_{n-1}(\tau)=(w_2,w_3,\dots,w_n)$.

\section{How to Find the Dandelion Code}
\begin{theorem}
Given the sets $F_0,F_0',D_1,D_1',F_1,\dots,F_{n-1}'$ 
and the sign-reversing involutions
$\mu_0',\xi_1,\xi_1',\mu_1,\mu_1',\dots,\xi_{n-1}',\hat{\mu}_{n-1}$,
we can construct the
bijection between $F_0$ (trees in our original graph) and $D_n$ (codes).
\end{theorem}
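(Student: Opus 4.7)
The plan is to mirror almost verbatim the proofs of the Happy Code and Blob Code theorems by invoking Lemma \ref{general}. That lemma only requires two things of the sequence $F_0,F_0',D_1,D_1',F_1,\dots,F_{n-1}'$: the first and last sets must contain only positive elements, and each listed involution must act as a sign-reversing involution on the difference of two consecutive sets. Once both conditions are in hand, Lemma \ref{general} produces a constructible bijection between the end sets, which are $F_0$ (the set of spanning trees in the original graph) and $F_{n-1}'$ (the set of codes).

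First I would check the endpoint condition. $F_0$ is defined to be the set of spanning trees, which by convention come with positive sign, and $F_{n-1}'$ is the set of codes $(w_2,\ldots,w_n)$, which are simply unsigned monomials in the $b_j$'s; so both endpoint sets are entirely positive. Next I would walk down the list of involutions and verify in each case that the map is defined on the correct difference of consecutive sets and that it sends positive elements to negative elements and vice versa. For $\mu_0'$ this is just the sign-reversing involution from the bijective proof of the Matrix Tree Theorem, already recycled in both earlier chapters. For each $\xi_i$ the rule matches positive arrays in $F_{i-1}'$ with their negatives in $-D_i$, pairing up the ``leftover'' arrays of $-D_i$ by swapping-and-negating rows $1$ and $n-i+1$; swapping two rows and negating two rows each flips the sign, so the net effect is sign-reversing. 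Each $\xi_i'$ toggles the sign of the $\pm b_j$ entry in row $n-i+1$ whenever one is present, and otherwise maps between $D_i$ and $-D_i'$ by the identity-up-to-sign — again sign-reversing. The pair $\mu_i,\mu_i'$ is handled just as in the Blob Code: each is the Matrix-Tree-Theorem-style pairing of arrays with spanning trees in the intermediate graph built from $N_i'$, with the cycle-toggling rule (moving the cycle containing the largest vertex on or off the diagonal) taking care of the non-tree arrays. The final $\hat\mu_{n-1}$ is a bijection (hence trivially an involution after a harmless $\pm$) between trees in the dandelion graph and sequences $(w_2,\ldots,w_n)$; it is sign-reversing because both sides are purely positive and the map lands an element of $F_{n-1}$ in $F_{n-1}'$.

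With the hypotheses verified, the conclusion follows by a single appeal to Lemma \ref{general}: the Bread Lemma (Lemma \ref{bread}) is applied repeatedly to sandwich away the intermediate sets $F_0',D_1,D_1',F_1,\dots,F_{n-2}'$, producing a sign-reversing involution on $F_0 - F_{n-1}'$; since these two sets are purely positive, that involution must pair each tree with a code, giving the desired bijection.

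The only real content beyond bookkeeping is verifying the two $\mu_i$ involutions, since these are the places where Zeilberger/Garsia's Matrix Tree argument gets re-used on a graph whose $j$-labeled edges out of the ``rayed'' vertices $n-i+1,\dots,n$ have been collapsed into multi-edges $\bullet\to 1$. I expect that to be the main (minor) obstacle: one must confirm that the altered graph interpretation of $N_i'$ is consistent — that the upper-left $(n-i)\times(n-i)$ block really is the Matrix Tree matrix for the subgraph on $\{0,1,\ldots,n-i\}$ augmented by the weighted edges $k\to 1$ for $k>n-i$, and that the cycle-toggling rule still produces a well-defined involution there. Everything else is routine analog of the bookkeeping already done in the Happy Code and Blob Code chapters.
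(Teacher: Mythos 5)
Your proposal is correct and follows essentially the same route as the paper: the paper's own proof is a one-line appeal to Lemma \ref{general}, noting that the sets and involutions satisfy its hypotheses, exactly as you argue (your extra verification of the hypotheses, and your correct identification of the final set as $F_{n-1}'$ despite the statement's ``$D_n$,'' only make the argument more complete).
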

\begin{proof}
Again, our sets and involutions satisfy the hypotheses of Lemma
\ref{general}, so we can construct the bijection.
\end{proof}
\subsection{An example}
For $n=4$, consider the tree $1\to 3\to 4\to 2\to 0\in F_0$.  
First, the Matrix Tree Theorem tells us what array
corresponds to this tree.
\[
\mu_0'(1\to 3\to 4\to 2\to 0)=-\left[\begin{smallmatrix}
B_3 &&&\\&B_0&&\\&&B_4&\\&&&B_2\end{smallmatrix}\right]\in -F_0'.
\]
And the obligatory negative identity map:
\[
\overline{-I_{F_0'}}\left(-\left[\begin{smallmatrix}
B_3 &&&\\&B_0&&\\&&B_4&\\&&&B_2\end{smallmatrix}\right]\right)=
\left[\begin{smallmatrix}
B_3 &&&\\&B_0&&\\&&B_4&\\&&&B_2\end{smallmatrix}\right]\in F_0'.
\]
By now this is child's play.
\[
\overline{-I_{D_1}}\circ\xi_1\left(\left[\begin{smallmatrix}
B_3 &&&\\&B_0&&\\&&B_4&\\&&&B_2\end{smallmatrix}\right]\right)=
\left[\begin{smallmatrix}
B_3 &&&\\&B_0&&\\&&B_4&\\&&&B_2\end{smallmatrix}\right]\in D_1.
\]
\[
\overline{-I_{D_1'}}\circ\xi_1'\left(\left[\begin{smallmatrix}
B_3 &&&\\&B_0&&\\&&B_4&\\&&&B_2\end{smallmatrix}\right]\right)=
\left[\begin{smallmatrix}
B_3 &&&\\&B_0&&\\&&B_4&\\&&&B_2\end{smallmatrix}\right]\in D_1'.
\]
Now it gets slightly tricky.  This array does not correspond to
a tree in the graph where $4$ only has edges pointing at $1$, because
it represents the following functional digraph:

\begin{picture}(100,75)
\put(0,0){0}
\put(0,25){2}
\put(3,24){\vector(0,-1){15}}
\put(25,25){3}
\put(75,25){4}
\put(50,50){1}
\put(50,49){\vector(-1,-1){20}}
\put(30,27){\vector(1,0){40}}
\put(75,30){\vector(-1,1){20}}
\put(65,40){\footnotesize{$B_2$}}
\end{picture}

\noindent
The next step is to apply $\mu_1$ to the array; the cycle 
gets moved off the diagonal:
\[
\overline{-I_{D_1'}}\circ\mu_1\left(\left[\begin{smallmatrix}
B_3 &&&\\&B_0&&\\&&B_4&\\&&&B_2\end{smallmatrix}\right]\right)=
-\left[\begin{smallmatrix} &&-b_3&\\&B_0&&\\&&&-b_4\\-B_2&&&
\end{smallmatrix}\right]\in -D_1'
\]
\[
\overline{-I_{D_1}}\circ\xi_1'\left(-\left[\begin{smallmatrix}
&&-b_3&\\&B_0&&\\&&&-b_4\\-B_2&&&
\end{smallmatrix}\right]\right)=
-\left[\begin{smallmatrix} 
&&-b_3&\\&B_0&&\\&&&-b_4\\-B_2&&&
\end{smallmatrix}\right]\in -D_1
\]
\[
\overline{-I_{D_1}}\circ\xi_1\left(-\left[\begin{smallmatrix}
&&-b_3&\\&B_0&&\\&&&-b_4\\-B_2&&&
\end{smallmatrix}\right]\right)=
\left[\begin{smallmatrix} B_2&&&\\&B_0&&\\&&&-b_4\\&&+b_3&
\end{smallmatrix}\right]\in D_1
\]
\[
\overline{-I_{D_1}}\circ\xi_1'\left(\left[
\begin{smallmatrix} B_2&&&\\&B_0&&\\&&&-b_4\\&&+b_3&
\end{smallmatrix}\right]\right)=-\left[
\begin{smallmatrix} B_2&&&\\&B_0&&\\&&&-b_4\\&&-b_3&
\end{smallmatrix}\right]\in -D_1
\]
\[
\overline{-I_{F_0'}}\circ\xi_1\left(-\left[
\begin{smallmatrix} B_2&&&\\&B_0&&\\&&&-b_4\\&&-b_3&
\end{smallmatrix}\right]\right)=-\left[
\begin{smallmatrix} B_2&&&\\&B_0&&\\&&&-b_4\\&&-b_3&
\end{smallmatrix}\right]\in -F_0'.
\]
Since we still have a cycle, the effect of $\mu_0'$ will be to put
it back on the diagonal.
\[
\overline{-I_{F_0'}}\circ\mu_0'\left(-\left[
\begin{smallmatrix} B_2&&&\\&B_0&&\\&&&-b_4\\&&-b_3&
\end{smallmatrix}\right]\right)=\left[
\begin{smallmatrix} B_2&&&\\&B_0&&\\&&B_4&\\&&&B_3
\end{smallmatrix}\right]\in F_0'
\]
\[
\overline{-I_{D_1}}\circ\xi_1\left(-\left[
\begin{smallmatrix} B_2&&&\\&B_0&&\\&&B_4&\\&&&B_3
\end{smallmatrix}\right]\right)=\left[
\begin{smallmatrix} B_2&&&\\&B_0&&\\&&B_4&\\&&&B_3
\end{smallmatrix}\right]\in D_1
\]
\[
\overline{-I_{D_1'}}\circ\xi_1'\left(-\left[
\begin{smallmatrix} B_2&&&\\&B_0&&\\&&B_4&\\&&&B_3
\end{smallmatrix}\right]\right)=\left[
\begin{smallmatrix} B_2&&&\\&B_0&&\\&&B_4&\\&&&B_3
\end{smallmatrix}\right]\in D_1'
\]
Now $\mu_1$ will give us a tree:

\begin{picture}(300,100)
\put(50,50){$\overline{-I_{F_1}}\circ\mu_1\left(\left[
\begin{smallmatrix} B_2&&&\\&B_0&&\\&&B_4&\\&&&B_3
\end{smallmatrix}\right]\right)=$}
\put(200,75){3}
\put(205,80){\vector(1,0){20}}
\put(225,75){4}
\put(230,80){\vector(1,0){20}}
\put(250,75){1}
\put(235,85){\footnotesize{$B_3$}}
\put(253,74){\vector(0,-1){15}}
\put(250,50){2}
\put(253,49){\vector(0,-1){15}}
\put(250,25){0}
\end{picture}

\noindent
When we apply $\overline{-I_{F_1'}}\circ\mu_1'$ to this, 
we get back the same array we left in $D_1'$, only now we are in $F_1'$.  
We continue the same process.
\[
\overline{-I_{D_2}}\circ\xi_2\left(\left[
\begin{smallmatrix} B_2&&&\\&B_0&&\\&&B_4&\\&&&B_3
\end{smallmatrix}\right]\right)=\left[
\begin{smallmatrix} B_2&&&\\&B_0&&\\&&B_4&\\&&&B_3
\end{smallmatrix}\right]\in D_2;
\]
\[
\overline{-I_{D_2'}}\circ\xi_2'\left(\left[
\begin{smallmatrix} B_2&&&\\&B_0&&\\&&B_4&\\&&&B_3
\end{smallmatrix}\right]\right)=\left[
\begin{smallmatrix} B_2&&&\\&B_0&&\\&&B_4&\\&&&B_3
\end{smallmatrix}\right]\in D_2'.
\]
Once again we use the Matrix Tree Theorem, this time in the form of $\mu_2$,
to find out if we have a tree in the digraph where 3 and 4 have multiple
outgoing edges to 1.

\begin{picture}(300,100)
\put(60,50){$\overline{-I_{F_2}}\circ\mu_2\left(\left[
\begin{smallmatrix} B_2&&&\\&B_0&&\\&&B_4&\\&&&B_3\end{smallmatrix}\right]
\right)=$}
\put(250,0){0}
\put(250,25){2}
\put(253,24){\vector(0,-1){15}}
\put(250,50){1}
\put(253,49){\vector(0,-1){15}}
\put(225,50){4}
\put(230,54){\vector(1,0){20}}
\put(233,45){\footnotesize{$B_3$}}
\put(250,75){3}
\put(253,74){\vector(0,-1){15}}
\put(255,62){\footnotesize{$B_4$}}
\end{picture}

\noindent
We do, so we move on. $\mu_2'$ followed by $\overline{-I_{F_2'}}$ gives
us the same array we left behind before reaching that tree. 
\[
\overline{-I_{D_3}}\circ\xi_3\left(\left[
\begin{smallmatrix} B_2&&&\\&B_0&&\\&&B_4&\\&&&B_3
\end{smallmatrix}\right]\right)=\left[
\begin{smallmatrix} B_2&&&\\&B_0&&\\&&B_4&\\&&&B_3
\end{smallmatrix}\right]\in D_3;
\]
\[
\overline{-I_{D_3'}}\circ\xi_3'\left(\left[
\begin{smallmatrix} B_2&&&\\&B_0&&\\&&B_4&\\&&&B_3
\end{smallmatrix}\right]\right)=\left[
\begin{smallmatrix} B_2&&&\\&B_0&&\\&&B_4&\\&&&B_3
\end{smallmatrix}\right]\in D_3'.
\]
Now we run into trouble again.  This array corresponds to a graph with
a cycle between 1 and 2, so $\mu_3$ has the following effect:
\[
\overline{-I_{D_3'}}\circ\mu_3\left(\left[
\begin{smallmatrix} B_2&&&\\&B_0&&\\&&B_4&\\&&&B_3
\end{smallmatrix}\right]\right)=-\left[
\begin{smallmatrix} &-b_2&&\\-B_0&&&\\&&B_4&\\&&&B_3
\end{smallmatrix}\right]\in -D_3'.
\]
\[
\overline{-I_{D_3}}\circ\xi_3'\left(-\left[
\begin{smallmatrix} &-b_2&&\\-B_0&&&\\&&B_4&\\&&&B_3
\end{smallmatrix}\right]\right)=-\left[
\begin{smallmatrix} &-b_2&&\\-B_0&&&\\&&B_4&\\&&&B_3
\end{smallmatrix}\right]\in -D_3
\]
\[
\overline{-I_{D_3}}\circ\xi_3\left(-\left[
\begin{smallmatrix} &-b_2&&\\-B_0&&&\\&&B_4&\\&&&B_3
\end{smallmatrix}\right]\right)=\left[
\begin{smallmatrix} B_0&&&\\&+b_2&&\\&&B_4&\\&&&B_3
\end{smallmatrix}\right]\in D_3
\]
\[
\overline{-I_{D_3}}\circ\xi_3'\left(\left[
\begin{smallmatrix} B_0&&&\\&+b_2&&\\&&B_4&\\&&&B_3
\end{smallmatrix}\right]\right)=-\left[
\begin{smallmatrix} B_0&&&\\&-b_2&&\\&&B_4&\\&&&B_3
\end{smallmatrix}\right]\in -D_3
\]
This does not correspond to a tree in the graph where 3 and 4 have 
edges only to one, because there is a loop at the vertex 2.
\[
\overline{-I_{D_3}}\circ\xi_3\left(-\left[
\begin{smallmatrix} B_0&&&\\&-b_2&&\\&&B_4&\\&&&B_3
\end{smallmatrix}\right]\right)=\left[
\begin{smallmatrix} B_0&&&\\&B_2&&\\&&B_4&\\&&&B_3
\end{smallmatrix}\right]\in D_3,
\]
\[
\overline{-I_{D_3'}}\circ\xi_3'\left(\left[
\begin{smallmatrix} B_0&&&\\&B_2&&\\&&B_4&\\&&&B_3
\end{smallmatrix}\right]\right)=\left[
\begin{smallmatrix} B_0&&&\\&B_2&&\\&&B_4&\\&&&B_3
\end{smallmatrix}\right]\in D_3',
\]
and finally

\begin{picture}(300,100)
\put(0,50){$\overline{-I_{F_3}}\circ\mu_3\left(\left[
\begin{smallmatrix} B_0&&&\\&B_2&&\\&&B_4&\\&&&B_3
\end{smallmatrix}\right]\right)=$}
\put(175,0){0}
\put(175,50){1}
\put(178,49){\vector(0,-1){38}}
\put(150,50){4}
\put(155,54){\vector(1,0){20}}
\put(158,45){\footnotesize{$B_3$}}
\put(175,75){3}
\put(178,74){\vector(0,-1){15}}
\put(180,62){\footnotesize{$B_4$}}
\put(200,50){2}
\put(200,54){\vector(-1,0){20}}
\put(185,45){\footnotesize{$B_2$}}
\end{picture}

\noindent
At this point we can read the code off from the picture by looking at
the weights of the edges coming out of vertices $2,3,4$ in order.
$\overline{-I_{F_{n-1}'}}\circ\hat{\mu}_{n-1}$ of this tree is 
its dandelion code: $(B_2,B_4,B_3)\equiv(2,4,3)$.
Note that although our ending tree looks different from the original tree,
its total weight is equal to the weight of the original tree.

\section{Tree Surgery Method}
The same code can be found by skipping the matrix steps in between, since
we can predict their effect.

The plan is this:  We take the tree, and at step $i$ we remove the edge
$n-i+1\to\verb+succ+(n-i+1)$ 
and instead put in an edge $n-i+1\to 1$ with weight
$B_{\verb+succ+(n-i+1)}$.  If no cycle is created in the process, then we
move on to the next step.  If there is a cycle, we have to do something
about it:  we remove the edges $1\to\verb+succ+(1)$ and $n-i+1\to 1$ and
replace them by edges $1\to\verb+succ+(n-i+1)$ and $n-i+1\to 1$, this last edge
having weight $B_{\verb+succ+(1)}$.  At the end, we read off a version of
the na\"\i ve code from the vertices $2,\dots,n$ (instead of the successors
of each vertex (since each points at 1 now), we look at the weights of these
edges).  The algorithm takes as its input a tree as a set of edges.   

\begin{tabbing}
Tree Surgery Method for Dandelion Code\\
{\bf begin}\=\\
\>{\bf for} \= $i=1$ \= {\bf to} $n-1$ {\bf do}\\
\>\>$m\leftarrow\verb+succ+(n-i+1)$\\
\>\>$k\leftarrow\verb+succ+(1)$\\
\>\>remove edge $(n-i+1)\to m$\\
\>\>add edge $(n-i+1)\to 1$ with weight $B_m$\\
\>\>{\bf if} a cycle has been created {\bf then}\\
\>\>\>remove edge $1\to k$\\
\>\>\>remove edge $(n-i+1)\to 1$\\
\>\>\>add edge $1\to m$ \\
\>\>\>add edge $(n-i+1)\to 1$ with weight $B_k$\\
\>{\bf for} $j=2$ {\bf to} $n$ {\bf do}\\
\>\>$w_j\leftarrow$ the weight of the edge $j\to 1$\\
\>$\verb+code+\leftarrow(w_2,w_3,\dots,w_n)$\\
{\bf end.}
\end{tabbing}

In section \S\ref{joy} we will discuss 
the relationship between the Dandelion Code and
Joyal's proof of the formula for the number of labelled trees in \cite{J}
as well as the bijection in \cite{E-R}.
In fact this algorithm turns out to differ only in notation from one given by 
E{\u{g}}ecio{\u{g}}lu and Remmel in \cite{E-R}.  What is beautiful is the
fact that the matrix method and the tree surgery method result in this
same bijection.  Using our method, we can see
the underlying relationship of the tree surgical bijection 
with linear algebra and the Matrix Tree
Theorem. 

\begin{ex}

\begin{picture}(25,120)
\put(0,100){5}
\put(2,99){\vector(0,-1){15}}
\put(0,75){1}
\put(2,74){\vector(0,-1){15}}
\put(25,75){4}
\put(25,74){\vector(-1,-1){18}}
\put(0,50){2}
\put(2,49){\vector(0,-1){15}}
\put(0,25){3}
\put(2,24){\vector(0,-1){15}}
\put(0,0){0}
\end{picture}

\noindent  
The first step is to remove the edge $5\to 1$ and replace it by an
edge $5\to 1$ of weight $B_1$.  This is a bit redundant.  The point is
that whatever the successor of 5 is becomes the subscript of the weight
of the edge $5\to 1$.

\begin{picture}(50,100)
\put(0,75){5}
\put(5,80){\vector(1,0){19}}
\put(10,70){\footnotesize{$B_1$}}
\put(25,75){1}
\put(27,74){\vector(0,-1){15}}
\put(50,75){4}
\put(50,74){\vector(-1,-1){18}}
\put(25,50){2}
\put(27,49){\vector(0,-1){15}}
\put(25,25){3}
\put(27,24){\vector(0,-1){15}}
\put(25,0){0}
\end{picture}

\noindent
The next step removes the edge $4\to 2$ and replaces it by an edge $4\to 1$
with weight $B_2$.  This does not create a cycle, so this is another quick
step.

\begin{picture}(50,120)
\put(0,75){5}
\put(5,80){\vector(1,0){19}}
\put(10,70){\footnotesize{$B_1$}}
\put(25,75){1}
\put(27,74){\vector(0,-1){15}}
\put(0,100){4}
\put(5,99){\vector(1,-1){18}}
\put(10,94){\footnotesize{$B_2$}}
\put(25,50){2}
\put(27,49){\vector(0,-1){15}}
\put(25,25){3}
\put(27,24){\vector(0,-1){15}}
\put(25,0){0}
\end{picture}

\noindent
Now we remove the edge $3\to 0$ and replace it by an edge $3\to 1$ of 
weight $B_0$.

\begin{picture}(50,120)
\put(0,75){5}
\put(5,80){\vector(1,0){19}}
\put(10,70){\footnotesize{$B_1$}}
\put(25,75){1}
\put(27,74){\vector(0,-1){15}}
\put(0,100){4}
\put(5,99){\vector(1,-1){18}}
\put(10,94){\footnotesize{$B_2$}}
\put(25,50){2}
\put(27,49){\vector(0,-1){15}}
\put(25,25){3}
\put(30,30){\line(1,1){15}}
\put(45,45){\line(0,1){15}}
\put(45,60){\vector(-1,1){15}}
\put(45,50){\footnotesize{$B_0$}}
\put(25,0){0}
\end{picture}

\noindent
We have created a cycle, so we'd better fix it.  We remove the edge $1\to 2$
and replace it by the edge $1\to 0$, and replace the edge $3\to 1$ with weight
$B_0$ by an edge $3\to 1$ with weight $B_2$.  

\begin{picture}(75,70)
\put(0,25){5}
\put(5,30){\vector(1,0){19}}
\put(10,20){\footnotesize{$B_1$}}
\put(25,25){1}
\put(27,24){\vector(0,-1){15}}
\put(0,50){4}
\put(5,49){\vector(1,-1){18}}
\put(10,44){\footnotesize{$B_2$}}
\put(25,0){0}
\put(50,50){3}
\put(50,49){\vector(-1,-1){18}}
\put(33,44){\footnotesize{$B_2$}}
\put(75,50){2}
\put(75,55){\vector(-1,0){20}}
\end{picture}

\noindent
The last step is to replace the edge $2\to 3$ by an edge $2\to 1$ of 
weight $B_3$.

\begin{picture}(75,70)
\put(0,25){5}
\put(5,30){\vector(1,0){19}}
\put(10,20){\footnotesize{$B_1$}}
\put(25,25){1}
\put(27,24){\vector(0,-1){15}}
\put(0,50){4}
\put(5,49){\vector(1,-1){18}}
\put(10,44){\footnotesize{$B_2$}}
\put(25,0){0}
\put(50,50){3}
\put(50,49){\vector(-1,-1){18}}
\put(33,44){\footnotesize{$B_2$}}
\put(50,25){2}
\put(50,30){\vector(-1,0){20}}
\put(33,20){\footnotesize{$B_3$}}
\end{picture}

\noindent
Now we look at the weights.  The code is $(B_3,B_2,B_2,B_1)$.
\end{ex}

The inverse algorithm is fairly self-explanatory.  It takes a code 
$(c_1, c_2,\dots, c_n)$ and finds the corresponding tree.

\begin{tabbing}
Algorithm to go from Dandelion Code to Tree\\
{\bf begin}\=\\
\>$\verb+edges+\leftarrow\{1\to 0\}$\\
\>{\bf for} \= $i=2$ {\bf to} $n$ {\bf do}\\
\>\>add edge $i\to 1$ of weight $c_{i-1}$\\
\>{\bf for} $i=2$ {\bf to} $n$ {\bf do}\\
\>\>$k\leftarrow$ the subscript of the weight of the edge $i\to 1$\\
\>\>remove edge $i\to 1$\\
\>\>add edge $i\to k$\\
\>\>{\bf if} \= $\verb+cycles+\neq\emptyset$ {\bf then}\\
\>\>\>$m\leftarrow\verb+succ+(1)$\\
\>\>\>remove edge $i\to k$\\
\>\>\>add edge $1\to k$\\
\>\>\>remove edge $1\to m$\\
\>\>\>add edge $i\to m$\\
{\bf end.}
\end{tabbing}

\section{The Two Methods Give the Dandelion Code}
\begin{theorem}
The tree surgery method gives the same Dandelion Code as the matrix method.
\end{theorem}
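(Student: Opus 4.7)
The plan is to mimic the proof structure already used for the Blob Code and Happy Code theorems: fix $n$, and induct on the step number $i$, showing that at the end of step $i$ the matrix method and the tree surgery method produce exactly the same partial data (the same intermediate tree in the altered graph corresponding to $N_i'$, together with the same monomial weights accumulated on edges into $1$). The base case $i = 0$ is immediate: neither method has done anything yet, so both return the original tree, and by $\mu_0'$ this corresponds to the unique element of $-F_0'$ whose $i^{\rm th}$ diagonal entry is $B_{\verb+succ+(i)}$.

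For the inductive step, assume both methods agree after step $i-1$. In the matrix method, starting from the array in $F_{i-1}'$ with $B_j$'s on the diagonal reflecting the current tree, we apply $\overline{-I_{D_i}} \circ \xi_i$ and $\overline{-I_{D_i'}} \circ \xi_i'$ (which pass through without change, since the entries are capital $B_j$'s on the diagonal) and then $\overline{-I_{F_i}} \circ \mu_i$. At this point there are exactly two possibilities, parallel to the {\bf if}/{\bf then} branch of the tree surgery algorithm. If the replacement $(n-i+1) \to 1$ with weight $B_m$ (where $m = \verb+succ+(n-i+1)$) does not produce a cycle in the altered graph $D_i$, then $\mu_i$ delivers a tree directly, the partial code is extended by $B_m$ in the $(n-i+1)$-diagonal position, and we proceed to step $i+1$; this matches the ``no cycle created'' branch of tree surgery exactly.

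If a cycle is created, the matrix method must detour: $\mu_i$ yields a $-T_i'$ array whose off-diagonal entries encode a cycle through $1$, and a sequence of applications of $\xi_i'$, $\xi_i$ (with intervening negative identity maps) transports this array up to $-F_{i-1}'$, where $\mu_0'$ toggles the diagonality of the cycle containing the largest element. Because the cycle necessarily passes through vertex $1$, the largest element in it is $1$'s predecessor along the cycle, and the net effect of one full up-and-down traversal is to interchange $\verb+succ+(1)$ and $\verb+succ+(n-i+1)$ in the array (with appropriate sign handling), sending $1 \to k$ to $1 \to m$ and relabeling the weight of $(n-i+1) \to 1$ from $B_m$ to $B_k$. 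This is exactly the ``cycle fix-up'' branch of the tree surgery algorithm. One must check that only one such detour is needed, i.e.\ that the resulting array really does correspond to a tree in $D_i$ (no new cycle is introduced because $1 \to m$ now continues along the original path from $m$ to $0$, which passed through $1$ only at the start).

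The main obstacle is the bookkeeping in the detour case: one must verify, by the same row-by-row analysis used in the Blob Code theorem, that the involutions $\xi_i, \xi_i'$ transport the $-D_i'$ array without introducing further complications, and that when $\mu_0'$ acts on the resulting $-F_{i-1}'$ element, the ``largest cycle'' it toggles is precisely the newly created cycle through $1$ (not some older, unrelated cycle). This is where the specific choice ``largest element in the cycle'' matters, and where the fact that all vertices $n-i+2, \ldots, n$ now have their only outgoing edges pointing at $1$ ensures that any off-diagonal cycle in the $i^{\rm th}$ array must pass through $1$. Once this is established, the inductive step closes, and after step $n-1$ both methods produce the same tree in the altered graph; reading off the weights of the edges $j \to 1$ for $j = 2, \ldots, n$ (the action of $\hat\mu_{n-1}$) then gives the same code in both cases.
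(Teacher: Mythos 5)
Your proof follows the paper's argument in all essentials: induction on the step number, a two-case analysis (cycle created or not) when $\mu_i$ is applied, and, in the cycle case, a trace of the detour showing that its net effect is to interchange the successors of $1$ and $n-i+1$ and to give the new edge $(n-i+1)\to 1$ the weight $B_k$ with $k$ the old successor of $1$ --- exactly the fix-up branch of the tree surgery. Two slips should be corrected. The arrays produced by $\mu_i$ live in $-D_i'$, not $-T_i'$ (the $T$'s belong to the Blob Code). More importantly, the cycle-toggling involution at the top of the detour is $\mu_{i-1}'$, acting on $F_{i-1}-F_{i-1}'$; it is not $\mu_0'$, which is defined only on $F_0-F_0'$. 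Unlike the Happy Code, the Dandelion detour climbs only one level, to $F_{i-1}'$, and by the time that toggle occurs the row interchange has already removed $1$ from the cycle, so what is returned to the diagonal is the shortened cycle $m_1\to\cdots\to(n-i+1)\to m_1$; since this is the only off-diagonal cycle, the ``largest element'' convention plays no role at that point. With those corrections your outline coincides with the paper's proof.
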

\begin{proof}
Again, we assume constant $n$ and proceed by induction on step $i$.  The
base case is $i=0$.  At the start of the
zeroth step, using either method, we have a tree in this original graph.

At the end of the $i^{\rm th}$ step, which is 
the start of the $(i+1)^{\rm th}$ step, we assume that both methods have
led to the same tree in which all vertices $j\geq n-i+1$ have weighted
edges with heads at 1.

The matrices are
\[
N_{i}'=\begin{bmatrix}
B-b_1 & -b_2 & \dots & -b_{n-i} &-b_{n-i+1}& \dots & -b_n\\
-b_1&B-b_2&\dots& -b_{n-i} &-b_{n-i+1}& \dots & -b_n\\
\vdots&\vdots& \ddots & \vdots & \vdots&\dots & \vdots\\
-b_1 & -b_2 & \dots & B-b_{n-i} & -b_{n-i+1}&\dots& -b_n\\
-B & 0 & \dots & 0 &B&\dots & 0\\
\vdots &\vdots &  &\vdots& \vdots&\ddots&\vdots\\
-B & 0 &\dots & 0&0& \dots & B
\end{bmatrix},
\]
\[
N_{i+1}=\left[\begin{smallmatrix}
B-b_1 & -b_2 & \dots & -b_{n-i} &-b_{n-i+1}& \dots & -b_n\\
-b_1&B-b_2&\dots& -b_{n-i} &-b_{n-i+1}& \dots & -b_n\\
\vdots&\vdots& \ddots & \vdots & \vdots&\dots & \vdots\\
-b_1-B+b_1 & -b_2+b_2 & \dots & B-b_{n-i}+b_{n-i} & -b_{n-i+1}+b_{n-i+1}
 &\dots& -b_n+b_n\\
-B & 0 & \dots & 0 &B&\dots & 0\\
\vdots &\vdots &  &\vdots& \vdots&\ddots&\vdots\\
-B & 0 &\dots & 0&0& \dots & B
\end{smallmatrix}\right],
\]
and
\[
N_{i+1}'=\begin{bmatrix}
B-b_1 & -b_2 & \dots & -b_{n-i} &-b_{n-i+1}& \dots & -b_n\\
-b_1&B-b_2&\dots& -b_{n-i} &-b_{n-i+1}& \dots & -b_n\\
\vdots&\vdots& \ddots & \vdots & \vdots&\dots & \vdots\\
-B & 0 & \dots & B & 0 &\dots& 0\\
-B & 0 & \dots & 0 &B&\dots & 0\\
\vdots &\vdots &  &\vdots& \vdots&\ddots&\vdots\\
-B & 0 &\dots & 0&0& \dots & B
\end{bmatrix}.
\]
Let $w_j$ represent the weight of the edge $j\to 1$ 
for these vertices (remember that for each $j$, $w_j=b_r$ for some $r$), 
and let $m_k=\verb+succ+(k)$ for $1\leq k\leq n-i$.
In the matrix method, the tree is an element of
$F_{i}$.  When we apply $\overline{-I_{F_{i}'}}\circ\mu_{i+1}'$, we get
\[
\begin{bmatrix}
B_{m_1}&&&&&\\
&\ddots&&&&\\
&&B_{m_{n-i}}&&&\\
&&&w_{n-i+1}&&\\
&&&&\ddots&\\
&&&&&w_n
\end{bmatrix}\in F_i'.
\]
Now we proceed as usual for the matrix method:
\begin{multline*}
\overline{-I_{D_{i+1}}}\circ\xi_{i+1}\left(\left[\begin{smallmatrix}
B_{m_1}&&&&&\\
&\ddots&&&&\\
&&B_{m_{n-i}}&&&\\
&&&w_{n-i+1}&&\\
&&&&\ddots&\\
&&&&&w_n
\end{smallmatrix}\right]\right)=\\
\left[\begin{smallmatrix}
B_{m_1}&&&&&\\
&\ddots&&&&\\
&&B_{m_{n-i}}&&&\\
&&&w_{n-i+1}&&\\
&&&&\ddots&\\
&&&&&w_n
\end{smallmatrix}\right]\in D_{i+1}, 
\end{multline*}
and
\begin{multline*}
\overline{-I_{D_{i+1}'}}\circ\xi_{i+1}'\left(\left[\begin{smallmatrix}
B_{m_1}&&&&&\\
&\ddots&&&&\\
&&B_{m_{n-i}}&&&\\
&&&w_{n-i+1}&&\\
&&&&\ddots&\\
&&&&&w_n
\end{smallmatrix}\right]\right)=\\
\left[\begin{smallmatrix}
B_{m_1}&&&&&\\
&\ddots&&&&\\
&&B_{m_{n-i}}&&&\\
&&&w_{n-i+1}&&\\
&&&&\ddots&\\
&&&&&w_n
\end{smallmatrix}\right]\in D_{i+1}'.
\end{multline*}
Now we note that the next step depends on the status of our tree in the
new graph.  

\underline{Case 1}
Suppose that
\[
\overline{-I_{F_{i+1}}}\circ\mu_{i+1}\left(\left[\begin{smallmatrix}
B_{m_1}&&&&&\\
&\ddots&&&&\\
&&B_{m_{n-i}}&&&\\
&&&w_{n-i+1}&&\\
&&&&\ddots&\\
&&&&&w_n
\end{smallmatrix}\right]\right)
\]
is a tree in $F_{i+1}$.  We have removed the
edge $(n-i)\rightarrow m_{n-i}$, and added an edge $(n-i)\to 1$ with
weight $B_{m_{n-i}}$. We set $w_{n-i}=B_{m_{n-i}}$, and are finished
with this step.  Clearly we have the same tree we would have if we had
used the tree surgery method.

\underline{Case 2}
Suppose that 
\[
\mu_{i+1}\left(\left[\begin{smallmatrix}
B_{m_1}&&&&&\\
&\ddots&&&&\\
&&B_{m_{n-i}}&&&\\
&&&w_{n-i+1}&&\\
&&&&\ddots&\\
&&&&&w_n
\end{smallmatrix}\right]\right)
\]
is another array in $D_{i+1}'$.  The only
way for this to happen is if this array does not correspond
to a tree in the graph where all of $(n-i)$'s edges point to 1.
Since we started at a tree where all the vertices greater than $n-i$
point at 1, the only possibility is that
there is a cycle including both $(n-i)$ and 1.  None of the vertices 
$j\geq n-i+1$ can appear in this cycle since it includes only the 
vertices on the path from 1 to $(n-i)$ and these vertices all point to
1; hence, the bottom portion of the matrix
is not affected.  Thus,
\begin{multline*}
\overline{-I_{D_{i+1}'}}\circ\mu_{i+1}\left(\left[\begin{smallmatrix}
B_{m_1}&&&&&\\
&\ddots&&&&\\
&&B_{m_{n-i}}&&&\\
&&&w_{n-i+1}&&\\
&&&&\ddots&\\
&&&&&w_n
\end{smallmatrix}\right]\right)=\\
-\left[\begin{smallmatrix}
&-b_{m_1}&&&&\\
&\ddots&&&&\\
-B_{m_{n-i}}&&&&&\\
&&&w_{n-i+1}&&\\
&&&&\ddots&\\
&&&&&w_n
\end{smallmatrix}\right]\in -D_{i+1}',
\end{multline*}
with as many off-diagonal entries above row $n-i+1$
as there are vertices in the cycle
being moved off the diagonal.  These entries appear in all $(i,j)$ positions
satisfying the condition that $i\to j$ is an edge in the cycle.
\begin{multline*}
\overline{-I_{D_{i+1}}}\circ\xi_{i+1}'\left(-\left[\begin{smallmatrix}
&-b_{m_1}&&&&\\
&\ddots&&&&\\
-B_{m_{n-i}}&&&&&\\
&&&w_{n-i+1}&&\\
&&&&\ddots&\\
&&&&&w_n
\end{smallmatrix}\right]\right)=\\
-\left[\begin{smallmatrix}
&-b_{m_1}&&&&\\
&\ddots&&&&\\
-B_{m_{n-i}}&&&&&\\
&&&w_{n-i+1}&&\\
&&&&\ddots&\\
&&&&&w_n
\end{smallmatrix}\right]\in -D_{i+1},
\end{multline*}
since all entries in $N_{i+1}'$ appear also in $N_{i+1}$.  However, next 
we switch entries in rows $n-i$ and 1:
\begin{multline*}
\overline{-I_{D_{i+1}}}\circ\xi_{i+1}\left(-\left[\begin{smallmatrix}
&-b_{m_1}&&&&\\
&\ddots&&&&\\
-B_{m_{n-i}}&&&&&\\
&&&w_{n-i+1}&&\\
&&&&\ddots&\\
&&&&&w_n
\end{smallmatrix}\right]\right)=\\
\left[\begin{smallmatrix}
B_{m_{n-i}}&&&&&&\\
&\ddots&&\hdots&&&\\
&&+b_{m_1}&&&&\\
&&&&w_{n-i+1}&&\\
&&&&&\ddots&\\
&&&&&&w_n
\end{smallmatrix}\right]\in D_{i+1}.
\end{multline*}
Note that this will not take everything back to the diagonal ($b_{m_1}$
is not on the diagonal in these next few arrays). 
\begin{multline*}
\overline{-I_{D_{i+1}}}\circ\xi_{i+1}'\left(\left[\begin{smallmatrix}
B_{m_{n-i}}&&&&&&\\
&\ddots&&\hdots&&&\\
&&+b_{m_1}&&&&\\
&&&&w_{n-i+1}&&\\
&&&&&\ddots&\\
&&&&&&w_n
\end{smallmatrix}\right]\right)=\\
-\left[\begin{smallmatrix}
B_{m_{n-i}}&&&&&&\\
&\ddots&&\hdots&&&\\
&&-b_{m_1}&&&&\\
&&&&w_{n-i+1}&&\\
&&&&&\ddots&\\
&&&&&&w_n
\end{smallmatrix}\right]\in -D_{i+1}
\end{multline*}
\begin{multline*}
\overline{-I_{F_i'}}\circ\xi_{i+1}\left(-\left[\begin{smallmatrix}
B_{m_{n-i}}&&&&&&\\
&\ddots&&\hdots&&&\\
&&-b_{m_1}&&&&\\
&&&&w_{n-i+1}&&\\
&&&&&\ddots&\\
&&&&&&w_n
\end{smallmatrix}\right]\right)=\\
-\left[\begin{smallmatrix}
B_{m_{n-i}}&&&&&&\\
&\ddots&&\hdots&&&\\
&&-b_{m_1}&&&&\\
&&&&w_{n-i+1}&&\\
&&&&&\ddots&\\
&&&&&&w_n
\end{smallmatrix}\right]\in -F_i'.
\end{multline*}
Things finally get straightened out in the next step; all the off-diagonal 
entries are returned to the diagonal because there is still
only one cycle:
\begin{multline*}
\overline{-I_{F_i'}}\circ\mu_{i+1}'\left(-\left[\begin{smallmatrix}
B_{m_{n-i}}&&&&&&\\
&\ddots&&\hdots&&&\\
&&-b_{m_1}&&&&\\
&&&&w_{n-i+1}&&\\
&&&&&\ddots&\\
&&&&&&w_n
\end{smallmatrix}\right]\right)=\\
\left[\begin{smallmatrix}
B_{m_{n-i}}&&&&&\\
&\ddots&&&&\\
&&B_{m_1}&&&&\\
&&&w_{n-i+1}&&\\
&&&&\ddots&\\
&&&&&w_n
\end{smallmatrix}\right]\in F_i',
\end{multline*}
where now all entries are on the diagonal.  This holds because the
result of switching the entries in rows $n-i$ and 1 is to get rid
of the edges from those two vertices and replace them by the edges
$1\to m_{n-i}$ and $(n-i)\to m_1$.  Since there was a cycle containing
these vertices before (the cycle was $1\to m_1\to\dots\to (n-i)\to 1$,
where the last edge had weight $B_{m_{n-i}}$), what we have done is to
remove 1 from the cycle and pull the cycle out of the tree; every vertex
that was in the cycle has been removed from the path joining 1 to 0, and
1 is in the component of the graph that is still a tree.  This lone cycle
has to be returned to the diagonal.  Now we can follow the involutions
joyfully back down the sequence of matrices:
\begin{multline*}
\overline{-I_{D_{i+1}'}}\circ\xi_{i+1}'\circ\overline{-I_{D_{i+1}}}\circ\xi_{i+1}\left(
\left[\begin{smallmatrix}
B_{m_{n-i}}&&&&&\\
&\ddots&&&&\\
&&B_{m_1}&&&&\\
&&&w_{n-i+1}&&\\
&&&&\ddots&\\
&&&&&w_n
\end{smallmatrix}\right]\right)=\\
\left[\begin{smallmatrix}
B_{m_{n-i}}&&&&&\\
&\ddots&&&&\\
&&B_{m_1}&&&&\\
&&&w_{n-i+1}&&\\
&&&&\ddots&\\
&&&&&w_n
\end{smallmatrix}\right]\in D_{i+1}'.
\end{multline*}
(There are no interesting steps in between.)  When we apply 
$\mu_{i+1}$ to this matrix, we get a tree in the graph where $n-i$ has
edges only to 1.  This is simply because $n-i$ is no longer on
the path from 1 to 0.  We note that the weight of the new edge 
$(n-i)\to 1$ is $B_{m_1}$, so we set $w_{n-i}=B_{m_1}$ and are finished
with this step.
This is exactly the same as the tree surgery result.

Having accounted for all the cases, we see that at the end of step $i+1$,
the tree surgery method and the matrix method give the same weights of
edges for vertices $j\geq n-i$.  By induction, we conclude that at the
end of step $n-1$, both methods give the same weights and that consequently,
the Dandelion code found will be the same using each method.
\end{proof}

It is interesting to note that although the Dandelion matrix method
seems more closely related to the Blob matrix method than the Happy one,
the tree surgery algorithm is closer to the Happy Code.

\chapter{Permutations of the na\"\i ve code}
\section{The Happy Code: an easier method}
We have a third method for the Happy Code, that depends only on taking
the na\"\i ve code (the input is in the form 
$p=(p_1,p_2,\dots p_n)=(B_{j_1},B_{j_2},\dots,B_{j_n})$)
and permuting it according to the following algorithm:

\begin{tabbing}
Fast algorithm for Happy Code\\
{\bf begin}\=\\
\>{\bf while} \= $p_1\neq B_0$ {\bf do}\\
\>\>$a\leftarrow$ subscript of $p_1$\\
\>\>$t\leftarrow p_a$\\
\>\>$p_a\leftarrow b_a$\\
\>\>$p_1\leftarrow t$\\
\>\>$k\leftarrow n$\\
\>\>{\bf while} \= $k>a$ {\bf and} $\forall j$, $p_k\neq b_j$ {\bf do}\\
\>\>\>$k\leftarrow k-1$\\
\>\>$t\leftarrow p_a$\\
\>\>$p_a\leftarrow p_k$\\
\>\>$p_k\leftarrow t$\\ 
\>$\verb+happycode+\leftarrow$ the subscripts of $(p_2,p_3,\dots,p_n)$,
in order\\
{\bf end.}
\end{tabbing}

\noindent
For example, if we start with the tree

\vspace{15pt}
\begin{picture}(100,125)
\put(50,0){0}
\put(25,25){9}
\put(30,25){\vector(1,-1){17}}
\put(75,25){4}
\put(75,25){\vector(-1,-1){16}}
\put(25,50){3}
\put(28,48){\vector(0,-1){14}}
\put(50,50){5}
\put(55,50){\vector(1,-1){17}}
\put(100,50){2}
\put(100,50){\vector(-1,-1){16}}
\put(25,75){7}
\put(28,73){\vector(0,-1){14}}
\put(0,100){6}
\put(5,100){\vector(1,-1){17}}
\put(50,100){1}
\put(50,100){\vector(-1,-1){16}}
\put(50,125){8}
\put(53,123){\vector(0,-1){14}}
\end{picture}

\vspace{5pt}
\noindent then the procedure goes as follows:  
First we note that the na\"\i ve code is\\ 
$B_7 B_4 B_9 B_0 B_4 B_7 B_3 B_1 B_0$.

\begin{picture}(400,170)
\put(0,150){$B_7 B_4 B_9 B_0 B_4 B_7 B_3 B_1 B_0$}
\put(62,148){\vector(0,-1){40}}
\put(0,100){$B_3 B_4 B_9 B_0 B_4 B_7 b_7 B_1 B_0\longrightarrow B_9 B_4
b_3 B_0 B_4 B_7 b_7 B_1 B_0$}
\put(200,98){\vector(0,-1){40}}
\put(150,50){$B_9 B_4 b_7 B_0 B_4 B_7 b_3 B_1 B_0$}
\put(200,48){\vector(0,-1){40}}
\put(150,0){$B_0 B_4 b_7 B_0 B_4 B_7 b_3 B_1 b_9$}
\end{picture}

\vspace{5pt}
\noindent We find the code by looking at the subscripts after the initial
$B_0$: the Happy Code for the tree shown above is (4,7,0,4,7,3,1,9).

\begin{theorem}
The algorithm above gives the Happy Code as defined in previous sections.
\end{theorem}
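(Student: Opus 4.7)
The plan is to establish that the fast algorithm simulates, step for step, the tree surgery algorithm for the Happy Code presented in the previous chapter. Since tree surgery for the Happy Code has already been shown to agree with the matrix method, this will suffice. I introduce the following correspondence between array states and happy functional digraphs: the array $p = (p_1,\dots,p_n)$ is interpreted as the digraph with an edge from vertex $i$ to the subscript of $p_i$, under the convention that $p_i$ is written in lowercase precisely when vertex $i$ lies on a cycle and in uppercase otherwise. Under this convention the initial array (the na\"\i ve code, entirely uppercase) corresponds to the original tree, and the final configuration produced by tree surgery (disjoint cycles together with the tree component containing $0$ and $1$) corresponds to an array whose only uppercase entry is $p_1 = B_0$.

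I would then induct on the number of iterations of the outer \textbf{while} loop, showing that one pass through the fast algorithm has the same effect on the corresponding digraph as one pass through the \textbf{repeat} loop of the tree-surgery algorithm. Fix an iteration and set $a$ to be the subscript of $p_1$, so that $a = \verb+succ+(1)$. The first block of statements replaces $p_1$ by the old $p_a$ and sets $p_a \leftarrow b_a$: this deletes the edge $1\to a$, installs $1\to\verb+succ+(a)$, and creates a loop at $a$, exactly the opening moves of the surgical iteration. The inner \textbf{while} loop then walks $k$ downward from $n$, skipping uppercase (non-cycle) entries, and halts at the largest $k > a$ whose $p_k$ is lowercase; if no such $k$ exists it halts at $k = a$ and the subsequent swap is a no-op, so $a$ remains a loop.

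The main obstacle is to identify the index $k$ returned by the inner loop with the tree-surgery variable $J$ (the current maximum vertex lying on a cycle). The relevant observation is that in any happy functional digraph the vertices on the path from $1$ to $0$ are disjoint from those on cycles, so immediately after the first block the just-processed vertex $a$ is the only newly added cycle vertex and the remaining cycle vertices are exactly those of the previous iteration. Hence either $J < a$, in which case no cycle vertex exceeds $a$ and the inner loop returns $k = a$ (matching the ``$j \geq J$'' branch of the surgery, where $a$ itself becomes the new $J$), or $J > a$, in which case $J$ is a cycle vertex above $a$ and is the largest such, so $k = J$; the final swap then installs $\verb+succ+(a) = \verb+succ+(J)$ and $\verb+succ+(J) = a$, which is precisely the ``$j < J$'' surgical insertion. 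With this lemma in hand the induction closes: the termination condition $p_1 = B_0$ matches $\verb+succ+(1) = 0$, and reading the subscripts of $(p_2,\dots,p_n)$ reproduces the list of successors of $2,3,\dots,n$ in the final happy functional digraph, which is the Happy Code.
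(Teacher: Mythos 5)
Your proposal is correct and follows essentially the same route as the paper: both interpret the array as a functional digraph in which lowercase entries mark cycle membership, and argue that each pass of the fast algorithm performs exactly one step of the Happy Code tree surgery (the rightmost lowercase entry above $a$ playing the role of $J$). The paper's version is much terser, but your more careful identification of the inner loop's output with $J$ and the two-case analysis fill in exactly the details it leaves implicit.
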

\begin{proof}
At any stage in this algorithm, a lower-case entry indicates membership
in a cycle.  We think of the $i^{\rm th}$ entry as having $\verb+succ+(i)$
as its subscript.
All this method does at each step is to change $\verb+succ+(1)$ (the
first entry) to $\verb+succ+(\verb+succ+(1))$ and insert $\verb+succ+(1)$
after the largest vertex in a cycle, since the largest vertex will be the
furthest lower-case entry to the right.  This is exactly what tree surgery
accomplishes, so this is essentially a shorthand notation for tree surgery.
Note that this also proves that the algorithm terminates.
\end{proof}

\section{The Dandelion Code: an easier method}\label{joy}
An even faster method exists for the Dandelion Code.  The algorithm
has as its input a tree as a set of edges. It uses the previously mentioned
function $\verb+path+(x)$ which finds the path from $x$ to 0, returning a
list of vertices $(x,\verb+succ+(x),\dots,0)$.

\begin{tabbing}
Fast algorithm for Dandelion Code\\
{\bf begin}\=\\
\>$p\leftarrow\verb+path+(1)$\\
\>$m\leftarrow$ length of $p$\\
\>$p\leftarrow(p_2,\dots,p_{m-1})$\\
\>$m\leftarrow m-2$\\
\>{\bf repeat}\=\\
\>\>$a\leftarrow$ the position of the maximum element of $p$\\
\>\>$(p_1,p_2,\dots,p_a)$ becomes a cycle\\ 
\>\>$p\leftarrow(p_{a+1},\dots,p_m)$\\ 
\>{\bf until} $p=\emptyset$\\
\>rewrite the resulting collection 
of cycles as a permutation in 2-line notation\\
\>this permutation gives the new 
   \verb+succ+ function on the vertices on the path\\
\>$\verb+code+\leftarrow(\verb+succ+(2),\verb+succ+(3),\dots,\verb+succ+(n))$\\
{\bf end.}
\end{tabbing}

\begin{ex}
We begin with the following tree:

\begin{picture}(100,150)
\put(50,0){0}
\put(25,25){7}
\put(30,25){\vector(1,-1){17}}
\put(75,25){4}
\put(75,25){\vector(-1,-1){16}}
\put(25,50){3}
\put(28,48){\vector(0,-1){14}}
\put(50,50){5}
\put(55,50){\vector(1,-1){17}}
\put(100,50){2}
\put(100,50){\vector(-1,-1){16}}
\put(25,75){9}
\put(28,73){\vector(0,-1){14}}
\put(0,100){6}
\put(5,100){\vector(1,-1){17}}
\put(50,100){1}
\put(50,100){\vector(-1,-1){16}}
\put(50,125){8}
\put(53,123){\vector(0,-1){14}}
\end{picture}

\vspace{5pt}
\noindent and the procedure goes as follows:

First we note that the path from 1 to 0 is (9,3,7).  We want to 
write this as cycles according to the algorithm.  9 is the largest
thing on the path, so we end a cycle after it.  Then 3 is not the
largest remaining label on the path, so we don't end a cycle after
it, but 7 is, so we do.  Then we have to include the successors of
the other vertices:
\[
(9,3,7)\longrightarrow(9)(37)\longrightarrow\binom{379}{739}\longrightarrow
\binom{23456789}{47049319}
\]
The code is given by the bottom line: (4,7,0,4,9,3,1,9).

Another example:  If we start with the tree
\[
1\to 6\to 4\to 9\to 8\to 3\to 2\to 5\to 7\to 0,
\] 
then the procedure is as follows:
\[
(6,4,9,8,3,2,5,7)\longrightarrow(649)(8)(3257)\longrightarrow
\binom{23456789}{52974386}\longrightarrow\binom{23456789}{52974386}
\]
(Here, the path consisted of all the other vertices in the graph, so the
last 2 steps look identical.)  So the Dandelion Code 
for this tree is (5,2,9,7,4,3,8,6).
\end{ex}

At first glance it may not be clear that this algorithm is even a bijection.
However, it is.  We will need the following:
\begin{defn}
If $S$ is a set of disjoint
cycles, let $\preceq$ be the partial ordering on $S$ defined
by $C_1\preceq C_2$ if and only if the largest vertex in $C_1$ is less
than the largest vertex in $C_2$.
\end{defn}
\begin{theorem}
The fast algorithm for the Dandelion Code has as its inverse the following
algorithm:
\end{theorem}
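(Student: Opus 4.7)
The plan is to first state the inverse algorithm explicitly, then verify that running the forward fast algorithm followed by this inverse returns the original tree (and vice versa). The inverse algorithm I would present is: given the code $(c_1,\dots,c_{n-1})$, build the functional digraph $G$ with $\verb+succ+(1)=0$ and $\verb+succ+(i)=c_{i-1}$ for $i=2,\dots,n$; locate the disjoint cycles $C_1,\dots,C_k$ of $G$ and order them by \emph{decreasing} maximum element so that $\max(C_1)>\max(C_2)>\dots>\max(C_k)$; for each $C_j$ with maximum $M_j$ form the segment $s_j=(\verb+succ+(M_j),\verb+succ+^2(M_j),\dots,M_j)$ obtained by traversing the cycle from $M_j$'s successor back to $M_j$; then let $P=s_1\|s_2\|\cdots\|s_k=(p_1,\dots,p_m)$ and redefine $\verb+succ+(1)=p_1$, $\verb+succ+(p_i)=p_{i+1}$ for $1\le i<m$, and $\verb+succ+(p_m)=0$, leaving all other successors as in $G$.

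To show that the forward algorithm followed by this inverse is the identity on trees, I would start from an arbitrary tree $T$ and trace what the forward algorithm does: it excises the inner path $P=(p_1,\dots,p_m)$ from $1$ to $0$, splits $P$ into consecutive blocks $B_1,\dots,B_k$ where the endpoint of $B_j$ is the maximum of $P$ restricted to the suffix starting at $B_j$, and turns each block into a cycle by adding the edge from its maximum back to its first vertex. The resulting functional digraph agrees with $T$ off the inner path and has cycle set $\{C_1,\dots,C_k\}$ with $\max(C_j)$ equal to the final entry of $B_j$. Applying the inverse algorithm I would then check that the cycles of this functional digraph coincide with $\{C_j\}$, that ordering them by decreasing maximum recovers the block order (this is forced by the forward algorithm's rule of peeling off the largest remaining element), and that for each $C_j$ the traversal from $\verb+succ+(M_j)$ back to $M_j$ recovers the original block $B_j$ as an ordered tuple. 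Concatenation yields $P$, and the final re-chaining step rebuilds $T$ exactly.

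For the opposite direction, starting from a code $(c_1,\dots,c_{n-1})$, the inverse produces some tree $T'$; I would verify that running the forward fast algorithm on $T'$ again returns $(c_1,\dots,c_{n-1})$. The non-path successors in $T'$ agree with the code by construction; the forward algorithm, when applied to the inner path it reconstructed, takes position-of-max blocks, and the decomposition $s_1\|\cdots\|s_k$ was built precisely so that these blocks reappear and the cycles they produce coincide with the cycles of the original functional digraph. The induced permutation on path vertices then reproduces the successors specified by the code.

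The main obstacle, and really the heart of the argument, is the equivalence between two ways of recording the block structure of the inner path: the forward algorithm records the sequence of left-to-right strict maxima read off a tuple, while the inverse algorithm records the multiset of cycles together with the decreasing-max ordering. I would devote the most care to showing that these encodings are in canonical bijection: strict left-to-right maxima of $P$ read left to right appear in strictly decreasing order (each is the global max of a suffix, hence smaller than any earlier one), and conversely any family of cycles on a disjoint support can be serialized into a tuple in exactly one way so that its left-to-right block-maxima are the cycle maxima in decreasing order. Once this combinatorial lemma is in hand, the rest of the verification is bookkeeping on how $\verb+succ+$ is modified inside and outside the path.
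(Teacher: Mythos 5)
Your proposal is correct, and it supplies considerably more than the paper does: the paper states the inverse algorithm and then simply asserts that ``This is clearly the inverse of the Fast Algorithm for the Dandelion Code,'' with no further argument. Your restatement of the inverse matches the paper's algorithm exactly (order the cycles of the functional digraph by decreasing maximum under $\preceq$, rotate each cycle so its maximum comes last, concatenate, prepend $1$ and append $0$, and re-chain the successors along the resulting list), and your two-directional verification is the natural one the paper leaves implicit. The combinatorial lemma you isolate --- that the forward algorithm's block decomposition of the inner path and the decreasing-maximum serialization of a family of disjoint cycles are mutually inverse encodings --- is indeed the heart of the matter; it is a variant of Foata's fundamental bijection. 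One small correction: the block-ending elements produced by the forward algorithm are the \emph{right-to-left} (suffix) maxima of the inner path, not the left-to-right maxima --- by definition left-to-right maxima increase, whereas these block maxima decrease. Your parenthetical justification (``each is the global max of a suffix, hence smaller than any earlier one'') shows you have the correct objects in mind, so this is only a slip of terminology, not of substance.
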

\begin{tabbing}
Fast Algorithm to go from Dandelion Code to Tree\\
{\bf begin}\=\\
\>$\verb+edges+\leftarrow\{1\to 0\}$\\
\>{\bf for} \= $i=2$ {\bf to} $n$ {\bf do}\\
\>\>add edge $i\to c_{i-1}$\\
\>write cycles as permutations in cycle notation\\
\>write them in descending order according to $\preceq$\\
\>within each cycle, cyclically reorder so that the largest element appears
last\\
\>$s\leftarrow$ the permutation with the parentheses ignored, as a list\\
\>prepend 1 to $s$\\
\>append 0 to $s$\\
\>{\bf for} \= $j=1$ {\bf to} $|s|-1$ {\bf do}\\
\>\>remove edge $s_j\to\verb+succ+(s_j)$\\
\>\>add edge $s_j\to s_{j+1}$\\
{\bf end.}
\end{tabbing}

This is clearly the inverse of the Fast Algorithm for the Dandelion Code.
These algorithms, in slightly different form, were previously discovered
by E{\u{g}}ecio{\u{g}}lu and Remmel
\cite{E-R}, apparently using some version of the Involution Principle
\cite{R}.  Their bijection $\theta_{n+1}$ is isomorphic to our bijection
as follows.  Starting with a tree whose vertices are labelled 
$\{1,2,\dots,n+1\}$, we subtract from $n+1$ the
labels of all vertices besides 1 on the path from 1 to $n+1$.  Then we 
apply the fast
algorithm for the Dandelion Code, and then subtract from $n+1$
the labels of all vertices in cycles.  The result is the same 
functional digraph that E{\u{g}}ecio{\u{g}}lu and Remmel 
produced, except that we also have an edge $1\to 0$ and the
vertex $n+1$ has been relabelled with 0.

The Dandelion Code 
is reminiscent of Joyal's proof of the formula for the number of labelled
trees \cite{J}.  His argument rested on the fact that the 
number of linear orderings of a set is the same as the number of collections
of cycles from that set, and on the notion that an undirected tree, two of 
whose vertices 
are ``special,'' should correspond to a functional digraph found by taking
the linear ordering of the vertices between the two ``special vertices'' and
using the corresponding collection of disjoint cycles.  

The obvious bijection between linear orderings (of the vertices on the path
from one special vertex to the other) and collections of cycles is to consider
the linear ordering to be the second line of the 2-line notation for 
permutations, and the collection of cycles to be the permutation.  Although
this is probably what Joyal had in mind, it is somewhat unnatural in that
it usually preserves very few of the original edges in the tree.

The relationship between Joyal's proof and the Dandelion Code is that for our
purposes, the ``special'' vertices are always 1 and 0, and we are specific
about the bijection between linear orderings and collections of disjoint
cycles.  The bijection we choose (namely, the one where 1 and 0 are ignored
and the path between them is broken into cycles according to the algorithm 
above) is more natural than the obvious one 
because it preserves nearly all of the original edges of the tree.FIND

Essentially, the Dandelion Code is an implementation of Joyal's argument,
where we consider only functional digraphs where there is a loop at 0 and a
loop at 1, 1 is considered to be the largest vertex, and we use the algorithm
of the fast Dandelion Code and its inverse as the bijection between linear
orderings and collections of cycles.  In Chapter \ref{yellow} we discuss
the relationship between the Dandelion Code and the Happy Code, which means
that the Happy Code is a different implementation of Joyal's argument.

\begin{theorem}
The fast algorithm above 
gives the Dandelion Code as defined in Chapter \ref{dandy}.
\end{theorem}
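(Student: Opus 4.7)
The plan is to show that the fast algorithm and the tree-surgery procedure of Chapter \ref{dandy} produce the same code by comparing the functions $\verb+succ+^{*}$ (the subscript of the weight of the edge $j \to 1$ in the final graph, equivalently the $(j-1)$-th code entry) that each produces on the vertices $2,3,\dots,n$. Writing the path from $1$ to $0$ in the input tree as $1 = p_0 \to p_1 \to \cdots \to p_k \to p_{k+1} = 0$, I would first dispose of vertices $v$ not on this path: in the tree surgery, processing $v$ cannot create a cycle (since the chain emanating from $1$ in the intermediate graph only ever traverses unprocessed path vertices, and $v$ is not one), so $v$ is simply redirected to $1$ with weight $B_{\verb+succ+(v)}$, matching the fast algorithm, which leaves $\verb+succ+(v)$ alone. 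Thus the theorem reduces to showing that both algorithms induce the same permutation on $\{p_1,\dots,p_k\}$.

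Define $0 = a_0 < a_1 < \cdots < a_l = k$ recursively by letting $a_j$ be the index with $p_{a_j} = \max(p_{a_{j-1}+1},\dots,p_k)$; equivalently, the $p_{a_j}$ are the right-to-left maxima of $(p_1,\dots,p_k)$. The fast algorithm, by construction, outputs the cycles $(p_{a_{j-1}+1}, p_{a_{j-1}+2}, \dots, p_{a_j})$ for $j = 1,\dots,l$. I would prove the matching permutation arises from tree surgery by induction on the processing step, using the invariant: at every stage, $u := \verb+succ+(1)$ is either $0$ or an unprocessed path vertex $p_i$; every processed path vertex points directly to $1$; every unprocessed path vertex points to its tree successor; and the chain out of $1$ in the current graph is exactly the suffix $1 \to p_i \to p_{i+1} \to \cdots \to p_k \to 0$.

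Granting the invariant, processing a path vertex $p_h$ triggers a fix iff $p_h$ lies on the current chain, i.e.\ iff $h \ge i$, and the fix sends $u$ to $p_{h+1}$. Because tree surgery processes vertices in decreasing label order, after $j$ fixes we have $u = p_{a_j+1}$, and the next path vertex that can trigger a fix is the label-maximum of $\{p_{a_j+1},\dots,p_k\}$, which is $p_{a_{j+1}}$. So fixes occur at $p_{a_1},\dots,p_{a_l}$ in order. Reading off the weight assignments: the fix at $p_{a_j}$ sets the weight of $p_{a_j} \to 1$ to $B_{p_{a_{j-1}+1}}$, so $\verb+succ+^{*}(p_{a_j}) = p_{a_{j-1}+1}$; and each non-closer $p_h$ with $a_{j-1} < h < a_j$ is redirected (without fix) with weight $B_{p_{h+1}}$, so $\verb+succ+^{*}(p_h) = p_{h+1}$. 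This is exactly the fast algorithm's cycle $(p_{a_{j-1}+1},\dots,p_{a_j})$.

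The main obstacle will be to check that the invariant survives each fix, specifically that when a fix at $p_l$ sets $u$ to $p_{l+1}$ the vertex $p_{l+1}$ is still unprocessed (otherwise one would immediately have $1 \to p_{l+1} \to 1$, invalidating the clause that processed vertices point to $1$). If $p_{l+1}$ had already been processed, then $p_{l+1} > p_l$, and one of two things happened at that earlier step: either $p_{l+1}$ triggered its own fix (in which case $u$ was advanced to index $\ge l+2$ and, since $u$'s index is monotonically non-decreasing, cannot now be $\le l$), or $p_{l+1}$ did not (in which case $u$ was already at an index $> l+1$ then and remains so). Both scenarios contradict the hypothesis that $p_l$ triggers a fix now, so $p_{l+1}$ must be unprocessed and the invariant is preserved.
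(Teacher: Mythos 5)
Your proof is correct and follows essentially the same route as the paper's: both compare the fast algorithm against the tree-surgery method, dispose of off-path vertices immediately (redirecting such a vertex to $1$ can never create a cycle, so its na\"\i ve-code entry survives), and observe that the fixes in tree surgery occur precisely at the successive maxima of the remaining path, each receiving the current $\verb+succ+(1)$ as its weight --- which is exactly the fast algorithm's cycle decomposition. The paper asserts these facts informally, while you make them rigorous with the explicit invariant on the chain out of $1$ and the monotonicity of its starting index, but the underlying argument is the same.
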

\begin{proof}
We note that the tree surgery algorithm has $n-1$ steps, whereas the 
fast algorithm has an unclear number that is usually less than $n-1$.
However, in step $i$ of the tree surgery method, if vertex $n-i+1$
is not on the path from 1 to 0, then performing the tree surgery of
pointing its edge at 1 does not create a cycle.  Thus, 
$w_{n-i+1}=b_{\verb+succ+(n-i+1)}$ will be the $(n-i)^{\rm th}$ entry
in the code.  This matches the effect of the fast algorithm, which 
essentially starts with the na\"\i ve code and then changes the entries
only of vertices on the path from 1 to 0.  

For vertices that lie between 1 and 0 and do not have any inversions, 
the tree surgery algorithm notes
the cycle that has appeared and does the equivalent of reverting to the
original tree from the start of step $i$ and switching the successors
of $n-i+1$ and 1.  Then, to get rid of the smaller cycle that this graph
has, it removes the new edge $(n-i+1)\to \verb+succ+(1)$ and adds an edge
$(n-i+1)\to 1$ with weight \verb+succ+(1).  All of this amounts to exactly
what the fast algorithm does.  Because the tree surgery algorithm changes
the edges of the vertices from $n$ down to 2, the largest vertex on the
current path from 1 to 0 is the one whose edge will point initially back
at 1 and create a cycle, though in fact what will happen is that whatever
was the current \verb+succ+(1) is what gets put as the weight of the edge.
This corresponds to a cycle in the sense of permutations, and since this
cycle is removed from the path, the rest of the vertices in that cycle
now keep their original successors for the final code.  
\end{proof}

\chapter{Relationship between Codes}\label{yellow}
There is actually a close relationship between the Happy Code and the 
Dandelion Code. 

\begin{ex}
If we start with the tree $1\to 6\to 4\to 9\to 8\to 3\to 2\to 5\to 7\to 0$, 
we found in \S\ref{joy} that its Dandelion Code was (5,2,9,7,4,3,8,6).
If we reverse the order of the vertices between 1 and 0, the new
tree is
\[
1\to 7\to 5\to 2\to 3\to 8\to 9\to 4\to 6\to 0,
\]
Note that the Dandelion Code of the tree with the reversed path from 1 to 0
is {\em not} the reverse of the Dandelion Code of the original tree:
\[
(7,5,2,3,8,9,4,6)\longrightarrow(752389)(46)\longrightarrow
\binom{23456789}{38624597}\longrightarrow\binom{23456789}{38624597}
\]
However, the Happy Code for this new tree yields the Dandelion Code for the
original tree. 
The na\"\i ve code is $B_7 B_3 B_8 B_6 B_2 B_0 B_5 B_9 B_4$.  The
fast Happy Code algorithm goes as follows:

\vspace{15pt}
\begin{picture}(400,500)
\put(0,500){$B_7 B_3 B_8 B_6 B_2 B_0 B_5 B_9 B_4$}
\put(62,498){\vector(0,-1){40}}
\put(0,450){$B_5 B_3 B_8 B_6 B_2 B_0 b_7 B_9 B_4$}
\put(62,448){\vector(0,-1){40}}
\put(0,400){$B_2 B_3 B_8 B_6 b_5 B_0 b_7 B_9 B_4\longrightarrow B_2 B_3 B_8
B_6 b_7 B_0 b_5 B_9 B_4$}
\put(200,398){\vector(0,-1){40}}
\put(0,350){$B_3 b_5 B_8 B_6 b_7 B_0 b_2 B_9 B_4\longleftarrow B_3 b_2 B_8 
B_6 b_7 B_0 b_5 B_9 B_4$}
\put(62,348){\vector(0,-1){40}}
\put(0,300){$B_8 b_5 b_3 B_6 b_7 B_0 b_2 B_9 B_4\longrightarrow B_8 b_5 b_2
B_6 b_7 B_0 b_3 B_9 B_4$}
\put(200,298){\vector(0,-1){40}}
\put(150,250){$B_9 b_5 b_2 B_6 b_7 B_0 b_3 b_8 B_4$}
\put(200,248){\vector(0,-1){40}}
\put(150,200){$B_4 b_5 b_2 B_6 b_7 B_0 b_3 b_8 b_9$}
\put(200,198){\vector(0,-1){40}}
\put(0,150){$B_6 b_5 b_2 b_9 b_7 B_0 b_3 b_8 b_4\longleftarrow B_6 b_5 b_2 
b_4 b_7 B_0 b_3 b_8 b_9$}
\put(62,148){\vector(0,-1){40}}
\put(0,100){$B_0 b_5 b_2 b_9 b_7 b_6 b_3 b_8 b_4\longrightarrow B_0 b_5 b_2 
b_9 b_7 b_4 b_3 b_8 b_6$}
\end{picture}

\vspace{-90pt}
\noindent
The subscripts give the Happy Code: (5,2,9,7,4,3,8,6), which is the same
as the Dandelion Code of the tree where the path from 1 to 0 was in the
other order.
\end{ex}

\begin{theorem} If the order of the vertices on the
path from 1 to 0 is reversed, the Happy Code of the new
tree will be the same as the Dandelion Code of the original tree (and vice
versa).
\end{theorem}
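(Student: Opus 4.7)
The plan is to reduce the theorem to a statement about the cycle decomposition that each code imposes on the path from $1$ to $0$, and then to show that the Happy Code of the reversed tree and the Dandelion Code of the original tree induce the same successor permutation on that path. Write the original path as $1\to v_1\to v_2\to\cdots\to v_m\to 0$; in the reversed tree it becomes $1\to v_m\to v_{m-1}\to\cdots\to v_1\to 0$, and the two trees have identical edges elsewhere.

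First I would check that both codes leave fixed the code entries at vertices not on the path from $1$ to $0$. For the Dandelion tree surgery I would maintain the invariant that at every step the vertices reachable from $1$ form a suffix of the original path (together with $0$). Hence when an off-path vertex $v$ is processed, no cycle is created, the ``if cycle'' branch is skipped, and the final code entry at position $v$ is simply $B_{\operatorname{succ}(v)}$. For the Happy Code tree surgery, the \verb+repeat+ loop halts when $\operatorname{succ}(1)=0$ and only modifies vertices reached by chasing $\operatorname{succ}$ from $1$; off-path vertices keep their original successors. Since reversing the path does not alter off-path edges, the two codes already agree at off-path positions.

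Second, I would describe the two procedures restricted to the path vertices. The fast Dandelion algorithm recursively cuts $(v_1,\dots,v_m)$ at the position $a_j$ of the current maximum, producing blocks $B_j=(v_{a_{j-1}+1},\dots,v_{a_j})$ which are read as cycles, yielding $\operatorname{succ}(v_{a_j})=v_{a_{j-1}+1}$ and $\operatorname{succ}(v_{a_{j-1}+k})=v_{a_{j-1}+k+1}$ for $1\le k<a_j-a_{j-1}$. The Happy Code tree surgery applied to the reversed tree pulls the vertices in the order $v_m, v_{m-1},\dots, v_1$: each new left-to-right maximum becomes a loop and updates $J$, while each non-record value is spliced in immediately after the current $J$. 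The two decompositions agree for two reasons. First, the right-to-left maxima of $(v_1,\dots,v_m)$ are exactly the left-to-right maxima of $(v_m,\dots,v_1)$, namely the $v_{a_j}$'s, so the set partitions of $\{v_1,\dots,v_m\}$ into cycles coincide. Second, within the block $B_j$ the Happy Code first makes $v_{a_j}$ a loop and then splices $v_{a_j-1}, v_{a_j-2},\dots, v_{a_{j-1}+1}$ each directly behind $J=v_{a_j}$, so an easy induction on the number of splices shows that the resulting cycle is $v_{a_j}\to v_{a_{j-1}+1}\to v_{a_{j-1}+2}\to\cdots\to v_{a_j-1}\to v_{a_j}$, which is $B_j$ read in cycle notation.

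The step I expect to demand the most care is verifying that each Happy cycle, once ``closed'' by the arrival of a new left-to-right maximum, stays frozen thereafter. This uses the fact that $J$ always equals the largest vertex pulled so far, so every subsequent insertion with $j<J$ targets $J$'s cycle only; earlier, strictly smaller maxima are never again insertion targets, and their cycles are never disturbed. Once this is established, the theorem follows on the path vertices from the block- and cycle-matchings above, and on off-path vertices from the first step. The ``vice versa'' direction is the same theorem applied to the reversed tree, since path reversal is an involution on rooted trees.
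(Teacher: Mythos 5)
Your proposal is correct and follows essentially the same route as the paper's proof: off-path vertices are untouched by both surgeries, the Dandelion and Happy procedures cut the path into the same blocks because right-to-left maxima of the path are left-to-right maxima of its reversal, and the Happy Code's insert-after-the-largest rule reverses each block, recovering the Dandelion cycle. Your version simply makes the block indices, the frozen-cycle invariant, and the within-block induction explicit where the paper argues more informally.
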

\begin{proof}
To understand how the Happy Code and the Dandelion Code are so
closely related, we note that both depend on the path from 1 to 0.  Vertices
occurring elsewhere in the tree have the same effect on the code using either
algorithm; if $i$ is such a vertex then $\verb+succ+(i)$ will appear in the
$(i-1)^{\rm th}$ position of both the Dandelion Code and the Happy Code for
both the tree and its path-reversed modification.

Recalling the tree surgery method for the Happy Code, we construct a similar
method to the faster algorithm of the Dandelion Code.  First, we write out 
only the path from 1 to 0.  We know that as we move from left to right (from
1 to 0) along it, each vertex gets placed in a cycle, immediately following
the largest vertex already in a cycle.  Thus we are comparing each vertex 
with the vertices to its left in the path.  We end a cycle just {\em before}
a new largest vertex (among the labels to its left).  
This is the reverse of the fast Dandelion Code, which
ends a cycle just {\em after} the largest vertex among the labels to its right.

Meanwhile, within the cycles, each new label is inserted after the largest 
label in the cycle for the Happy Code.  
But the first element in the cycle is the largest (by 
virtue of how we have split path into cycles), and the vertices are added to
it one at a time from left to right--always inserted after this largest label.
The effect is that of reversing the path order of the remaining vertices in the
cycle.  The resulting cycle is exactly the cycle that arises from the Dandelion
Code of the path-reversed modification of the original tree.
\end{proof}

The Happy Code is another implementation of Joyal's almost-bijection.  This
time the choice of bijection between linear orderings and sets of cycles is
not as natural because it changes more of the edges of the tree.

\chapter{Conclusion}
\section{The codes are distinct}
An example suffices to prove that these codes are different from one
another and from the Pr\"ufer Code. 

\begin{ex}
Consider the tree

\vspace{15pt}
\begin{picture}(100,100)
\put(50,0){0}
\put(50,25){4}
\put(53,23){\vector(0,-1){15}}
\put(25,50){5}
\put(28,48){\vector(1,-1){20}}
\put(50,50){2}
\put(53,48){\vector(0,-1){15}}
\put(75,50){7}
\put(75,48){\vector(-1,-1){19}}
\put(25,75){6}
\put(75,75){3}
\put(28,73){\vector(1,-1){20}}
\put(75,73){\vector(-1,-1){19}}
\put(25,100){1}
\put(28,98){\vector(0,-1){15}}
\end{picture}

\noindent
whose Pr\"ufer Code we calculated in \S\ref{proofer} to be (6,2,4,2,4,4).

The Dandelion Code for the tree is found as follows:
\[
(6,2,4)\longrightarrow(6)(24)\longrightarrow\binom{246}{426}\longrightarrow
\binom{234567}{422464}
\]
So the Dandelion Code for the tree is (4,2,2,4,6,4).

The Happy Code can be found by reversing the order of the path from 1 to 0 
and finding the Dandelion Code of the altered tree:

\vspace{15pt}
\begin{picture}(100,100)
\put(50,0){0}
\put(50,25){6}
\put(53,23){\vector(0,-1){15}}
\put(50,50){2}
\put(53,48){\vector(0,-1){15}}
\put(25,75){4}
\put(30,75){\vector(1,-1){19}}
\put(75,75){3}
\put(75,75){\vector(-1,-1){19}}
\put(0,100){5}
\put(5,100){\vector(1,-1){19}}
\put(25,100){1}
\put(28,98){\vector(0,-1){15}}
\put(50,100){7}
\put(50,100){\vector(-1,-1){19}}
\end{picture}

\noindent 
This tree's Dandelion Code is (6,2,2,4,4,4):
\[
(4,2,6)\longrightarrow(426)\longrightarrow\binom{246}{624}\longrightarrow
\binom{234567}{622444}.
\]
Thus the Happy Code of our main example tree is (6,2,2,4,4,4).

The Blob Code takes a little more work:

\vspace{15pt}
\begin{picture}(100,100)
\put(50,0){0}
\put(50,25){4}
\put(53,23){\vector(0,-1){15}}
\put(25,50){5}
\put(28,48){\vector(1,-1){20}}
\put(50,50){2}
\put(53,48){\vector(0,-1){15}}
\put(25,75){6}
\put(15,75){7}
\put(25,79){\oval(30,15)}
\put(75,75){3}
\put(28,71.5){\vector(1,-1){20}}
\put(75,73){\vector(-1,-1){19}}
\put(25,100){1}
\put(28,98){\vector(0,-1){15}}
\put(75,25){Code so far = (4)}
\end{picture}

It'll take a few more steps.

\begin{picture}(100,100)
\put(50,0){0}
\put(50,25){4}
\put(53,23){\vector(0,-1){15}}
\put(50,50){2}
\put(53,48){\vector(0,-1){15}}
\put(25,50){6}
\put(15,50){7}
\put(5,50){5}
\put(20,54){\oval(40,15)}
\put(75,75){3}
\put(28,46.5){\vector(1,-1){20}}
\put(75,73){\vector(-1,-1){19}}
\put(25,75){1}
\put(28,73){\vector(0,-1){15}}
\put(75,25){Code so far = (2,4)}
\end{picture}

\vspace{15pt}
\begin{picture}(100,75)
\put(50,0){0}
\put(50,25){4}
\put(53,21.5){\vector(0,-1){12}}
\put(50,50){2}
\put(53,48){\vector(0,-1){15}}
\put(40,25){6}
\put(30,25){7}
\put(20,25){5}
\put(35,29){\oval(50,15)}
\put(75,75){3}
\put(75,73){\vector(-1,-1){19}}
\put(40,50){1}
\put(43,48){\vector(0,-1){15}}
\put(75,25){Code so far = (4,2,4)}
\end{picture}

\vspace{15pt}
\begin{picture}(100,60)
\put(50,0){0}
\put(50,25){4}
\put(53,21.5){\vector(0,-1){12}}
\put(50,50){2}
\put(53,48){\vector(0,-1){15}}
\put(40,25){6}
\put(30,25){7}
\put(20,25){5}
\put(40,29){\oval(60,15)}
\put(60,25){3}
\put(40,50){1}
\put(43,48){\vector(0,-1){15}}
\put(75,0){Code so far = (2,4,2,4)}
\end{picture}

\vspace{15pt}
\begin{picture}(100,50)
\put(50,0){0}
\put(50,25){4}
\put(53,21.5){\vector(0,-1){12}}
\put(70,25){2}
\put(40,25){6}
\put(30,25){7}
\put(20,25){5}
\put(45,29){\oval(70,15)}
\put(60,25){3}
\put(40,50){1}
\put(43,48){\vector(0,-1){15}}
\put(75,0){Code so far = (4,2,4,2,4)}
\end{picture}

\vspace{15pt}
\begin{picture}(100,50)
\put(50,0){0}
\put(50,25){4}
\put(53,21.5){\vector(0,-1){12}}
\put(70,25){2}
\put(40,25){6}
\put(30,25){7}
\put(20,25){5}
\put(50,29){\oval(80,15)}
\put(60,25){3}
\put(80,25){1}
\put(75,0){Blob Code = (6,4,2,4,2,4)}
\end{picture}

\vspace{15pt}
\noindent  This is different from the other codes.

So, to review, the tree we started with has the following codes:

\begin{center}
\begin{tabular}{c|c}
Method & Code\\
\hline
Blob & (6,4,2,4,2,4)\\
Happy & (6,2,2,4,4,4)\\
Dandelion & (4,2,2,4,6,4)\\
Pr\"ufer & (6,2,4,2,4,4)\\
\end{tabular}
\end{center}
\end{ex}

\vspace{8pt}\noindent
Thus, we conclude that the various codes are all distinct.  

\section{Clever weighting of edges}
In \cite{E-R}, E{\u{g}}ecio{\u{g}}lu and Remmel 
use a six-variable weighted version of 
Cayley's formula instead of the $(n+1)$-variable version we have been
using.  They were able to produce a bijection that counts descents and
ascents.  
Specifically, where we have given the edge $i\to j$ the weight $b_j$, 
they have given it the weight $x q^i t^j$ if the edge is a descent
and $y p^i s^j$ if it is an ascent or loop.  

It is possible to extend both their results and ours 
by clever weighting of edges.
We examine the result of weighting edges as follows:
\[
W(i\to j)=\left\{\begin{array}{ll} b_j & 
       \text{if } i\to j \text{ is not an ascent}\\
a_{ij} & \text{if } i\to j \text{ is an ascent} 
\end{array}\right.
\]
Here, loops are considered not to be ascents.

Using these weights, we show an example for $n=4$:
\[
U_0'=\left[\begin{smallmatrix} 
b_0+b_1+a_{12}+a_{13}+a_{14}-b_1 & -a_{12} & -a_{13} & -a_{14}\\
-b_1 & b_0+b_1+b_2+a_{23}+a_{24}-b_2 & -a_{23} & -a_{24}\\
-b_1 & -b_2 & b_0+b_1+b_2+b_3+a_{34}-b_3 & -a_{34}\\
-b_1 & -b_2 & -b_3 & b_0+b_1+b_2+b_3+b_4-b_4
\end{smallmatrix}\right]
\]  
First we subtract row 3 from row 4, obtaining
\[
U_1=\left[\begin{smallmatrix}
b_0+b_1+a_{12}+a_{13}+a_{14}-b_1 & -a_{12} & -a_{13} & -a_{14}\\
-b_1 & b_0+b_1+b_2+a_{23}+a_{24}-b_2 & -a_{23} & -a_{24}\\
-b_1 & -b_2 & b_0+b_1+b_2+b_3+a_{34}-b_3 & -a_{34}\\
0 & 0 & -b_0-b_1-b_2-b_3-a_{34} & b_0+b_1+b_2+b_3+a_{34}
\end{smallmatrix}\right].
\]
Next we add column 4 to column 3.
\[
U_1'=\left[\begin{smallmatrix}
b_0+b_1+a_{12}+a_{13}+a_{14}-b_1 & -a_{12} & -a_{13}-a_{14} & -a_{14}\\
-b_1 & b_0+b_1+b_2+a_{23}+a_{24}-b_2 & -a_{23}-a_{24} & -a_{24}\\
-b_1 & -b_2 & b_0+b_1+b_2+b_3+a_{34}-b_3-a_{34} & -a_{34}\\
0 & 0 & 0 & b_0+b_1+b_2+b_3+a_{34}
\end{smallmatrix}\right].
\]

The method is parallel to that of the Blob Code, only our weights are
slightly different.  We continue until we reach the final matrix, 
an upper-triangular matrix whose
$i^{\rm th}$ diagonal entry is $\sum_{k=0}^{i-1} b_k + \sum_{k=i}^n a_{i-1,k}$,
except in row 1 where the diagonal entry is $b_0$.  This yields both algebraic
and bijective proofs of a 
generalized
version of Cayley's formula, which we refer to as the UCSD formula (since
the inspiration for it came from methods of E{\u{g}}ecio{\u{g}}lu and Remmel).

The UCSD formula for the sum of the weights of all possible trees is
\[
\displaystyle{\sum_\tau W(\tau)}=
\det(U_{n-1}')=b_0\displaystyle{\prod_{i=2}^{n}\left[\sum_{k=0}^{i-1}
b_k + \sum_{j=i}^{n} a_{i-1,j}\right]}.
\]
A code is a term from this product, kept in the order of the columns from
which it came.  Specifically, the set of codes is
$\{(x_1,x_2\dots,x_n)|x_1=b_0,\text{ and for } 2\leq i\leq n$ 
$x_i=a_{i-1,j}\text{ for some } j>i-1
\text{ or }x_i=b_j\text{ for some }j\leq i-1 \}$.

The advantage of this new weighting system is that the code reveals all 
ascents and descents to and from each vertex.  The ascending edges can
be read directly from the subscripts of the $a$ weights, while the descending
indegree of any vertex $j$ is given by the number of occurrences of $b_j$.
Let $I_A(j)$ denote the ascending indegree of $j$, 
$I_D(j)$ the descending indegree,
$O_A(j)$ the ascending outdegree and $O_D(j)$ the descending outdegree.
The total indegree of $j$ is the number of occurrences of $b_j$ plus the number
of times that $j$ occurs as the second subscript of an $a$.  The descending
outdegree of a vertex $j\neq 0$ is simply $O_D(j)=1-O_A(j)$.

\begin{ex} If a tree turns out to have code $(b_0,a_{13},b_2,b_0,a_{45})$,
then we know the following:

\begin{center}
\begin{tabular}{c|c|c|c|c}
Vertex & $I_A$ & $I_D$ & $O_A$ & $O_D$\\
\hline
0 & 0 & 2 & 0 & 0\\
1 & 0 & 0 & 1 & 0\\
2 & 0 & 1 & 0 & 1\\
3 & 1 & 0 & 0 & 1\\
4 & 0 & 0 & 1 & 0\\
5 & 1 & 0 & 0 & 1
\end{tabular}
\end{center}

\noindent
From the code we can thus also conclude not only that the edges $1\to 3$ and
$4\to 5$ are in the tree, but that so is 
the edge $2\to 0$ (because 2 has descending
outdegree of 1, and 1 has indegree of zero; 2 must point at something less
than 2 but it can't be 1).  All that remains is to figure
out the edges from 3 and 5.  One must point at 2 and the other at 0 to use
up all of our indegrees.  By defining involutions as we did for the original
Blob Code, we could find it using the matrix method with the above matrices.
We can also use the inverse tree surgery algorithm from \S\ref{backwards}.  

\begin{picture}(100,50)
\put(50,0){0}
\put(12,25){1}
\put(29,25){3}
\put(53,20.5){\vector(0,-1){10}}
\put(53,28){\oval(90,15)}
\put(50,25){2}
\put(71,25){5}
\put(88,25){4}
\end{picture}

\noindent
The initial $b_0$ in the code tells us that the \verb+blob+ points at 0.
The next element in the code, $a_{13}$, indicates that when we remove 1 
from the \verb+blob+, its edge points at 3 and the \verb+blob+ stays where
it is.  

\begin{picture}(100,75)
\put(50,0){0}
\put(15,50){1}
\put(15,25){3}
\put(17,49){\vector(0,-1){16}}
\put(53,20){\vector(0,-1){10}}
\put(53,27){\oval(90,15)}
\put(40,25){2}
\put(65,25){5}
\put(90,25){4}
\end{picture}

\noindent
The next part of the code is $b_2$.  If 2 is removed from the \verb+blob+, 
then the (nonexistent) path from 2 to 0 does not 
pass through the \verb+blob+, so we remove the edge $\verb+blob+\to 0$
and add edges $\verb+blob+\to 2$ and $2\to 0$.

\begin{picture}(100,100)
\put(50,0){0}
\put(25,75){1}
\put(25,50){3}
\put(27,74){\vector(0,-1){16}}
\put(53,45){\vector(0,-1){10}}
\put(53,52){\oval(60,15)}
\put(50,25){2}
\put(53,24){\vector(0,-1){16}}
\put(50,50){5}
\put(75,50){4}
\end{picture} 

\noindent
The next weight in the code is $b_0$.  
We remove 3 from the \verb+blob+.  The path from 0 to 0 does not go
through the \verb+blob+, so we remove the edge $\verb+blob+\to 2$ and
add edges $3\to 2$ and $\verb+blob+\to 0$.

\begin{picture}(100,100)
\put(50,0){0}
\put(25,75){1}
\put(25,50){3}
\put(27,74){\vector(0,-1){16}}
\put(30,49){\vector(1,-1){18}}
\put(66,20){\vector(-1,-1){10}}
\put(78,27){\oval(30,15)}
\put(50,25){2}
\put(53,24){\vector(0,-1){16}}
\put(68,25){5}
\put(86,25){4}
\end{picture} 

\noindent
The final piece of information from the code is $a_{45}$.  This automatically
tells us what the final edge is.

\begin{picture}(100,100)
\put(50,0){0}
\put(25,75){1}
\put(25,50){3}
\put(27,74){\vector(0,-1){16}}
\put(30,49){\vector(1,-1){18}}
\put(50,25){2}
\put(53,24){\vector(0,-1){16}}
\put(75,25){5}
\put(75,25){\vector(-1,-1){18}}
\put(75,50){4}
\put(77,49){\vector(0,-1){16}}
\end{picture} 
\end{ex}

This weighted version of the Blob Code is just as easily calculated as 
the original Blob Code, but displays more information.  

We can also use the Dandelion Code to verify directly that the UCSD formula
holds.  The right side of the equation, a product of sums of monomials, 
expands out to a sum of terms of degree $n$.  Each term represents a 
happy functional digraph consisting of the edges $i\to j$ whenever
the $i^{\rm th}$ indeterminate in the sequence is $b_j$ or $a_{kj}$ for
some $k$.  The Dandelion Code gives a bijection between this set of happy
functional digraphs and the set of trees, which preserves weights.  Thus,
the right hand side of the equation must equal the left hand side.
This is essentially the same proof that E{\u{g}}ecio{\u{g}}lu and Remmel use
for their six-variable version of the Cayley formula.

Using the Dandelion Code matrix method for an algebraic proof of this formula
is less straightforward than using the Blob Code matrix method.

\section{Applications and more questions}
All of our simple (non-weighted) codes have interesting features.
The Happy Code is less natural than the other two and probably can not
be generalized to display more information than the Pr\"ufer Code.
It is the hardest of the three codes to get a mental handle on, 
because in the matrix method, we only apply the bijective proof of the
Matrix Tree Theorem 
to the original matrix.  This drastically complicates the proof that
the matrix method and tree surgery method for this code are equivalent.  
However, Lemma \ref{Escher},
required in that proof, is easy to state, beautiful, and surprising.

The Dandelion Code is very efficiently calculated, and allows us an easier
way to find the Happy Code.  It implements Joyal's almost-bijective proof 
of Cayley's formula in a beautiful and natural way.  If thought of
in the way suggested by the fast algorithm for it 
(as E{\u{g}}ecio{\u{g}}lu and Remmel did), it preserves most of the edges
of the original tree.  Furthermore, this bijection provides a direct proof
of the UCSD formula.

The simple Blob Code
is interesting in that it elaborates on some of Orlin's ideas and provides a
bijection behind his manipulatoric proof of the formula for the number of
trees.  
Furthermore, it doesn't single out vertex 1 as being more special 
than the others, whereas the other two codes require one (rather arbitrarily)
to examine the path from 1 to 0.  Best of all,
the matrix and tree surgery methods both generalize easily to a weighted
code that keeps track of all ascents and descents in the tree.  

All of these codes share the property that they are consistent with the Matrix
Tree Theorem.  They are natural in that we can undo them one step at a
time, in reverse order from the way they were found, simply by following the
involutions through in the other order.  They also can be found by simpler,
tree-surgical bijections similar to that of Pr\"ufer, yet the inverses of 
these methods are the simple inverse operations of the formations of the
codes. Meanwhile, there does not seem to be any way to ``matrixify'' the 
Pr\"ufer Code, and its inverse is decidedly unnatural.  In addition, our three
codes lose none of the information encapsulated in the Pr\"ufer Code (the
indegrees of each vertex; which vertices are leaves).

Furthermore, the Dandelion Code generalizes to forests (collections of
rooted trees) very nicely \cite{R},\cite{G}.  
Since the Matrix Tree Theorem also generalizes
to forests of $k$ rooted trees (where $k<n+1$)
using minors obtained by crossing out $k$ rows and columns,
it is possible that the bijective proof by Chaiken \cite{C} can lead
to extensions of the Blob and Happy Codes to forests as well.  However, 
this is not necessary.  We can easily extend any of the three
codes to forests of $k$ trees with roots $-1,-2,\dots,-k$ and non-root
vertices $1,2,\dots,n-k+1$
by replacing $b_0$ with $b_{-1}+b_{-2}+\dots+b_{-k}$ (and
$B_0$ with $B_{-1}+\dots+B_{-k}$) whenever they appear in the matrices. 
The result is immediate using the exact same methods, and the tree surgery
methods are not affected substantially by the change.  

The codes themselves may not be useful for much yet.  Although each of them
has some relationship with the idea of inversions, none actually count
inversions.  (An inversion occurs whenever a vertex
$j>i$ appears on the path from $i$ to 0.)
A future direction for research might be to attempt to find a
code that is both consistent with the Matrix Tree Theorem {\em and} able to
enumerate the inversions of the tree, because
the total number of inversions in a tree, $inv(\tau)$, is
of interest to algebraic combinatorists. The Hilbert series of the
space of diagonal harmonics (when restricted to t=1), 
$Hilb_n(t,q)|_{t=1}$, is conjectured to be 
$\sum q^{inv(\tau)}$ where $\tau$ ranges over all trees with vertices 
$0,\dots,n$.  Thus a statistic on one of the codes that
has the same distribution as $inv(\tau)$ might assist algebraic 
combinatorists in finding a basis for the space of diagonal harmonics.  
Unfortunately, such a code is elusive.

Another possible direction for future research is to examine the method
of the Happy Code when applied to the Blob Code's row and column operations.
Namely, if we use a placeholder $\lambda$ in the (0,0) position and only 
apply the Matrix Tree Theorem to the original matrix, setting $B_j=b_j$ 
at the end, do we get a different code?  If so, does it have any advantages
over the codes we have already found?

As noted in \S\ref{setup}, 
there are many sequences of row and column operations
that can lead to an easily calculated determinant.  
Since the matrix involution
method is quite general, any of these should give a coding algorithm for trees.
We know that not all coding algorithms correspond to matrix methods.
Naturally we are led to wonder whether
there are always simple tree surgical methods that correspond to the codes
we find through matrices. 
The true beauty of these results is that each code was defined through
row operations on the matrix before the 
corresponding tree surgical methods were discovered. 
Thus, linear algebra gave birth to bijections who grew up and became 
independent proofs in their own right.

\bibliographystyle{abbrv}
\bibliography{refs1}

\end{document}